\RequirePackage{ifthen}
\newboolean{MPA}
\setboolean{MPA}{false}

\ifthenelse {\boolean{MPA}}
{
\documentclass{svjour3}
\smartqed
\usepackage[margin=1.3in]{geometry}
} {
\documentclass[10pt,letterpaper]{article}
\usepackage[margin=1.1in]{geometry}
}

\usepackage{amsmath}
\usepackage{amssymb}
\usepackage{hyperref}

\usepackage{mathrsfs} 

\usepackage[makeroom]{cancel} 

\usepackage{enumitem} 

\newcommand{\R}{\mathbb R}
\newcommand{\Z}{\mathbb Z}
\newcommand{\Q}{\mathbb Q}

\newcommand{\dir}{v}

\newcommand{\deno}{r}

\let\st\relax
\DeclareMathOperator{\st}{s.t.}

\DeclareMathOperator{\size}{size}
\DeclareMathOperator{\width}{width}

\DeclareMathOperator{\off}{off}

\renewcommand{\P}{\mathcal P}
\newcommand{\B}{\mathcal B}
\newcommand{\C}{\mathcal C}
\renewcommand{\S}{\mathcal S}
\newcommand{\E}{\mathcal E}

\newcommand{\floor}[1]{\lfloor#1\rfloor}

\newcommand{\ceil}[1]{\lceil#1\rceil}
\newcommand{\ceilL}[1]{\left\lceil#1\right\rceil}

\newcommand{\abs}[1]{\lvert#1\rvert}
\newcommand{\absL}[1]{\left\lvert#1\right\rvert}

\newcommand{\norm}[1]{\lVert#1\rVert_2}

\newcommand{\pnorm}[1]{\lVert#1\rVert}

\newcommand{\Fnorm}[1]{\lVert#1\rVert_F}

\newcommand{\pare}[1]{\left(#1\right)}
\newcommand{\bra}[1]{\left\{#1\right\}}
\newcommand{\sbra}[1]{\left[#1\right]}

\newcommand{\transp}{\mathsf T}

\newcommand{\constlen}{2^{p(p-1)/4}}

\newcommand{\linearized}{g}

\usepackage{comment}
\specialcomment{knownproof}{\color{blue}}{\color{black}}
\excludecomment{knownproof}

\newcounter{step}

\ifthenelse {\boolean{MPA}}
{

\newenvironment{prf}[1][]
{\begin{proof}}
{\qed \end{proof}}

\newcounter{claim} 
\renewenvironment{claim}[1][]
{\refstepcounter{claim} \begin{trivlist} \item[] {\bf Claim~\theclaim}\space#1 \itshape}
{\end{trivlist}}

\journalname{Mathematical Programming A}

} {

\usepackage{amsthm}
\newtheorem{theorem}{Theorem}
\newtheorem{proposition}{Proposition}
\newtheorem{lemma}{Lemma}

\newtheorem{corollary}{Corollary}

\newtheorem{example}{Example}

\newenvironment{prf}[1][]
{\begin{proof}}
{\end{proof}}

}

\newcommand{\objS}{f^{\mathrm{S}}}
\newcommand{\objSinf}{f^{\mathrm{S}}_{\inf}}
\newcommand{\objSsup}{f^{\mathrm{S}}_{\sup}}

\newcommand{\low}{\ell}
\newcommand{\upp}{u}

\begin{document}

\title{Rational Jacobi Rotations and the Complexity of Approximating Mixed Integer Quadratic Programming}

\ifthenelse {\boolean{MPA}}
{
\titlerunning{Rational Jacobi Rotations and the Complexity of Approximating MIQP}
\authorrunning{A. Del Pia}

\author{Alberto Del Pia}
\institute{Alberto~Del~Pia \at
              Department of Industrial and Systems Engineering 
              \& Wisconsin Institute for Discovery \\
              University of Wisconsin-Madison, Madison, WI, USA \\
              \email{delpia@wisc.edu}}
}
{
\author{Alberto Del Pia
\thanks{Department of Industrial and Systems Engineering \& Wisconsin Institute for Discovery,
             University of Wisconsin-Madison, Madison, WI, USA.
             E-mail: {\tt delpia@wisc.edu}.}}
}

\date{July 31, 2026}


\maketitle

\begin{abstract}
We present an algorithm that finds an $\epsilon$-approximate solution to a mixed integer quadratic programming (MIQP) problem, and that runs on a Turing machine in time polynomial in the size of the instance and in $1/\epsilon$, provided that the number of integer variables and the number of negative eigenvalues of the Hessian of the objective function are fixed.
Unless $\mathrm{P}=\mathrm{NP}$, both restrictions are necessary, so this completes the characterization of the complexity of approximating MIQP in terms of the number of integer variables and the inertia of the Hessian; the result is new already in the purely continuous case.
The main ingredient is a polynomial-time simultaneous diagonalization algorithm: it computes a rational change of basis that maps a given ellipsoid, presented in factored form, exactly to a ball, while making the objective function separable up to an arbitrarily small perturbation and preserving the inertia of its Hessian.
The classical construction, in which the objective function is made exactly separable, requires a change of basis that is in general irrational, and cannot be carried out on a Turing machine; ours rests instead on rational Jacobi rotations, which we believe to be of independent interest.

\ifthenelse {\boolean{MPA}}
{
\keywords{Mixed integer quadratic programming \and Approximation algorithm \and
Simultaneous diagonalization \and Jacobi rotations \and Computational complexity}
\subclass{90C11 \and 90C20 \and 90C26 \and 90C60 \and 65F15}
} {}
\end{abstract}

\ifthenelse {\boolean{MPA}}
{}{
\noindent
\emph{Key words:} Mixed integer quadratic programming; Approximation algorithm;
Simultaneous diagonalization; Jacobi rotations; Computational complexity
}

\section{Introduction}

A mixed integer quadratic programming problem is an optimization problem of the form
\begin{align}
\label{prob MIQP}
\tag{MIQP}
\begin{split}
\min & \quad x^\transp H x + h^\transp x \\
\st & \quad Wx \le w \\
& \quad x \in \Z^p \times \R^{n-p}.
\end{split}
\end{align} 
In this formulation, $x \in \R^n$ is the vector of variables and the input data consists of a symmetric $H \in \Q^{n \times n}$, which we refer to as the Hessian of the objective function, $h \in \Q^n$, $W \in \Q^{m \times n}$, $w \in \Q^m$, and $p \in \{0,1,\dots,n\}$, the number of integer variables.

Already deciding whether Problem~\ref{prob MIQP} is feasible is NP-hard, since it contains integer linear feasibility as a special case.
Lenstra~\cite{Len83} showed that this can be done in polynomial time provided that $p$ is fixed, and we place ourselves in this setting throughout.

The difficulty of Problem~\ref{prob MIQP} is governed, besides $p$, by the spectrum of the matrix $H$.
The \emph{inertia} of a symmetric matrix is an ordered triple $(k_-,k_0,k_+)$, where $k_-$ is the number of negative eigenvalues, $k_0$ the number of zero eigenvalues, and $k_+$ the number of positive eigenvalues.
If $H$ is positive semidefinite, that is, if $k_-=0$, then Problem~\ref{prob MIQP} can be solved in polynomial time on a Turing machine, for fixed $p$~\cite{dP25SIOPT}.
As soon as $H$ has a single negative eigenvalue, however, the problem becomes NP-hard, already for $p=0$~\cite{ParVav91}.
In the non-convex case one can therefore only aim at approximation algorithms, and this is the object of this paper.

To state our main result, we first define the concepts of $\epsilon$-approximate solution and of size.
Consider an instance of a general optimization problem $\inf \bra{ f(x) : x \in \S}$, with $\S \neq \emptyset$.
For $\epsilon \in [0,1]$, we say that $x^\diamond \in \S$ is an \emph{$\epsilon$-approximate solution} if
\begin{equation*}
f(x^\diamond) - f_{\inf} \le \epsilon \cdot (f_{\sup} - f_{\inf}),
\end{equation*}
where $f_{\inf} := \inf_{x \in \S} f(x)$ and $f_{\sup} := \sup_{x \in \S} f(x)$.
Note that $0$-approximate solutions coincide with optimal solutions and $1$-approximate solutions coincide with feasible solutions.
For any $\epsilon > 0$, if $f_{\inf} = - \infty$, or $f_{\sup} = + \infty$, then our definition loses its value because any feasible point is an $\epsilon$-approximate solution.
This definition of approximation has some well-known invariance properties which make it a natural choice for the problems considered in this paper.
For instance, it is unchanged when the objective function is multiplied by a positive scalar or shifted by a constant, the feasible region being left untouched, and it is also unchanged when an affine transformation is applied simultaneously to the objective function and to the feasible region, like for example a change of basis.
This definition has been used in our earlier work~\cite{dP16IPCO,dP18MPB,dP23bMPA}, and in numerous other places, including~\cite{NemYud83,Vav92c,Vav92i,BelRog95,KleLauPar06}.

We define the \emph{sizes} of a rational number $p/q$ (where $p$ and $q$ are relatively prime integers), of a rational vector $v \in \Q^n$ and of a rational matrix $A \in \Q^{m \times n}$ as in \cite{SchBookIP}:
\begin{align*}
\size(p/q) & := 1+\ceil{\log(|p|+1)}+\ceil{\log(|q|+1)}, \\
\size(v) & := n + \sum_{i=1}^n \size(v_i), \\
\size(A) & := mn + \sum_{i=1}^m \sum_{j=1}^n \size(a_{ij}).
\end{align*}
Throughout the paper, $\log$ denotes the base-$2$ logarithm, and $\ln$ denotes the natural logarithm.
Given an instance of Problem~\ref{prob MIQP} (i.e., specific data $H,h,W,w$), we define its \emph{size} as $\size(H) + \size(h) + \size(W) + \size(w)$.
We are now ready to state the main result of this paper.

\begin{theorem}[Approximation algorithm for Problem~\ref{prob MIQP}]
\label{th MIQP algorithm}
Assume that the objective function of Problem~\ref{prob MIQP} is bounded below on the feasible region, and let $\epsilon\in(0,1]$ be rational.
There is an algorithm that either detects that Problem~\ref{prob MIQP} is infeasible, or finds an $\epsilon$-approximate solution to Problem~\ref{prob MIQP}.
The algorithm runs on a Turing machine in time polynomial in the size of the instance, in $\size(\epsilon)$, and in $1/\epsilon$, provided that $p$ and the number $k_-$ of negative eigenvalues of $H$ are fixed.
\end{theorem}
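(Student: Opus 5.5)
We will follow the scheme of our earlier algorithm for the case of fixed $p$ and fixed $k_-$ with bounded polytopes~\cite{dP16IPCO,dP18MPB,dP23bMPA}. We replace its one non-effective step, the irrational simultaneous diagonalization, by the rational Jacobi-rotation construction announced in the abstract.

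\textbf{Step 1: reduction to a bounded problem.} First we run Lenstra's algorithm~\cite{Len83} to decide feasibility. If the problem is feasible, we use the boundedness assumption and standard proximity and size arguments to argue as follows. When $f_{\sup}=+\infty$ every feasible point is an $\epsilon$-approximate solution, so we return one. Otherwise we may restrict to a polytope $P$ of polynomial size without changing $f_{\inf}$ or $f_{\sup}$ by more than a controlled amount. We then enumerate the integer part. Following~\cite{dP23bMPA}, we cover the projection onto the $p$ integer coordinates by a flatness/Lenstra-type subdivision. This produces a polynomial number of mixed-integer subproblems, each of which is either small (enumerable) or contains a large inscribed ellipsoid. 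Since $p$ is fixed, it suffices to handle a subproblem whose continuous relaxation is sandwiched between concentric ellipsoids $E\subseteq P\subseteq \gamma E$, with $\gamma$ polynomial in $n$. The ellipsoid $E$ is computed in factored form $E=\{c+Lz:\norm{z}\le1\}$ with rational $L$.

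\textbf{Step 2: simultaneous diagonalization.} This is the main obstacle. Substituting $x=c+Lz$ gives $x^\transp Hx = z^\transp (L^\transp H L) z+\dots$. We then need a rational orthogonal $Q$, which preserves the unit ball exactly, such that $Q^\transp L^\transp H L Q$ is diagonal up to a perturbation whose effect on the objective over the ball is at most $\delta\,(f_{\sup}-f_{\inf})$, with inertia preserved. We obtain $Q$ as a product of Jacobi rotations with rational angles. For each rotation we choose a rational Pythagorean pair $(\cos\theta,\sin\theta)=((1-t^2)/(1+t^2),2t/(1+t^2))$, with $t$ a rational approximation of the exact Jacobi tangent, so that the off-diagonal mass $\off(\cdot)$ still decreases geometrically. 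The hard part is bounding the bit growth across the $O(n^2\log(1/\delta))$ sweeps. For this, we round $t$ to a bounded denominator and control the accumulated error in Frobenius norm against the lower bound on the range $f_{\sup}-f_{\inf}$, which is at least a constant times the largest eigenvalue magnitude on the ball. Inertia preservation follows from the residual being small relative to the gap, after first isolating the zero eigenspace exactly by rational linear algebra.

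\textbf{Step 3: solve the separable approximation.} After the rotation, the objective is $\sum_i \lambda_i y_i^2+\mu^\transp y$ up to $\delta$, and only $k_-$ of the $\lambda_i$ are negative. As in~\cite{dP16IPCO,dP18MPB}, we place a grid of mesh $\approx\epsilon$ on the $k_-$ concave coordinates over the interval they span inside $\gamma E$; this gives $(1/\epsilon)^{k_-}$ points, which is polynomial. On each cell we replace the concave terms by their secant (linear) underestimators, losing at most $O(\epsilon)$ times the range because $\gamma$ is polynomial and the range dominates $|\lambda_i|$. Each resulting problem is a convex MIQP with $p$ fixed, solvable exactly in polynomial time by~\cite{dP25SIOPT}. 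Taking the best solution over all cells and all subproblems, and choosing $\delta$ and the mesh proportional to $\epsilon/\mathrm{poly}(n)$, yields an $\epsilon$-approximate solution in time polynomial in the size, $\size(\epsilon)$ and $1/\epsilon$.
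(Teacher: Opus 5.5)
Your Steps~2 and~3 follow the paper's route: exactly orthogonal rational rotations from the half-angle substitution, secant underestimators on the $k_-$ concave coordinates, and convex subproblems solved exactly. The argument nonetheless has a genuine gap at the point everything rests on, namely the claim that $f_{\sup}-f_{\inf}$ is at least a constant times the largest eigenvalue magnitude. You use this claim twice, to absorb the diagonalization residual and to make the secant error $O(\epsilon)$ times the range, yet you justify it only ``on the ball''. Here $f_{\sup}$ and $f_{\inf}$ are taken over the mixed-integer feasible set, and the sandwich $E\subseteq P\subseteq\gamma E$ says nothing about where its points lie. For example, take $p=1$, $P=[-0.4,0.4]\times[-1,1]$ and $f(x)=-Mx_1^2+\eta x_2^2$. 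The feasible set is $\{0\}\times[-1,1]$, so the range is $\eta$ however large $M$ is. Your Step~1 subdivision is presumably meant to exclude such cases, but ``contains a large inscribed ellipsoid'' is a property of the polytope. It is never quantified, and it is never connected to the range bound. What you actually need are feasible mixed-integer points spread along the top (approximate) eigen-direction, and that direction is only known after Step~2. Two such points do not suffice, since $-y_1^2$ takes the same value at $\pm\frac12e^1$; a third point, their midpoint, must also be feasible.

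The paper obtains this through a theorem of the alternative applied \emph{after} diagonalizing (Proposition~\ref{prop alternative SMIQP}). In the coordinates $y$, with $\abs{d_1}=\gamma_{\max}$, Lenstra's algorithm (Lemma~\ref{lem Lenstra width}) is run on $\B(\pm\frac34e^1,\frac14)$ with respect to the doubled lattice $\Lambda_p(2b^1,\dots,2b^n)+\{c\}$. One outcome is a pair $y^\pm$: their midpoint is feasible by Lemma~\ref{lem midpoint}, and Lemma~\ref{lem plus minus} then certifies $f_{\sup}-f_{\inf}\ge\frac3{16}\gamma_{\max}$. The other outcome is a lattice direction along which the polytope has width at most $8p\,2^{p(p-1)/4}n^{3/2}$. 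The instance then splits into polynomially many sub-instances with one fewer integer variable, which are solved recursively and merged by Lemma~\ref{lem partition}. Your a priori subdivision could only be repaired with this same quantification and the midpoint device.

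Smaller points:
\begin{itemize}
\item There is no need, and no evident way, to test whether $f_{\sup}=+\infty$. A box may also change $f_{\sup}$ arbitrarily. What makes the box harmless is that some optimal solution has polynomial size (Lemma~\ref{lem NP}), so $f_{\inf}$ is preserved and $f_{\sup}$ only decreases.
\item Isolating the zero eigenspace exactly is in general incompatible with a rational orthogonal $Q$. For instance, the kernel of the all-ones $2\times2$ matrix is spanned by $(1,-1)$ and contains no rational unit vector. The paper instead uses Cauchy's bound (Lemma~\ref{lem eigen zero gap}) and rounds tiny diagonal entries to zero (Lemma~\ref{lem noise}).
\item That same bound supplies the a priori scale $1/\zeta$ to which $\delta$ must be tied. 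This matters because $\delta$ is an absolute bound fixed before $D$ is known, so $\epsilon/\mathrm{poly}(n)$ alone is not enough.
\end{itemize}
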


The two parameters that Theorem~\ref{th MIQP algorithm} keeps fixed are precisely the two that must be fixed, unless $\mathrm{P}=\mathrm{NP}$.
If the assumption that $p$ is fixed is dropped, the algorithm would decide in polynomial time whether the feasible region of Problem~\ref{prob MIQP} is empty, which is NP-hard.
If the assumption that $k_-$ is fixed is dropped, the algorithm could be used to solve 3SAT in polynomial time, already for $p=0$~\cite{Vav92c}.
In this sense Theorem~\ref{th MIQP algorithm} completes the characterization of the complexity of approximating Problem~\ref{prob MIQP} in terms of $p$ and of the inertia of $H$.
The dependence of the running time on $1/\epsilon$ cannot be dropped either: an algorithm running in time polynomial in the size of the instance and in $\size(\epsilon)$ alone could be used to solve quadratic programming with one negative eigenvalue in polynomial time~\cite{Vav92c}, and the latter problem is NP-hard~\cite{ParVav91}.

The assumption that the objective function is bounded below on the feasible region is imposed rather than checked, and necessarily so: deciding whether it holds is NP-hard, even if $p=0$ and $H$ has only two negative eigenvalues.
This follows from Theorem~2 in~\cite{dP23bMPA}, which is stated in terms of the rank of $H$; the instances constructed in its proof have a Hessian with exactly two negative eigenvalues and one positive eigenvalue.

\medskip
\noindent
\textbf{Related work. \ }
Theorem~\ref{th MIQP algorithm} can be seen as the conclusion of a line of approximation algorithms with theoretical guarantees for quadratic programming (Problem~\ref{prob MIQP} with $p=0$) and for mixed integer quadratic programming.
For quadratic programming, Vavasis gave an approximation algorithm for the concave case ($k_+=0$) with $k_-$ fixed~\cite{Vav92c}, and then for the more general case with only $k_-$ fixed~\cite{Vav92i}.
In the mixed integer setting (with $p$ fixed), the concave case ($k_+=0$) with $k_-$ fixed was settled in~\cite{dP16IPCO,dP18MPB}, and the fixed rank case, with both $k_-$ and $k_+$ fixed, in~\cite{dP23bMPA}.
Theorem~\ref{th MIQP algorithm} subsumes all of these: it requires only that $k_-$ is fixed, and in particular it places no restriction whatsoever on the number $k_+$ of positive eigenvalues, hence none on the rank of $H$.
All the algorithms mentioned above run in polynomial time on a Turing machine, with one exception: the algorithm of~\cite{Vav92i} relies on a simultaneous diagonalization of two matrices which, as we explain below, cannot be carried out on a Turing machine, since the matrix it produces is in general irrational.
We stress that our result is thus new already for $p=0$: for quadratic programming with a fixed number of negative eigenvalues, no polynomial-time approximation algorithm in the Turing model of computation was known.

All the algorithms above, including ours, are based on the classical technique of mesh partitioning and linear underestimators, in which some of the quadratic terms of the objective function are replaced, on each piece of a partition of the feasible region, by linear underestimators; this idea has been used in optimization since at least the 1950s.
For this technique to yield an $\epsilon$-approximate solution, one first needs a change of variables that makes the objective function \emph{separable}, that is, that replaces $H$ by a diagonal matrix.
In the concave case, this change of variables is the only preprocessing that is needed, and it can be achieved on a Turing machine in polynomial time by the symmetric decomposition algorithm of Dax and Kaniel~\cite{DaxKan77}, whose running time was analyzed in~\cite{dP23bMPA}: that algorithm produces a rational nonsingular $L$ with $L^\transp H L$ diagonal, and, by Sylvester's law of inertia, the inertia is automatically preserved.

In the indefinite case the situation is considerably more delicate, because a separable objective function is no longer enough: one also needs the polyhedron described by the linear constraints, which we may assume to be bounded, to be of \emph{spherical form}, that is, sandwiched between two concentric balls whose radii differ by a controlled factor.
The reason is that the analysis needs to know that the polytope is neither too large nor too small in any direction, and this is exactly what the two balls guarantee.
If the rank of $H$ is fixed, which is the setting of~\cite{dP23bMPA}, separability and spherical form are only needed on a subspace of fixed dimension, and the decomposition of Dax and Kaniel essentially suffices.
If the rank of $H$ is not fixed, as in~\cite{Vav92i} and in the present paper, this simplification is not available, and both properties are needed in full.
The two requirements pull in opposite directions: a general nonsingular change of variables can diagonalize $H$, but it distorts the feasible region and destroys sphericity, whereas an orthogonal change of variables preserves balls but is, in general, irrational.

\medskip
\noindent
\textbf{The main obstacle. \ }
In~\cite{Vav92i} both requirements are met at once by constructing, given the symmetric matrix $H$ and a symmetric positive definite matrix $M$ describing an ellipsoid that approximates the feasible region, a nonsingular matrix $L$ such that
\begin{align}
\label{eq classical simultaneous}
L^\transp H L \text{ is diagonal} \qquad \text{and} \qquad L^\transp M L = I,
\end{align}
where $I$ denotes the identity matrix; the second condition is what turns the ellipsoid into a ball.
Such a matrix $L$ always exists over the reals, and is classically obtained from a Cholesky factorization $M = C^\transp C$ followed by a spectral decomposition (see, e.g., Algorithm~8.7.1 in~\cite{GolVan4th}), but both steps are in general irrational, and therefore this construction cannot be carried out on a Turing machine.
Throughout, we assume that $M$ is given already in factored form, that is, that a rational nonsingular $C$ with $M = C^\transp C$ is part of the input; this is how $M$ arises in our application, where $C$ is produced by Lenstra's ellipsoid rounding algorithm, and it is a genuine restriction, since such a $C$ need not exist (Example~\ref{ex M not factored}).
The irrationality of the spectral decomposition, however, remains, and is already apparent in dimension two.

\begin{example}
\label{ex no rational L}
Let $n := 2$, let $M := I$, which is in factored form with $C := I$, and let $H := \begin{bmatrix} 0 & 1 \\ 1 & 1 \end{bmatrix}$.
Suppose that $L$ satisfies \eqref{eq classical simultaneous}.
Since $M = I$, the second condition reads $L^\transp L = I$, so $L$ is orthogonal and $L^\transp H L = L^{-1} H L$ is similar to $H$; being diagonal, it has the eigenvalues $\frac{1 \pm \sqrt5}{2}$ of $H$ on its diagonal, and these are irrational.
Hence $L^\transp H L$, and therefore $L$, cannot be rational, even though $M$ is given in factored form.
\end{example}

The obstruction is not an artifact of this small example: the columns of $L$ are generalized eigenvectors of $H$ with respect to $M$, that is, nonzero vectors $v$ with $Hv = \lambda Mv$ for some $\lambda \in \R$, so computing them exactly amounts to computing eigenvalues exactly, which for $n \ge 5$ is not in general possible by radicals.

\medskip
\noindent
\textbf{Our approach. \ }
Our approach keeps the second condition in \eqref{eq classical simultaneous} and relaxes the first.
We show (Theorem~\ref{th two matrices}) that a rational nonsingular $L$ satisfying $L^\transp M L = I$ \emph{exactly}, and for which $L^\transp H L$ is diagonal \emph{up to a perturbation $E$ of prescribed Frobenius norm at most $\delta$}, can be computed in polynomial time; moreover the diagonal part $D$ has the same inertia as $H$, which is what allows the number $k_-$ of negative eigenvalues to be tracked through the reduction.
Relaxing exact diagonality is precisely what makes $L$ rational.
The transformed problem is obtained by applying the change of variables $x = Ly$ (up to a translation), which turns the quadratic part of the objective function into $y^\transp (D+E) y$, and then discarding $E$ so as to be left with a separable objective function.
The price to pay is that this problem is no longer equivalent to the original one, since discarding $E$ perturbs its objective function.
We show that this is harmless, in the sense that an approximate solution to the transformed problem yields an approximate solution to Problem~\ref{prob MIQP}, provided that the objective function varies enough on the feasible region.
The transformed problem is in spherical form, so mesh partitioning and linear underestimators apply, and the resulting subproblems are mixed integer convex quadratic programs, which we solve with the polynomial-time algorithm of~\cite{dP25SIOPT}.
A theorem of the alternative, based on a flatness theorem for mixed integer lattices, removes the proviso: it either produces an approximate solution, certifying that the objective function does vary enough, or splits the instance into sub-instances with one fewer integer variable, on which we recurse.


The engine behind Theorem~\ref{th two matrices} is a rational analogue of the classical Jacobi eigenvalue algorithm, which we develop in Section~\ref{sec Jacobi} and which we believe is of independent interest.
Its goal is Theorem~\ref{th rational Jacobi near diag}: given a symmetric matrix $H$ and a tolerance $\delta \in (0,1]$, compute in polynomial time a rational \emph{orthogonal} change of basis after which the off-diagonal part of $H$ has Frobenius norm at most $\delta$.
A Jacobi rotation acts on a symmetric matrix by an orthogonal similarity that annihilates one off-diagonal entry; iterating on the largest off-diagonal entry drives the matrix towards diagonal form at a geometric rate.
The rotation that annihilates a given entry involves square roots and is in general irrational, so the classical algorithm lives in the real RAM model.
We observe that the rational points are dense on the unit circle, via the Weierstrass substitution
$$
c = \frac{1-u^2}{1+u^2}, \qquad s = \frac{2u}{1+u^2},
$$
and we use it to construct, in polynomial time, \emph{rational} Jacobi rotations: matrices that are exactly orthogonal, have rational entries of controlled size, and annihilate the chosen off-diagonal entry not exactly but up to any prescribed tolerance.
Exact orthogonality is what makes this work, and it is what distinguishes our rotations from simply rounding the entries of a real one: it keeps the Frobenius norm constant along the iterations, so that annihilating an off-diagonal entry decreases the off-diagonal norm, and it makes the accumulated change of basis orthogonal.

Two further ingredients turn near-diagonalization into diagonalization with a guarantee on the inertia.
First, a classical bound of Cauchy on the roots of a polynomial shows that the nonzero eigenvalues of an integer symmetric matrix are bounded away from zero by a quantity that depends only on the size of the matrix.
Second, a perturbation argument then shows that, once the off-diagonal part has been driven below that threshold, the small entries can safely be rounded to zero without changing the inertia.
Together with the rational Jacobi rotations, this yields Theorem~\ref{th diagonalize}, which, by a rational orthogonal change of basis, makes $H$ diagonal up to a perturbation of Frobenius norm at most $\delta$, and guarantees that the diagonal part has the same inertia as $H$.
Applying it to $C^{-\transp} H C^{-1}$, and composing the two changes of basis, yields Theorem~\ref{th two matrices}.
Finally, if in place of the matrix $M$ we are given a full-dimensional rational polytope, and $C$ is obtained by applying the ellipsoid rounding algorithm of Lenstra~\cite{Len83} to it, we obtain Theorem~\ref{th simultaneous diagonalization}, where the change of basis in addition maps the polytope to one sandwiched between two concentric balls, as required for spherical form.

\medskip
\noindent
\textbf{Organization of the paper. \ }
Section~\ref{sec Jacobi} develops exact and rational Jacobi rotations, and culminates in the near-diagonalization algorithm of Theorem~\ref{th rational Jacobi near diag}.
Section~\ref{sec diagonalization} turns it into the inertia-preserving diagonalization algorithm of Theorem~\ref{th diagonalize}, and then into the simultaneous diagonalization algorithms of Theorems~\ref{th two matrices} and~\ref{th simultaneous diagonalization}.
Section~\ref{sec MIQP} introduces spherical form MIQP, shows how to reduce Problem~\ref{prob MIQP} to it, solves it by mesh partitioning and linear underestimators, proves two theorems of the alternative, and assembles the proof of Theorem~\ref{th MIQP algorithm}.












\section{Jacobi rotations}
\label{sec Jacobi}

We say that $(c, s) \in \R^2$ is a \emph{cosine-sine pair} associated with some angle $\theta \in \R$ if $c = \cos(\theta)$ and $s = \sin(\theta)$.
Note that $(c, s)$ is a cosine-sine pair if and only if $c^2+s^2=1$.
Throughout this section, $n$ denotes an integer with $n \ge 2$.
Given an index pair $(p,q)$ that satisfies $1 \le p < q \le n$ and a cosine-sine pair $(c, s)$ associated with some angle $\theta$, a \emph{Jacobi rotation}, denoted by $J(p,q; c,s)$, is an $n \times n$ orthogonal matrix that differs from the $n \times n$ identity matrix only in the $2 \times 2$ principal submatrix corresponding to row/columns $p,q$, which has the form
$$
\begin{bmatrix}
c & s \\
-s & c
\end{bmatrix}.
$$
Jacobi rotations are also known as \emph{Givens rotations.}
Multiplying a vector in $\R^n$ on the left by $J(p, q; c,s)$ rotates the vector through $\theta$ radians clockwise in the $(p,q)$ plane.

Before proceeding, we first recall some standard matrix norms. Given a matrix $A \in \R^{m \times n}$, its \emph{spectral norm} and \emph{Frobenius norm} are defined by
\begin{align*} 
\norm{A} := \sup_{x \neq 0} \frac{\norm{Ax}}{\norm{x}}, \qquad
\Fnorm{A} := \sqrt{\sum_{i=1}^m \sum_{j=1}^n a_{ij}^2}.
\end{align*}
It is well known that $\norm{A} \le \Fnorm{A}$ for every $A \in \R^{m\times n}$.
Furthermore, viewing a vector $v \in \R^n$ as an $n \times 1$ matrix, its spectral norm $\norm{v}$ coincides with its Euclidean norm.
For a matrix $A \in \R^{n\times n}$ we use the notation
$$
\off(A):=\sqrt{\sum_{i=1}^n \sum_{\substack{j=1 \\ j \neq i}}^n a_{i j}^2}
$$
for the off-diagonal analogue of the Frobenius norm.
Note that $\off(A)\le\Fnorm{A}$ for every $A\in\R^{n\times n}$, since $\Fnorm{A}^2=\off(A)^2+\sum_{i=1}^n a_{ii}^2$.

\subsection{The effect of a single rotation}
\label{sec Jacobi norm}

Both the exact rotations of Section~\ref{sec Jacobi exact} and the rational ones of Section~\ref{sec rational Jacobi} act on $\off(\cdot)$ through the same identity, which we record here once and for all.
It holds for an arbitrary cosine-sine pair: in the exact case the rotation annihilates the $(p,q)$ entry of the transformed matrix, while in the rational case that entry is merely small.
In the former case the identity is well known.

Note that, given a symmetric matrix $A \in \R^{n\times n}$, an index pair $(p,q)$ with $1\le p<q\le n$, a cosine-sine pair $(c,s)$, and $J:=J(p,q;c,s)$, the entries in rows and columns $p,q$ of $B:=J^\transp A J$ satisfy
\begin{align}
\label{eq pq}
\begin{bmatrix}
b_{p p} & b_{p q} \\
b_{q p} & b_{q q}
\end{bmatrix}
=
\begin{bmatrix}
c & s \\
-s & c
\end{bmatrix}^\transp
\begin{bmatrix}
a_{p p} & a_{p q} \\
a_{q p} & a_{q q}
\end{bmatrix}
\begin{bmatrix}
c & s \\
-s & c
\end{bmatrix}.
\end{align}

\begin{lemma}
\label{lem Jacobi norm}
Let $A \in \R^{n \times n}$ be symmetric and let $(p, q)$ be an index pair that satisfies $1 \le p<q \le n$.
Let $(c,s)$ be a cosine-sine pair, let $J:=J(p,q;c,s)$, and let $B:=J^\transp A J$.
Then
\begin{align*}
\off(B)^2 = \off(A)^2-2 a_{p q}^2 + 2 b_{p q}^2.
\end{align*}
\end{lemma}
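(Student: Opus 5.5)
The plan is to combine two invariance facts: orthogonal similarity preserves the Frobenius norm, and the $2\times 2$ block in \eqref{eq pq} is itself an orthogonal similarity of the $2\times 2$ principal submatrix. The identity then comes from tracking which entries of $A$ change under $J$ and how the sum of squares of the changed entries splits between diagonal and off-diagonal positions.

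First, since $J$ is orthogonal, $\Fnorm{B}=\Fnorm{A}$, because the Frobenius norm is invariant under multiplication by orthogonal matrices on either side. Writing $\Fnorm{\cdot}^2=\off(\cdot)^2+\sum_i(\cdot)_{ii}^2$ for both matrices gives
$$\off(B)^2=\off(A)^2+\sum_{i=1}^n a_{ii}^2-\sum_{i=1}^n b_{ii}^2 .$$
Second, $J$ differs from the identity only in rows and columns $p,q$. Hence $AJ$ differs from $A$ only in columns $p,q$, and $J^\transp(AJ)$ modifies only rows $p,q$ of that. In particular $b_{ii}=a_{ii}$ for every $i\notin\{p,q\}$, and the right-hand side reduces to
$$\off(A)^2+a_{pp}^2+a_{qq}^2-b_{pp}^2-b_{qq}^2 .$$

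Third, I apply the same Frobenius invariance to the $2\times 2$ identity \eqref{eq pq}. The rotation $\begin{bmatrix} c & s\\ -s & c\end{bmatrix}$ is orthogonal because $c^2+s^2=1$, so
$$a_{pp}^2+a_{qq}^2+2a_{pq}^2=b_{pp}^2+b_{qq}^2+2b_{pq}^2 .$$
Here I use the symmetry of $A$, and of $B=J^\transp AJ$, to write $a_{qp}=a_{pq}$ and $b_{qp}=b_{pq}$. Substituting $a_{pp}^2+a_{qq}^2-b_{pp}^2-b_{qq}^2=2b_{pq}^2-2a_{pq}^2$ into the previous display gives exactly $\off(B)^2=\off(A)^2-2a_{pq}^2+2b_{pq}^2$.

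No step is a real obstacle. The only point needing care is confirming that the diagonal entries outside $\{p,q\}$ are unchanged, which follows from the block structure of $J$. This argument does not need the more detailed fact that the entries in rows $p,q$ outside the block merely mix among themselves; that fact is absorbed by the global Frobenius invariance.
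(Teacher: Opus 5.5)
Your proposal is correct and follows essentially the same route as the paper. The paper's proof combines the same three ingredients: global Frobenius invariance under $J$, Frobenius invariance of the $2\times 2$ block in \eqref{eq pq}, and the fact that $b_{ii}=a_{ii}$ for $i\neq p,q$.
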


\begin{prf}
Since $A$ is symmetric, so is $B=J^\transp A J$; and since $J$ is orthogonal and the Frobenius norm is preserved by orthogonal transformations, $\Fnorm{B}^2=\Fnorm{A}^2$.
Since also the $2\times2$ block of $J$ given by $\begin{bmatrix}c&s\\-s&c\end{bmatrix}$ is orthogonal, from \eqref{eq pq} we obtain
$$
a_{p p}^2+a_{q q}^2+2 a_{p q}^2=b_{p p}^2+b_{q q}^2+2 b_{p q}^2.
$$
Since $B$ agrees with $A$ except in rows and columns $p$ and $q$ (so that $b_{ii}=a_{ii}$ for every $i\neq p,q$), we have
$$
\sum_{i=1}^n b_{ii}^2 = \sum_{i\neq p,q} b_{ii}^2 + b_{pp}^2+b_{qq}^2 
= \sum_{i\neq p,q} a_{ii}^2 + b_{pp}^2+b_{qq}^2 
= \sum_{i=1}^n a_{ii}^2 + 2 a_{p q}^2 - 2 b_{p q}^2.
$$
Hence,
\begin{align*}
\off(B)^2 & =\Fnorm{B}^2 - \sum_{i=1}^n b_{i i}^2 
= \Fnorm{A}^2 - \sum_{i=1}^n a_{i i}^2 - 2 a_{p q}^2 + 2 b_{p q}^2 
=\off(A)^2-2 a_{p q}^2 + 2 b_{p q}^2. 
\end{align*}
\end{prf}

\subsection{Exact Jacobi rotations}
\label{sec Jacobi exact}

We first treat the classical case, in which each rotation zeroes out the chosen off-diagonal entry exactly.
The results of this subsection are well known; we include their proofs for completeness, and because the rational versions of Section~\ref{sec rational Jacobi} follow the same pattern.

\subsubsection{Construction}

Off-diagonal entries of a symmetric matrix $A \in \R^{n \times n}$ can be zeroed out by replacing $A$ with $J^\transp A J$, for some suitable Jacobi rotation $J$.
The following lemma makes this construction explicit, giving closed-form formulas for a Jacobi rotation that zeroes out a prescribed off-diagonal entry.

\begin{lemma}
\label{lem exact Jacobi construction}
Let $A \in \R^{n \times n}$ be symmetric and let $(p, q)$ be an index pair that satisfies $1 \le p<q \le n$.
If $a_{p q} = 0$, let $t := 0$; otherwise, let
$$
\tau:=\frac{a_{q q}-a_{p p}}{2 a_{p q}}, \qquad 
t := 
\begin{cases} 
\sqrt{1+\tau^2}-\tau \in (0,1] & \text{if } \tau \ge 0 \\ 
-\tau-\sqrt{1+\tau^2} \in (-1,0) & \text{if } \tau<0.
\end{cases}
$$
In both cases, let
$$
\theta:=\arctan(t), 
\qquad
c:= \frac{1}{\sqrt{1+t^2}}, 
\qquad 
s:=\frac{t}{\sqrt{1+t^2}}.
$$
Then $(c, s)$ is a cosine-sine pair associated with angle $\theta \in (-\pi/4,\pi/4]$.
Furthermore, if we let $B:=J^\transp A J$, where $J$ denotes the Jacobi rotation $J(p, q; c,s)$, then $b_{pq} = 0$.
\end{lemma}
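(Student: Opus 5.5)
The proof is a direct verification in three steps. First we check the range of $t$ and hence of $\theta$. Then we confirm that $(c,s)$ is a cosine-sine pair for that angle. Finally we compute $b_{pq}$ from \eqref{eq pq} and show that it vanishes.

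\textbf{Range of $t$ and $\theta$.} If $a_{pq}=0$, then $t=0$, so $\theta=0$, $c=1$ and $s=0$. Otherwise, for $\tau\ge0$ we write $t=\sqrt{1+\tau^2}-\tau = 1/(\sqrt{1+\tau^2}+\tau)$. This shows $t\in(0,1]$, with $t=1$ exactly when $\tau=0$. For $\tau<0$ we have $t=-1/(\sqrt{1+\tau^2}-\tau)=-1/(\sqrt{1+\tau^2}+|\tau|)\in(-1,0)$. In every case $t\in(-1,1]$, so $\theta=\arctan(t)\in(-\pi/4,\pi/4]$.

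\textbf{Cosine-sine pair.} On $(-\pi/2,\pi/2)$ the cosine is positive. Hence $\cos(\arctan t)=1/\sqrt{1+t^2}=c$ and $\sin(\arctan t)=t\cos(\arctan t)=s$. This also shows $c^2+s^2=1$.

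\textbf{The entry $b_{pq}$.} Expanding \eqref{eq pq}, the $(1,2)$ entry of $\begin{bmatrix}c&-s\\ s&c\end{bmatrix}\begin{bmatrix}a_{pp}&a_{pq}\\ a_{pq}&a_{qq}\end{bmatrix}\begin{bmatrix}c&s\\-s&c\end{bmatrix}$ is
\[
b_{pq}=(c^2-s^2)\,a_{pq}+cs\,(a_{pp}-a_{qq}).
\]
If $a_{pq}=0$, then $s=0$ and $b_{pq}=0$ immediately. Otherwise we divide by $c^2 a_{pq}$, which is nonzero since $c>0$. Using $s/c=t$ and $(a_{pp}-a_{qq})/a_{pq}=-2\tau$, we get
\[
\frac{b_{pq}}{c^2 a_{pq}} = 1-t^2-2\tau t .
\]
It remains to show that $t$ is a root of $t^2+2\tau t-1=0$. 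The two roots of this quadratic are $-\tau\pm\sqrt{1+\tau^2}$. For $\tau\ge0$ our $t$ is the root with the plus sign, and for $\tau<0$ it is the root with the minus sign. In both cases $1-t^2-2\tau t=0$, and therefore $b_{pq}=0$.

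The only step needing care is the sign convention in expanding \eqref{eq pq}, because the paper's rotation block has $s$ above the diagonal. The identity must come out as $b_{pq}=(c^2-s^2)a_{pq}+cs(a_{pp}-a_{qq})$, and not with the opposite sign on the $cs$ term. Only this sign makes the definition $\tau=(a_{qq}-a_{pp})/(2a_{pq})$ consistent, so this is the computation to double-check. Everything else is elementary algebra.
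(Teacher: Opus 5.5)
Your proof is correct and follows essentially the same route as the paper: you show $t\in(-1,1]$, identify $(c,s)$ as the cosine-sine pair of $\theta$ using $\cos(\arctan t)>0$, expand \eqref{eq pq} to $b_{pq}=a_{pq}(c^2-s^2)+(a_{pp}-a_{qq})cs$, and reduce to $t^2+2\tau t=1$. The differences are only cosmetic. You bound $t$ by rationalizing, where the paper uses $\abs{\tau}<\sqrt{1+\tau^2}\le 1+\abs{\tau}$, and you recognize $t$ as a root via the quadratic formula, where the paper substitutes it directly in each case.
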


\begin{prf}
In all cases $t\in(-1,1]$: this is clear if $a_{pq}=0$, and otherwise it follows from $\abs{\tau}<\sqrt{1+\tau^2}\le1+\abs{\tau}$, the second inequality being strict for $\tau\neq0$, which also gives the ranges for $t$ stated in the lemma.
Since $\theta=\arctan(t)\in(-\pi/2,\pi/2)$, we have $\cos(\theta)>0$.
Since $\tan(\theta)=t$, we have
$$
\cos(\theta)=\frac1{\sqrt{1+t^2}}=c, \qquad \sin(\theta)=\frac{t}{\sqrt{1+t^2}}=s,
$$
so $(c,s)$ is the cosine-sine pair associated with $\theta$. Since $t\in(-1,1]$, and since $\arctan$ is increasing with $\arctan(-1)=-\pi/4$ and $\arctan(1)=\pi/4$, we get $\theta\in(-\pi/4,\pi/4]$.

It remains to show $b_{pq}=0$.
If $a_{pq}=0$, then $t=0$, hence $c=1$ and $s=0$, and \eqref{eq pq} gives $b_{pq}=a_{pq}(c^2-s^2)+(a_{pp}-a_{qq})cs=0$.
Assume now $a_{pq}\neq0$. From \eqref{eq pq},
$$
b_{pq} = a_{pq}(c^2-s^2)+(a_{pp}-a_{qq})cs = a_{pq}\cdot\frac{1-t^2}{1+t^2} + (a_{pp}-a_{qq})\cdot\frac{t}{1+t^2} = \frac{a_{pq}\pare{(1-t^2)-2\tau t}}{1+t^2},
$$
where we used $a_{pp}-a_{qq}=-2a_{pq}\tau$ (from the definition of $\tau$, valid since $a_{pq}\neq0$). It then suffices to show $t^2+2\tau t-1=0$.

If $\tau\ge0$, then $t=\sqrt{1+\tau^2}-\tau$, so
$$
t^2+2\tau t = \pare{1+2\tau^2-2\tau\sqrt{1+\tau^2}} + \pare{2\tau\sqrt{1+\tau^2}-2\tau^2} = 1.
$$
If $\tau<0$, then $t=-\tau-\sqrt{1+\tau^2}$, so
$$
t^2+2\tau t = \pare{1+2\tau^2+2\tau\sqrt{1+\tau^2}} + \pare{-2\tau^2-2\tau\sqrt{1+\tau^2}} = 1.
$$
In either case $t^2+2\tau t=1$, hence $b_{pq}=0$, as desired.
\end{prf}

\subsubsection{Repeated updates}

Combining Lemmas~\ref{lem Jacobi norm} and \ref{lem exact Jacobi construction}, repeatedly zeroing out the largest-magnitude off-diagonal entry, one Jacobi rotation at a time, drives $\off(\cdot)$ toward zero at a geometric rate; this is a classical fact, which we state below in the form convenient for our later use.

Let $A \in \R^{n \times n}$ be symmetric, and let $(p, q)$ be an index pair that satisfies $1 \le p<q \le n$ and $|a_{p q}| = \max_{i \neq j}|a_{ij}|$.
We say that $B \in \R^{n\times n}$ is obtained by applying a \emph{Jacobi update} to $A$ if $B = J^\transp A J$ for some cosine-sine pair $(c,s)$, where $J:=J(p,q;c,s)$, such that $b_{pq}=0$.
Note that such a pair $(c,s)$ always exists, and can be constructed via the closed-form formulas in Lemma~\ref{lem exact Jacobi construction}.

\begin{lemma}
\label{lem exact Jacobi sweep}
Let $A \in \R^{n \times n}$ be symmetric and let $\delta \in (0,1]$.
For every nonnegative integer $k$, let $A^{(k)}$ denote a matrix obtained from $A$ by $k$ successive Jacobi updates (with $A^{(0)}:=A$).
Then, there exists 
$$
k \le \max\pare{0, \ \left\lceil \frac{n(n-1)}{2} \ln\pare{\frac{\off(A)^2}{\delta^2}}\right\rceil}
$$
such that $\off(A^{(k)}) \le \delta$.
\end{lemma}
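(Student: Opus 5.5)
The argument is the classical geometric-decrease estimate, obtained by combining Lemma~\ref{lem Jacobi norm} with the choice of the largest off-diagonal entry.

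\emph{One update.} Let $B$ be obtained from a symmetric $A$ by a Jacobi update on $(p,q)$ with $\abs{a_{pq}}=\max_{i\neq j}\abs{a_{ij}}$ and $b_{pq}=0$. Lemma~\ref{lem Jacobi norm} gives $\off(B)^2=\off(A)^2-2a_{pq}^2$. There are $n(n-1)$ off-diagonal entries, each of magnitude at most $\abs{a_{pq}}$, so $\off(A)^2\le n(n-1)a_{pq}^2$. Hence
\[
\off(B)^2\le\pare{1-\frac{2}{n(n-1)}}\off(A)^2 .
\]
Since every $A^{(k)}$ is symmetric (it is an orthogonal congruence of $A$), induction gives
\[
\off(A^{(k)})^2\le\pare{1-\tfrac{2}{n(n-1)}}^k\off(A)^2\le e^{-2k/(n(n-1))}\off(A)^2,
\]
where the last step uses $1-x\le e^{-x}$.

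\emph{Choice of $k$.} If $\off(A)\le\delta$, take $k=0$. This also covers the case $\off(A)=0$, where the logarithm in the statement would be undefined. Otherwise $\off(A)^2/\delta^2>1$, and we set
\[
k:=\ceilL{\frac{n(n-1)}{2}\ln\pare{\frac{\off(A)^2}{\delta^2}}}\ge 1 .
\]
Then $e^{-2k/(n(n-1))}\le \delta^2/\off(A)^2$, so $\off(A^{(k)})\le\delta$.

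The bound is in fact slightly stronger than needed, since the factor $n(n-1)/2$ would already suffice with $\ln$ of the ratio of norms rather than of squared norms; in any case the stated bound holds. There is one subtlety: if $\off(A^{(j)})=0$ for some $j<k$, an update still exists (with $a_{pq}=0$ and $(c,s)=(1,0)$), so the sequence up to $k$ is well defined and the inequality continues to hold. The only step requiring any care is the counting bound $\off(A)^2\le n(n-1)a_{pq}^2$. It depends on the maximality of $\abs{a_{pq}}$ together with the fact that the updated entry is exactly zero, which is part of the definition of a Jacobi update.
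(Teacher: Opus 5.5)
Your proof is correct and is essentially the paper's argument: the per-step contraction $\off(B)^2\le(1-1/N)\off(A)^2$ with $N=n(n-1)/2$ comes from Lemma~\ref{lem Jacobi norm} together with maximality of the pivot, and you iterate it and finish with $1-x\le e^{-x}$. One aside, which you do not use, is wrong: the stated bound is not stronger than needed. Your estimate requires $k\ge N\ln(\off(A)^2/\delta^2)=2N\ln(\off(A)/\delta)$, so taking $\ln$ of the ratio of norms with the same factor $N$ would only give $\off(A^{(k)})\le\sqrt{\delta\,\off(A)}$.
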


\begin{prf}
Let $N:=\frac{n(n-1)}{2}$ and $\xi := \ln\pare{\off(A)^2/\delta^2}$.
If $\off(A)^2 < \delta^2$, then $\off(A)\le\delta$, and $k=0$ satisfies the claim. We may therefore assume $\off(A)^2 \ge \delta^2$, so that $\xi\ge0$.
Let $K := \ceil{N\xi}$, a nonnegative integer. We show that $\off(A^{(K)}) \le \delta$.
Let $a_{pq}$ be the off-diagonal entry of $A$ with largest absolute value. Then
$$
\off(A)^2 \le n(n-1) a_{p q}^2 = 2 N a_{p q}^2.
$$
Since a Jacobi update satisfies $b_{pq}=0$, Lemma~\ref{lem Jacobi norm} gives
$$
\off(A^{(1)})^2 = \off(A)^2-2 a_{p q}^2 \le \pare{1-\frac{1}{N}} \off(A)^2.
$$
Since the same argument applies to each $A^{(k)}$, whose pivot is again an off-diagonal entry of largest absolute value, by induction
$$
\off(A^{(K)})^2 \le\pare{1-\frac{1}{N}}^{K} \off(A)^2.
$$
Since $K \ge N\xi$ and $0 \le 1-\frac1N<1$, we have $\pare{1-\frac1N}^K \le \pare{1-\frac1N}^{N\xi}$.
Since $1-\frac1N \le e^{-1/N}$, we also have $\pare{1-\frac1N}^{N\xi} \le e^{-\xi}$.
Thus, 
$$
\off(A^{(K)})^2 \le \pare{1-\frac1N}^K\off(A)^2 \le e^{-\xi} \off(A)^2
= \frac{\delta^2}{\off(A)^2}\off(A)^2
=\delta^2,
$$
so $\off(A^{(K)})\le\delta$, and $k=K$ satisfies the claim.
\end{prf}

\subsubsection{Near-diagonalization via exact rotations}

Combining Lemmas~\ref{lem exact Jacobi construction} and \ref{lem exact Jacobi sweep} yields a real-RAM algorithm for driving a symmetric matrix arbitrarily close to diagonal form; we record this immediate consequence below before turning, in the next section, to a version of this result suitable for computation on a Turing machine.
In the real RAM model we assume, as is standard, that square roots can be computed at unit cost.
Throughout, we state our bounds in terms of $\log(1/\delta)$ when they depend only on the precision requested, as for the number of arithmetic operations in the real RAM model, and in terms of $\size(\delta)$ when they depend on the encoding of $\delta$, as for running times on a Turing machine.
The two are not interchangeable: for every rational $\delta \in (0,1]$ we have $\ceil{\log(1/\delta)} \le \size(\delta)$, but $\size(\delta)$ also accounts for the number of bits needed to encode $\delta$, which may be much larger than $\log(1/\delta)$.
\begin{proposition}[Near-diagonalization via exact Jacobi rotations]
\label{prop exact Jacobi near diag}
Let $A \in \R^{n \times n}$ be symmetric and let $\delta \in (0,1]$.
There is an algorithm that computes an orthogonal matrix $L \in \R^{n \times n}$ such that $\off(L^\transp A L) \le \delta$.
The algorithm performs $O\pare{n^4\pare{1+\log(1/\delta)+\log(\max(1,\Fnorm{A}))}}$ arithmetic operations in the real RAM model.
\end{proposition}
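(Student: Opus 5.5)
The algorithm is the classical cyclic-by-maximum Jacobi method. Start from $A^{(0)}:=A$ and $L^{(0)}:=I$. At step $k$, if $\off(A^{(k)})\le\delta$, stop and output $L:=L^{(k)}$. Otherwise:
\begin{itemize}
\item find an index pair $(p,q)$ with $p<q$ maximizing $\abs{a^{(k)}_{pq}}$;
\item build the cosine-sine pair $(c,s)$ from the closed-form formulas of Lemma~\ref{lem exact Jacobi construction};
\item set $J:=J(p,q;c,s)$, $A^{(k+1)}:=J^\transp A^{(k)}J$ and $L^{(k+1)}:=L^{(k)}J$.
\end{itemize}
By Lemma~\ref{lem exact Jacobi construction}, $a^{(k+1)}_{pq}=0$, so each step is a Jacobi update in the sense defined before Lemma~\ref{lem exact Jacobi sweep}. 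By induction, $A^{(k)}=(L^{(k)})^\transp A L^{(k)}$, and $L^{(k)}$ is orthogonal as a product of Jacobi rotations. Hence at termination $\off(L^\transp AL)\le\delta$.

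For termination, Lemma~\ref{lem exact Jacobi sweep} applies to any sequence of Jacobi updates. Its proof in fact shows that $\off(A^{(k)})^2\le(1-1/N)^k\off(A)^2$, where $N=n(n-1)/2$. So the stopping test succeeds after at most
$$K=\max\pare{0,\ \ceilL{N\ln\pare{\off(A)^2/\delta^2}}}$$
steps. Using $\off(A)\le\Fnorm{A}\le\max(1,\Fnorm{A})$, we get
$$\ln\pare{\off(A)^2/\delta^2}\le 2\ln 2\,\pare{\log(1/\delta)+\log(\max(1,\Fnorm{A}))},$$
so $K=O\pare{n^2(1+\log(1/\delta)+\log(\max(1,\Fnorm{A})))}$. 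The additive $1$ absorbs the ceiling.

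It remains to count arithmetic operations per iteration and check that each is $O(n^2)$:
\begin{itemize}
\item The stopping test costs $O(n^2)$: computing $\off(A^{(k)})^2$ and comparing it with $\delta^2$ needs no square root.
\item Locating the maximum off-diagonal entry costs $O(n^2)$.
\item Computing $\tau,t,c,s$ costs $O(1)$ operations, including two square roots, which are unit cost in the real RAM model.
\item Forming $J^\transp A^{(k)}J$ costs $O(n)$, since only rows and columns $p,q$ change, each new entry being a combination of two old ones.
\item Forming $L^{(k)}J$ costs $O(n)$, since only columns $p,q$ change.
\end{itemize}
Thus each iteration is $O(n^2)$, dominated by the maximum search and the stopping test, and the total is $O(n^2)\cdot O(K+1)=O\pare{n^4(1+\log(1/\delta)+\log(\max(1,\Fnorm{A})))}$.

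The only delicate point is converting the bound of Lemma~\ref{lem exact Jacobi sweep}, stated with $\ln$ and $\off(A)$, into the stated form with $\log$ and $\max(1,\Fnorm{A})$. This includes the degenerate case $\off(A)\le\delta$, where $K=0$ and only the initial $O(n^2)$ test is performed. I expect no real obstacle; the argument is bookkeeping on top of Lemmas~\ref{lem exact Jacobi construction} and~\ref{lem exact Jacobi sweep}.
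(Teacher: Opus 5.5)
Your proposal is correct and follows the paper's proof: Lemma~\ref{lem exact Jacobi sweep} bounds the number of Jacobi updates, each update costs $O(n^2)$ operations (dominated by the pivot search), and the total is $O(n^2(1+k))$. The only difference is your explicit stopping test $\off(A^{(k)})^2\le\delta^2$, a small refinement that gives the algorithm a concrete termination rule without having to evaluate the logarithm in the iteration bound.
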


\begin{prf}
From Lemma~\ref{lem exact Jacobi sweep}, there exists 
$$
k \le \max\pare{0,\ \left\lceil \frac{n(n-1)}{2} \ln\pare{\frac{\off(A)^2}{\delta^2}}\right\rceil}
$$ 
such that after $k$ Jacobi updates, the obtained matrix $A^{(k)}$ satisfies $\off(A^{(k)})\le\delta$.
Let $J_1, \dots, J_k$ be the corresponding Jacobi rotations, each constructed via Lemma~\ref{lem exact Jacobi construction} to zero out the largest-magnitude off-diagonal entry, and let $L := J_1 \cdots J_k$, so that $L^\transp A L = A^{(k)}$. As a product of orthogonal matrices, $L$ is orthogonal.

\smallskip
\noindent
\textbf{Running time.}
Since $\off(A)\le\Fnorm A$, Lemma~\ref{lem exact Jacobi sweep} bounds the number of updates by
$$
k = O\pare{n^2\pare{\log(1/\delta)+\log(\max(1,\Fnorm A))}}.
$$
Each update costs $O(n^2)$ arithmetic operations: $O(n^2)$ comparisons to find the pivot $(p,q)$, $O(1)$ to construct $J_{t+1}$ from the closed-form formulas of Lemma~\ref{lem exact Jacobi construction}, and $O(n^2)$ to form $A^{(t+1)} = J_{t+1}^\transp A^{(t)} J_{t+1}$, since only rows and columns $p,q$ change. Taking into account the $O(n^2)$ arithmetic operations needed to form the output $L$, which are performed even when $k=0$, the total is therefore
\[
O\pare{n^2(1+k)} = O\pare{n^4\pare{1+\log(1/\delta)+\log(\max(1,\Fnorm A))}}. 
\]
\end{prf}

\subsection{Rational Jacobi rotations}
\label{sec rational Jacobi}

Proposition~\ref{prop exact Jacobi near diag} drives $\off(\cdot)$ below any prescribed tolerance using a number of arithmetic operations that is polynomial in $n$ and in $\log(1/\delta)$, but it does so in the real RAM model, and it cannot be implemented on a Turing machine: both the Jacobi rotation $J$ constructed in Lemma~\ref{lem exact Jacobi construction} and the resulting matrix $B:=J^\transp A J$ can have irrational entries, as suggested by the presence of square roots in the formulas defining $c$ and $s$.

\begin{example}
\label{ex irrational rotation}
Let $n := 2$, let $A := \begin{bmatrix} 0 & 1 \\ 1 & 1 \end{bmatrix}$, and let $(p,q) := (1,2)$.
We have $\tau = 1/2$ and $t = (\sqrt5-1)/2$ in Lemma~\ref{lem exact Jacobi construction},
so that
$$
c = \sqrt{\frac{5+\sqrt5}{10}}, \qquad s = \sqrt{\frac{5-\sqrt5}{10}}
$$
are both irrational (since $c^2=\frac{5+\sqrt5}{10}$ and $s^2=\frac{5-\sqrt5}{10}$ are themselves irrational, as $\sqrt5\notin\Q$).
Thus, even though $A$ has integer entries, the Jacobi rotation $J(p,q;c,s)$
diagonalizing $A$ has irrational entries. Moreover, $B:=J^\transp A J$ has irrational entries as well:
$$
B
=
J^\transp A J
=
\begin{bmatrix} c & s \\ -s & c \end{bmatrix}^\transp
\begin{bmatrix} 0 & 1 \\ 1 & 1 \end{bmatrix}
\begin{bmatrix} c & s \\ -s & c \end{bmatrix}
=
\begin{bmatrix} \dfrac{1-\sqrt5}{2} & 0 \\[1mm] 0 & \dfrac{1+\sqrt5}{2} \end{bmatrix}.
$$
\end{example}

In practice, $c$ and $s$ are often rounded, but this in general destroys orthogonality, so the resulting matrix is not a Jacobi rotation.
Our next goal is instead to approximate a target Jacobi rotation by another Jacobi rotation that is rational and can be computed in polynomial time.
This is possible because rational points are dense on the two-dimensional unit circle, as can be seen via the Weierstrass substitution
$$
c = \frac{1-u^2}{1+u^2}, \qquad s = \frac{2u}{1+u^2},
$$
also known as the $\tan(\theta/2)$ substitution: as $u$ ranges over the rationals, this parametrizes a dense set of rational points on the circle.
We will use this substitution throughout our proofs, together with the following simple lemma.

\begin{lemma}
\label{lem approx bounds}
Let $\delta \ge 0$ and let $a, \hat a, b, \hat b \in \R$ be such that $\abs{a-\hat a} \le \delta$ and $\abs{b-\hat b} \le \delta$.
\begin{enumerate}
\item
\label{lem approx bounds square}
If $\abs{a}, \abs{\hat a} \le 1$, then
$\abs{a^2-\hat a^2} \le 2 \delta$.
\item
\label{lem approx bounds prod}
If $\abs{a}, \abs{\hat b} \le 1$, then
$\abs{ab-\hat a \hat b} \le 2 \delta$.
\item
\label{lem approx bounds ratio}
If $\abs{b},\abs{\hat b} \ge 1$, then
$$
\absL{\frac{a}{b} - \frac{\hat a}{\hat b}} \le (\abs{a} + \abs{b}) \delta.
$$
\end{enumerate}
\end{lemma}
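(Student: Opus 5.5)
Each of the three parts follows from a short algebraic identity combined with the triangle inequality, using the hypotheses $\abs{a-\hat a}\le\delta$ and $\abs{b-\hat b}\le\delta$.

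For part~\ref{lem approx bounds square}, factor
\[
a^2-\hat a^2=(a-\hat a)(a+\hat a).
\]
Since $\abs{a},\abs{\hat a}\le 1$, we have $\abs{a+\hat a}\le 2$, and the first factor is at most $\delta$ in absolute value. This gives the bound $2\delta$.

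For part~\ref{lem approx bounds prod}, add and subtract $a\hat b$:
\[
ab-\hat a\hat b = a(b-\hat b) + \hat b(a-\hat a).
\]
This splitting is chosen so that each difference is multiplied by a quantity that the hypothesis bounds by $1$, namely $\abs{a}\le 1$ and $\abs{\hat b}\le 1$. Each term is therefore at most $\delta$, and the total is at most $2\delta$.

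Part~\ref{lem approx bounds ratio} needs the only real choice. Put everything over the common denominator $b\hat b$ and add and subtract $ab$ in the numerator:
\[
\frac{a}{b}-\frac{\hat a}{\hat b}=\frac{a\hat b-\hat a b}{b\hat b}=\frac{a(\hat b-b)+b(a-\hat a)}{b\hat b}.
\]
Splitting the fraction gives the two terms $\frac{a(\hat b-b)}{b\hat b}$ and $\frac{a-\hat a}{\hat b}$.

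The first term is at most $\abs{a}\delta/(\abs{b}\abs{\hat b})\le \abs{a}\delta$, because $\abs{b},\abs{\hat b}\ge1$. The second term is at most $\delta/\abs{\hat b}\le\delta\le\abs{b}\delta$, again using $\abs{\hat b}\ge 1$ and $\abs{b}\ge 1$. Summing, the difference is at most $(\abs{a}+\abs{b})\delta$, as claimed. In fact this argument gives the slightly sharper bound $(\abs{a}+1)\delta$.

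The main care point is choosing the add-and-subtract term in each part so that every difference is multiplied by a quantity the hypotheses control. In part~\ref{lem approx bounds prod} that term must be $a\hat b$, not $\hat a b$. In part~\ref{lem approx bounds ratio} the splitting must keep $b$ in the denominator of the first term and cancel $b$ in the second. Beyond that the proof is routine.
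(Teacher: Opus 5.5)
Your proof is correct and matches the paper's argument: the same factorization in part~1, the same add-and-subtract of $a\hat b$ in part~2, and the same numerator identity $a\hat b-\hat a b=a(\hat b-b)+b(a-\hat a)$ in part~3. The only difference is in part~3, where the paper bounds $\abs{a\hat b-\hat a b}/\abs{b\hat b}$ by simply dropping the denominator, while you cancel $b$ in the second term, which gives your slightly sharper bound $(\abs{a}+1)\delta$.
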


\begin{prf}
\ref{lem approx bounds square}. We have $\abs{a^2 - \hat a^2} = \abs{a+\hat a} \cdot \abs{a-\hat a} \le 2 \delta$.

\ref{lem approx bounds prod}. We have $\abs{ab-\hat a \hat b} = \abs{a(b - \hat b) + \hat b (a -\hat a)} \le \abs{a} \cdot \abs{b - \hat b} + \abs{\hat b} \cdot \abs{a -\hat a} \le 2 \delta$.


\ref{lem approx bounds ratio}. We have
\begin{align*}
\absL{\frac{a}{b} - \frac{\hat a}{\hat b}} 
& = \absL{\frac{a \hat b - \hat a b}{b \hat b}} 
\le \abs{a \hat b - \hat a b} 
= \abs{a (\hat b - b) + b (a - \hat a)} \\
& \le \abs{a} \cdot \abs{\hat b - b} + \abs{b} \cdot \abs{a - \hat a} 
\le (\abs{a} + \abs{b}) \delta,
\end{align*}
where the first inequality uses $\abs{b \hat b} \ge 1$.
\end{prf}


\subsubsection{Construction}

The next lemma is a rational, algorithmic version of Lemma~\ref{lem exact Jacobi construction}: rather than the exact (generally irrational) cosine-sine pair $(c,s)$ produced by that lemma, it efficiently computes a nearby rational cosine-sine pair $(\hat c,\hat s)$.

\begin{lemma}
\label{lem rational Jacobi approx}
Let $A \in \R^{n \times n}$ be symmetric and let $(p, q)$ be an index pair that satisfies $1 \le p<q \le n$.
Let $c,s$ be defined as in Lemma~\ref{lem exact Jacobi construction}, and let $\delta \in (0,1]$.
There is an algorithm that, on input $a_{pp}, a_{qq}, a_{pq}$ and $\delta$, finds integers $P_1,P_2,Q$ with $Q>0$, $P_1^2+P_2^2=Q^2$, and $\size(Q) \le 2\ceil{\log(1/\delta)}+10$, such that, letting $\hat c := P_1/Q$ and $\hat s := P_2/Q$,
$$
\abs{c-\hat c}\le \delta, \qquad \abs{s - \hat s}\le \delta.
$$
The algorithm performs $O(1+\log(1/\delta))$ arithmetic operations in the real RAM model.
Moreover, if $a_{pp}, a_{qq}, a_{pq}, \delta \in \Q$, the algorithm runs on a Turing machine in time polynomial in $\size(a_{pp}) + \size(a_{qq}) + \size(a_{pq}) + \size(\delta)$.
\end{lemma}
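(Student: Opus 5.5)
Write the exact pair through the Weierstrass substitution. With $\theta\in(-\pi/4,\pi/4]$ from Lemma~\ref{lem exact Jacobi construction}, set $u^*:=\tan(\theta/2)$. Then $u^*\in(-\tan(\pi/8),\tan(\pi/8)]\subset(-1/2,1/2)$, and
$$
c=\frac{1-u^{*2}}{1+u^{*2}}, \qquad s=\frac{2u^*}{1+u^{*2}}.
$$
The map $g(u):=\bigl(\frac{1-u^2}{1+u^2},\frac{2u}{1+u^2}\bigr)$ is Lipschitz on $\R$ with constant $2$ in each coordinate, since its derivatives are bounded in absolute value by $2$. So it suffices to find a rational $\hat u=a/b$ with $b=2^{\ceil{\log(1/\delta)}+1}$ and $\abs{\hat u-u^*}\le\delta/2$. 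We then set
$$
P_1:=b^2-a^2, \qquad P_2:=2ab, \qquad Q:=a^2+b^2 .
$$
This gives $P_1^2+P_2^2=Q^2$ exactly and $(\hat c,\hat s)=g(\hat u)$. Since $\abs{a}\le b$, we have $Q\le 2b^2\le 2^{2\ceil{\log(1/\delta)}+3}$, which gives $\size(Q)\le 2\ceil{\log(1/\delta)}+10$ with room to spare.

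The remaining task is to compute $\hat u$ without square roots. I will do this by bisection on the interval $[-1/2,1/2]$, which contains $u^*$. The key fact is that, whenever $a_{pq}\ne0$, $u^*$ is the unique zero in $[-1/2,1/2]$ of a polynomial with computable sign. By Lemma~\ref{lem exact Jacobi construction}, $t=\tan\theta$ is the root of $t^2+2\tau t-1=0$ lying in $(-1,1]$, and $t=2u/(1-u^2)$. Substituting and clearing the positive denominator $(1-u^2)^2$ shows that $u^*$ is a root of
$$
\phi(u):=4u^2+4\tau u(1-u^2)-(1-u^2)^2 .
$$
Multiplying by $2a_{pq}$ removes $\tau$: define $\psi(u):=2a_{pq}\phi(u)=8a_{pq}u^2+4(a_{qq}-a_{pp})u(1-u^2)-2a_{pq}(1-u^2)^2$.

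The main obstacle is justifying that bisection on the sign of $\psi$ converges to $u^*$. The function $t(u)=2u/(1-u^2)$ is a strictly increasing bijection from $[-1/2,1/2]$ onto $[-4/3,4/3]$. On this range, $t^2+2\tau t-1$ has exactly one root, namely $t^*\in(-1,1]$. Its other root is $-1/t^*$, which has absolute value at least $1$. If that other root also landed in $[-4/3,4/3]$, it would be an endpoint issue, so I will restrict the bisection to the interval $[-\tan(\pi/8),\tan(\pi/8)]$ replaced by a rational sub-interval, for instance $[-0.42,0.42]$. That interval maps onto a $t$-range contained in $(-1.01,1.01)$, where the only root of $t^2+2\tau t-1$ is $t^*$, up to the boundary case $t^*=1$, which lies inside. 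Consequently $\phi$ changes sign exactly once, at $u^*$, and the sign of $\phi(u)$ tells us on which side of $u^*$ the point $u$ lies.

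Each bisection step evaluates $\psi$ at a dyadic point with $O(1)$ arithmetic operations. After $O(1+\log(1/\delta))$ halvings the interval has length at most $\delta/2$, and I take a dyadic endpoint with denominator $b$ as $\hat u$. If $a_{pq}=0$, then $u^*=0$ and we output $P_1=Q=1$, $P_2=0$ directly. The operation count is therefore $O(1+\log(1/\delta))$. On a Turing machine with rational inputs, every evaluated number is a polynomial of degree at most $4$ in the inputs and in a dyadic number with $O(\log(1/\delta))$ bits, so all numbers stay of size polynomial in $\size(a_{pp})+\size(a_{qq})+\size(a_{pq})+\size(\delta)$, and the running time is polynomial in that quantity.
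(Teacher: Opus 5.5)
Several parts of your plan are sound and somewhat cleaner than the paper's. The Lipschitz bound (constant $2$) for $u\mapsto\bigl(\frac{1-u^2}{1+u^2},\frac{2u}{1+u^2}\bigr)$ replaces Lemma~\ref{lem approx bounds}. The triple $(b^2-a^2,2ab,a^2+b^2)$ is the one the paper uses. Testing the sign of the quartic $\psi$ is a legitimate square-root-free alternative to the paper's squared comparisons with case analysis on the signs of $\alpha,\beta,\gamma$.

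The gap is the claim your bisection rests on: that $\phi$ has a single sign change, at $u^*$, on your symmetric search interval. This is false on $[-1/2,1/2]$, and it is still false on $[-0.42,0.42]$. The second root $-1/t^*$ of $t^2+2\tau t-1$ has absolute value in $[1,\infty)$ but can be arbitrarily close to $1$; it equals $-1$ when $\tau=0$. So it is not an endpoint effect that a slightly smaller symmetric interval removes. (Also, $t(0.42)=0.84/0.8236\approx1.0199$, so the image of $[-0.42,0.42]$ is not inside $(-1.01,1.01)$; even that interval contains both $\pm1$.)

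Concretely, take $a_{pp}=a_{qq}=0$ and $a_{pq}=1$. Then $\tau=0$, $u^*=\sqrt2-1$, and $\phi(u)=4u^2-(1-u^2)^2$ vanishes at both $u=\pm(\sqrt2-1)$, which lie in $[-0.42,0.42]$. Moreover $\phi(\pm0.42)>0>\phi(0)=-1$. A sign-change bisection therefore keeps $[-0.42,0]$ and converges to $1-\sqrt2$, producing $\hat s\approx-1/\sqrt2$ while $s=1/\sqrt2$. A fixed rule such as ``$\psi(m)<0$ means $m<u^*$'' does not rescue it either. When $\tau\ge0$, $\phi<0$ to the left of $u^*$, but when $\tau<0$, $\phi>0$ to the left of $u^*$. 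The factor $2a_{pq}$ flips the sign again when $a_{pq}<0$.

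The missing idea is the case split on the sign of $\tau$ that the paper makes. If $\tau\ge0$ then $u^*\in(0,\sqrt2-1]$, and if $\tau<0$ then $u^*\in(1-\sqrt2,0)$; the sign of $\tau$ is computable from the inputs. Bisect on $[0,1/2]$ or $[-1/2,0]$ accordingly. On these intervals $t(u)$ ranges over $[0,4/3]$ or $[-4/3,0]$, which excludes $-1/t^*$ because it has the sign opposite to $t^*$. Also $\phi(0)=-1$ and $\phi(\pm1/2)=7/16\pm3\tau/2>0$ on the relevant side, so $\phi$ has exactly one sign change there, at $u^*$.

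Bisecting this dyadic interval also repairs a smaller inconsistency. The bisection points of $[-0.42,0.42]$ are not multiples of $1/b$, so your bound on $\size(Q)$ does not apply to the output as you described it. After $\ceil{\log(1/\delta)}$ halvings of $[0,1/2]$ (or $[-1/2,0]$), the endpoints are multiples of $1/b$ and the interval has length $1/b\le\delta/2$. With these changes the rest of your argument goes through, including the size bound and both running-time claims.
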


\begin{prf}
If $a_{pq}=0$, then $t=0$, $c=1$, and $s=0$, and the algorithm returns $(P_1,P_2,Q):=(1,0,1)$: then $P_1^2+P_2^2=Q^2$, the bound on $\size(Q)$ holds trivially, and $\hat c = c$, $\hat s = s$.
We may therefore assume $a_{pq}\neq0$.
We compute $\tau \in \R$ as defined in Lemma~\ref{lem exact Jacobi construction}, and we define $d := \ceil{\log(8/\delta)} = 3+\ceil{\log(1/\delta)}$, which is a positive integer since $\delta \le 1$.
Let $\theta, t$ be as in Lemma~\ref{lem exact Jacobi construction} and let
$$
u := \tan(\theta/2) = \frac{\sqrt{1+t^2}-1}{t}.
$$

\smallskip
\noindent
\textbf{Construction of $\hat u$ with $\abs{u - \hat u} \le 2^{-d}$.}
We give an algorithm that takes in input $\tau$, and returns 
$\hat u \in \Q \cap [-1,1]$ with $\size(\hat u) \le 2d+3$ so that 
$$
\abs{u - \hat u} \le 2^{-d}.
$$

First, we consider the case $\tau \ge 0$.
The algorithm is based on a simple binary search. 
Since $t\in(0,1]$, we have $\theta \in (0,\pi/4]$ and hence $u = \tan(\theta/2) \in (0,1)$, thus we start from the interval $[0,1]$.
At each iteration we compare the midpoint $m > 0$ of the interval with $u$ using the conditions on $u \lesseqgtr m$ derived below:
\begin{align*}
u \lesseqgtr m \quad
& \Leftrightarrow \quad
\frac{\sqrt{1+\pare{\sqrt{1+\tau^2}-\tau}^2}-1}{\sqrt{1+\tau^2}-\tau} \lesseqgtr m \\
& \Leftrightarrow \quad
\sqrt{1+\pare{\sqrt{1+\tau^2}-\tau}^2} \lesseqgtr m \sqrt{1+\tau^2} - m \tau + 1 \\
& \Leftrightarrow \quad
1+\pare{\sqrt{1+\tau^2}-\tau}^2 \lesseqgtr \pare{m \sqrt{1+\tau^2} - m \tau + 1}^2 \\
& \Leftrightarrow \quad
2+2\tau^2 -2 \tau \sqrt{1+\tau^2} \lesseqgtr 2 m^2 \tau^2 + m^2 - 2 m \tau  + 1 - 2 m (m \tau - 1) \sqrt{1+\tau^2} \\
& \Leftrightarrow \quad
\underbrace{\pare{2 m^2 \tau - 2 m-2 \tau}}_{=:\alpha} \sqrt{1+\tau^2} \lesseqgtr \underbrace{{2 m^2 \tau^2 + m^2 - 2 m \tau - 2\tau^2 -1}}_{=:\gamma} \\
& \Leftrightarrow \quad
\begin{cases}
-1 \lesseqgtr 1 & \qquad \text{if $\alpha < 0, \ \gamma \ge 0$} \\
1 \lesseqgtr -1 & \qquad \text{if $\alpha \ge 0, \ \gamma < 0$} \\
\alpha^2 \pare{1+\tau^2} \lesseqgtr \gamma^2 & \qquad \text{if $\alpha,\gamma \ge 0$} \\
\gamma^2 \lesseqgtr \alpha^2 \pare{1+\tau^2} & \qquad \text{if $\alpha,\gamma < 0$}.
\end{cases}
\end{align*}
At the end of iteration $d$, we have an interval of length $2^{-d}$, with rational extreme points in $[0,1]$ of the form $p/2^{d}$ with $p \in \bra{0,1,\dots,2^{d}}$.
Since this interval contains $u$, either extreme point is at distance at most $2^{-d}$ from $u$.
We then set $\hat u$ to be any such extreme point.

\smallskip

Next, we consider the case $\tau < 0$.
Again, our algorithm is based on a simple binary search. 
Since $t\in(-1,0)$, we have $\theta \in (-\pi/4,0)$ and hence $u = \tan(\theta/2) \in (-1,0)$, thus we start from the interval $[-1,0]$.
At each iteration we compare the midpoint $m < 0$ of the interval with $u$ using the conditions on $u \lesseqgtr m$ derived below:
\begin{align*}
u \lesseqgtr m \quad
& \Leftrightarrow \quad
\frac{\sqrt{1+\pare{-\tau-\sqrt{1+\tau^2}}^2}-1}{-\tau-\sqrt{1+\tau^2}} \lesseqgtr m \\
& \Leftrightarrow \quad
-m \sqrt{1+\tau^2} - m \tau + 1 \lesseqgtr \sqrt{1+\pare{-\tau-\sqrt{1+\tau^2}}^2} \\
& \Leftrightarrow \quad
\pare{-m \sqrt{1+\tau^2} - m \tau + 1}^2 \lesseqgtr 1+\pare{-\tau-\sqrt{1+\tau^2}}^2 \\
& \Leftrightarrow \quad
2 m^2 \tau^2 + m^2 - 2 m \tau  + 1 - 2 m (1 -m \tau ) \sqrt{1+\tau^2} \lesseqgtr 2+2\tau^2 +2 \tau \sqrt{1+\tau^2} \\
& \Leftrightarrow \quad
\underbrace{{2 m^2 \tau^2 + m^2 - 2 m \tau - 2\tau^2 -1}}_{=:\gamma} \lesseqgtr \underbrace{\pare{2 m + 2 \tau - 2 m^2 \tau}}_{=:\beta} \sqrt{1+\tau^2} \\
& \Leftrightarrow \quad
\begin{cases}
1 \lesseqgtr -1 & \qquad \text{if $\gamma \ge 0, \ \beta < 0$} \\
-1 \lesseqgtr 1 & \qquad \text{if $\gamma < 0, \ \beta \ge 0$} \\
\gamma^2 \lesseqgtr \beta^2 \pare{1+\tau^2} & \qquad \text{if $\gamma,\beta \ge 0$} \\
\beta^2 \pare{1+\tau^2} \lesseqgtr \gamma^2 & \qquad \text{if $\gamma,\beta < 0$}.
\end{cases}
\end{align*}
At the end of iteration $d$, we have an interval of length $2^{-d}$, with rational extreme points in $[-1,0]$ of the form $p/2^{d}$ with $p \in \bra{-2^{d}, -2^{d}+1,\dots,0}$.
Since this interval contains $u$, either extreme point is at distance at most $2^{-d}$ from $u$.
We then set $\hat u$ to be any such extreme point.

In both cases, since $\hat u = p/2^{d}$ for some integer $p$ with $\abs{p}\le 2^{d}$, we have
$$
\size(\hat u) \le 1+\ceil{\log(2^{d}+1)}+\ceil{\log(2^{d}+1)} = 2d+3.
$$

We remark that the above comparisons only involve the rational quantities $\alpha,\beta,\gamma,\tau,m$, and never require evaluating the irrational quantity $\sqrt{1+\tau^2}$ itself; this is what allows the algorithm to run in polynomial time on a Turing machine.

\smallskip
\noindent
\textbf{Construction of $(\hat c,\hat s)$.}
In the remainder of the proof, we denote $\hat\theta := 2 \arctan(\hat u)$ so that $\hat u = \tan(\hat\theta/2)$.
Since $\hat u = p/2^{d}$ with $\abs{p}\le 2^{d}$, define
$$
P_1 := 2^{2d}-p^2, \qquad P_2 := 2p\cdot 2^{d}, \qquad Q := 2^{2d}+p^2,
$$
and let $\hat c := P_1/Q$, $\hat s := P_2/Q$. Note that
$$
\hat c = \cos(\hat\theta) = \frac{1-\hat u^2}{1+\hat u^2}, \qquad \hat s = \sin(\hat\theta) = \frac{2\hat u}{1+\hat u^2},
$$
so $(\hat c,\hat s)$ is the cosine-sine pair associated with $\hat\theta$, approximating
$$
c = \cos(\theta) = \frac{1-u^2}{1+u^2} \in \R, \qquad s = \sin(\theta) = \frac{2u}{1+u^2} \in \R.
$$
Also, $Q>0$, and
$$
P_1^2+P_2^2 = (2^{2d})^2-2\cdot2^{2d}p^2+p^4+4p^2\cdot2^{2d} = (2^{2d})^2+2\cdot2^{2d}p^2+p^4 = (2^{2d}+p^2)^2 = Q^2,
$$
implying $\hat c^2+\hat s^2=1$.
Since $0\le p^2\le2^{2d}$, we have $2^{2d}\le Q\le2\cdot2^{2d}=2^{2d+1}$, so
$$
\size(Q) \le 1+\ceil{\log(2^{2d+1}+1)}+\ceil{\log 2} = 1+(2d+2)+1 = 2d+4 = 2\ceil{\log(1/\delta)}+10.
$$

\smallskip
\noindent
\textbf{Bounding $\abs{c-\hat c}$ and $\abs{s-\hat s}$.}
From Lemma~\ref{lem approx bounds}.\ref{lem approx bounds square} (with $a=u,\hat a=\hat u$), since $u,\hat u\in[-1,1]$ and $\abs{u-\hat u}\le 2^{-d}$, we obtain
$$
\abs{u^2-\hat u^2} \le 2\cdot 2^{-d} = 2^{-d+1}.
$$
From Lemma~\ref{lem approx bounds}.\ref{lem approx bounds ratio} (with $a = 1 - u^2$, $\hat a = 1 - \hat u^2$, $b = 1 + u^2$, $\hat b = 1 + \hat u^2$), we obtain
$$
\abs{c - \hat c} \le 3 \cdot 2^{-d+1} \le 2^{-d+3} \le \delta,
$$
where we used bounds $\abs{1 - u^2} \le 1$, $\abs{1 + u^2} \le 2$ and $\abs{1 + u^2}, \abs{1 + \hat u^2} \ge 1$.
Similarly, from Lemma~\ref{lem approx bounds}.\ref{lem approx bounds ratio} (with $a = 2u$, $\hat a = 2 \hat u$, $b = 1 + u^2$, $\hat b = 1 + \hat u^2$), we obtain
$$
\abs{s - \hat s} \le 4 \cdot 2^{-d+1} = 2^{-d+3} \le \delta,
$$
where we used bounds $\abs{2 u} \le 2$, $\abs{1 + u^2} \le 2$ and $\abs{1 + u^2}, \abs{1 + \hat u^2} \ge 1$.

\smallskip
\noindent
\textbf{Running time.}
Computing $d$ from $\delta$, performing the $d$ binary-search iterations that construct $\hat u$, and constructing $(\hat c,\hat s)$ with a further $O(1)$ arithmetic operations amount to $O(d)=O(1+\log(1/\delta))$ arithmetic operations in total; this establishes the real RAM bound.
Now suppose $a_{pp},a_{qq},a_{pq},\delta\in\Q$. Then $\tau$ has size $O(\size(a_{pp})+\size(a_{qq})+\size(a_{pq}))$, and the midpoint $m$ at iteration $i$ has size $O(i)=O(d)$, so the quantities $\alpha,\beta,\gamma$, obtained from $\tau$ and $m$ by $O(1)$ arithmetic operations, have size $O(\size(a_{pp})+\size(a_{qq})+\size(a_{pq})+\size(\delta))$, where we used $d=O(\size(\delta))$. Since each arithmetic operation on rationals of size $L$ can be performed in time polynomial in $L$, and $d$ is computable from $\delta$ in polynomial time, each of the $O(d)$ iterations and the $O(1)$ arithmetic operations forming $(\hat c,\hat s)$ run in polynomial time; hence the entire algorithm runs on a Turing machine in time polynomial in $\size(a_{pp})+\size(a_{qq})+\size(a_{pq})+\size(\delta)$.
\end{prf}

The next lemma builds on Lemma~\ref{lem rational Jacobi approx}: for a given tolerance $\delta$, we show that the exact condition $b_{pq}=0$ from Lemma~\ref{lem exact Jacobi construction} can be relaxed to $\abs{\hat b_{pq}}\le\delta$, for a suitable rotation $\hat J$.
This is achieved by invoking Lemma~\ref{lem rational Jacobi approx} with a tolerance $\delta'$ obtained from $\delta$ by scaling it down by the magnitude of $a_{pp},a_{qq},a_{pq}$, which is how accurately $\hat c$ and $\hat s$ must approximate $c$ and $s$ in order for $\hat b_{pq}$ to be at most $\delta$.

\begin{lemma}
\label{lem rational Jacobi construction}
Let $A \in \R^{n \times n}$ be symmetric, let $(p, q)$ be an index pair that satisfies $1 \le p<q \le n$, and let $\delta \in (0,1]$.
Let $S := \max\bra{0, \ceil{\log\pare{4\pare{\abs{a_{p q}}+\abs{a_{p p}}+\abs{a_{q q}}}}}}$.
There is an algorithm that, on input $a_{pp},a_{qq},a_{pq}$ and $\delta$, finds integers $P_1,P_2,Q$ with $Q>0$, $P_1^2+P_2^2=Q^2$, and $\size(Q)\le 2\ceil{\log(1/\delta)}+2S+10$, such that, letting $\hat c:=P_1/Q$, $\hat s:=P_2/Q$, $\hat J := J(p, q; \hat c, \hat s)$, and $\hat B:=\hat J^\transp A \hat J$, we have $\abs{\hat b_{pq}} \le \delta$.
The algorithm performs $O(1+\log(1/\delta)+S)$ arithmetic operations in the real RAM model.
Moreover, if $a_{pp},a_{qq},a_{pq},\delta\in\Q$, the algorithm runs on a Turing machine in time polynomial in $\size(a_{pp})+\size(a_{qq})+\size(a_{pq})+\size(\delta)$.
\end{lemma}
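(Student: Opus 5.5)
The plan is to run Lemma~\ref{lem rational Jacobi approx} with a smaller tolerance $\delta' := \delta \cdot 2^{-S}$. We then show that the error in $\hat b_{pq}$ is controlled by $2^S$ times the errors in $\hat c,\hat s$.

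First, $\delta' \in (0,1]$ because $S \ge 0$. Since $S$ is a nonnegative integer, $\ceil{\log(1/\delta')} = \ceil{\log(1/\delta)} + S$. The size bound from Lemma~\ref{lem rational Jacobi approx} therefore gives
$$
\size(Q) \le 2\ceil{\log(1/\delta)} + 2S + 10,
$$
as required, and the integers $P_1,P_2,Q$ it returns already satisfy $Q>0$ and $P_1^2+P_2^2=Q^2$.

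Next, we bound $\abs{\hat b_{pq}}$. By \eqref{eq pq}, for any cosine-sine pair the $(p,q)$ entry is
$$
b_{pq} = a_{pq}(c^2-s^2) + (a_{pp}-a_{qq})cs.
$$
For the exact pair $(c,s)$ of Lemma~\ref{lem exact Jacobi construction} this vanishes. Subtracting the two expressions gives
$$
\abs{\hat b_{pq}} \le \abs{a_{pq}}\pare{\abs{c^2-\hat c^2} + \abs{s^2-\hat s^2}} + \pare{\abs{a_{pp}}+\abs{a_{qq}}}\abs{cs-\hat c\hat s}.
$$
All of $c,s,\hat c,\hat s$ lie in $[-1,1]$. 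Lemma~\ref{lem approx bounds}.\ref{lem approx bounds square} and Lemma~\ref{lem approx bounds}.\ref{lem approx bounds prod} bound each difference by $2\delta'$. Hence
$$
\abs{\hat b_{pq}} \le 4\pare{\abs{a_{pq}}+\abs{a_{pp}}+\abs{a_{qq}}}\delta' \le 2^S \delta' = \delta .
$$
The middle inequality uses $2^S \ge 4\pare{\abs{a_{pq}}+\abs{a_{pp}}+\abs{a_{qq}}}$, which holds by the definition of $S$. When that sum is tiny, the $\max$ with $0$ still gives $2^S \ge 1$ and $\delta' \le \delta$, so the inequality holds in that case too.

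For the running time, the main point is computing $S$, and hence $\delta'$, on a Turing machine without evaluating logarithms exactly.
\begin{itemize}
\item Let $r := 4\pare{\abs{a_{pq}}+\abs{a_{pp}}+\abs{a_{qq}}}$, which is rational when the inputs are. We find $S$ as the smallest nonnegative integer with $2^S \ge r$, either by repeated doubling or by comparing numerator and denominator bit lengths.
\item In the real RAM model this takes $O(1+S)$ operations. Forming $\delta'$ is one more operation. The call to Lemma~\ref{lem rational Jacobi approx} costs $O(1+\log(1/\delta')) = O(1+\log(1/\delta)+S)$ operations.
\item On a Turing machine, $S \le \size(a_{pp})+\size(a_{qq})+\size(a_{pq}) + O(1)$, so $\size(\delta') = \size(\delta) + O(S)$ is polynomial in the input size. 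The polynomial bound of Lemma~\ref{lem rational Jacobi approx} then transfers.
\end{itemize}
The only mildly delicate step is the constant bookkeeping in the bound on $\abs{\hat b_{pq}}$, namely checking that the factor $4$ in the definition of $S$ exactly absorbs the $2\delta'$ bounds.
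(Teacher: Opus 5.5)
Your proof is correct and follows the paper's argument: you invoke Lemma~\ref{lem rational Jacobi approx} with a rescaled tolerance $\delta'$, then bound $\abs{\hat b_{pq}}$ by $4(\abs{a_{pq}}+\abs{a_{pp}}+\abs{a_{qq}})\delta'$ using Lemma~\ref{lem approx bounds}. The only difference is the choice of $\delta'$: the paper takes $\delta' := \min\{1,\delta/(4(\abs{a_{pq}}+\abs{a_{pp}}+\abs{a_{qq}}))\}$, so $S$ appears only in the analysis and never has to be computed, whereas your $\delta' := 2^{-S}\delta$ requires computing $S$ (which you correctly do by doubling) but gives the size bound with equality.
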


\begin{prf}
If $a_{pq}=0$, the algorithm returns $(P_1,P_2,Q):=(1,0,1)$: then $P_1^2+P_2^2=Q^2$, the bound on $\size(Q)$ holds trivially, and $\hat J$ is the identity, so $\hat b_{pq}=a_{pq}=0\le\delta$.
We may therefore assume $a_{pq}\neq0$.
Let $c,s$ be as defined in Lemma~\ref{lem exact Jacobi construction}, let $J:=J(p,q;c,s)$, and $B:=J^\transp A J$.
Define
$$
\delta' := \min\bra{1, \ \frac{\delta}{4\pare{\abs{a_{pq}}+\abs{a_{pp}}+\abs{a_{qq}}}}} \in (0,1]
$$
(note the denominator is positive, since $a_{pq}\neq0$).
We apply Lemma~\ref{lem rational Jacobi approx} with $\delta'$ in place of $\delta$, and obtain integers $P_1,P_2,Q$ with $Q>0$, $P_1^2+P_2^2=Q^2$, and $\size(Q)\le2\ceil{\log(1/\delta')}+10$, such that, letting $\hat c:=P_1/Q$, $\hat s:=P_2/Q$,
$$
\abs{c-\hat c}\le \delta', \qquad \abs{s - \hat s}\le \delta'.
$$
Let $\hat J := J(p,q;\hat c,\hat s)$.

Consider the entries in rows and columns $p,q$ in the matrix products $B=J^\transp A J$ and $\hat B=\hat J^\transp A \hat J$:
\begin{align*}
\begin{bmatrix}
b_{p p} & b_{p q} \\
b_{q p} & b_{q q}
\end{bmatrix}
& =
\begin{bmatrix}
c & s \\
-s & c
\end{bmatrix}^\transp
\begin{bmatrix}
a_{p p} & a_{p q} \\
a_{q p} & a_{q q}
\end{bmatrix}
\begin{bmatrix}
c & s \\
-s & c
\end{bmatrix}, \\
\begin{bmatrix}
\hat b_{p p} & \hat b_{p q} \\
\hat b_{q p} & \hat b_{q q}
\end{bmatrix}
& =
\begin{bmatrix}
\hat c & \hat s \\
-\hat s & \hat c
\end{bmatrix}^\transp
\begin{bmatrix}
a_{p p} & a_{p q} \\
a_{q p} & a_{q q}
\end{bmatrix}
\begin{bmatrix}
\hat c & \hat s \\
-\hat s & \hat c
\end{bmatrix}.
\end{align*}
Hence,
\begin{align*}
b_{p q} & = 0 = a_{p q}\pare{c^2-s^2}+\pare{a_{p p}-a_{q q}} c s, \\
\hat b_{p q} & = a_{p q}\pare{\hat c^2-\hat s^2}+\pare{a_{p p}-a_{q q}} \hat c \hat s.
\end{align*}
Taking the difference of the two equations,
$$
\hat b_{p q} = a_{p q}\pare{(\hat c^2 -c^2)-(\hat s^2-s^2)}+\pare{a_{p p}-a_{q q}} (\hat c \hat s - cs).
$$
Since $c,s,\hat c,\hat s \in [-1,1]$, we can apply Lemma~\ref{lem approx bounds}.\ref{lem approx bounds square} and Lemma~\ref{lem approx bounds}.\ref{lem approx bounds prod} (with $\delta:=\delta'$) to obtain
\begin{align*}
\abs{\hat b_{p q}} 
& \le \abs{a_{p q}}\pare{\abs{\hat c^2 -c^2}+\abs{\hat s^2-s^2}}+\abs{a_{p p}-a_{q q}} \abs{\hat c \hat s - cs} \\
& \le 4 \delta' \abs{a_{p q}}+ 2 \delta' \abs{a_{p p}-a_{q q}} \\
& \le \delta' \pare{4\abs{a_{pq}}+2\abs{a_{pp}}+2\abs{a_{qq}}} \\
& \le \delta' \cdot 4 \pare{\abs{a_{pq}}+\abs{a_{pp}}+\abs{a_{qq}}}.
\end{align*}
By definition of $\delta'$, we have $\delta' \cdot 4\pare{\abs{a_{pq}}+\abs{a_{pp}}+\abs{a_{qq}}} \le \delta$: this is an equality if $\delta' < 1$, and otherwise $\delta'=1$ and $4\pare{\abs{a_{pq}}+\abs{a_{pp}}+\abs{a_{qq}}} \le \delta$. Therefore
$$
\abs{\hat b_{pq}} \le \delta' \cdot 4\pare{\abs{a_{pq}}+\abs{a_{pp}}+\abs{a_{qq}}} \le \delta.
$$
Furthermore, since $1/\delta' = \max\bra{1, \ 4\pare{\abs{a_{pq}}+\abs{a_{pp}}+\abs{a_{qq}}}/\delta}$ and $S \ge \log\pare{4\pare{\abs{a_{pq}}+\abs{a_{pp}}+\abs{a_{qq}}}}$, we have $\log(1/\delta') \le S + \log(1/\delta)$, and hence, $S$ being an integer, $\ceil{\log(1/\delta')} \le S + \ceil{\log(1/\delta)}$; the bound $\size(Q)\le2\ceil{\log(1/\delta)}+2S+10$ follows.

\smallskip
\noindent
\textbf{Running time.}
Besides invoking Lemma~\ref{lem rational Jacobi approx} with $\delta'$, the algorithm only computes $\delta'$ from $a_{pp}$, $a_{qq}$, $a_{pq}$, and $\delta$, which takes $O(1)$ arithmetic operations; since $\log(1/\delta')\le S+\log(1/\delta)$, the total is $O(1+\log(1/\delta')) = O(1+\log(1/\delta)+S)$, establishing the real RAM bound.
Now suppose $a_{pp},a_{qq},a_{pq},\delta\in\Q$. Then $\delta'$ is rational of size polynomial in $\size(a_{pp})+\size(a_{qq})+\size(a_{pq})+\size(\delta)$, and by Lemma~\ref{lem rational Jacobi approx} the entire algorithm runs on a Turing machine in time polynomial in $\size(a_{pp})+\size(a_{qq})+\size(a_{pq})+\size(\delta')$, hence in $\size(a_{pp})+\size(a_{qq})+\size(a_{pq})+\size(\delta)$.
\end{prf}

\subsubsection{Repeated approximate updates}

Let $A \in \R^{n \times n}$ be symmetric, let $(p, q)$ be an index pair that satisfies $1 \le p<q \le n$ and $|a_{p q}| = \max_{i \neq j}|a_{ij}|$, and let $\delta \in (0,1]$.
We say that $\hat B \in \R^{n\times n}$ is obtained by applying a \emph{$\delta$-approximate Jacobi update} to $A$ if $\hat B = \hat J^\transp A \hat J$ for some cosine-sine pair $(\hat c,\hat s)$, where $\hat J:=J(p,q;\hat c,\hat s)$, such that $\abs{\hat b_{pq}} \le \delta$.
A $\delta$-approximate Jacobi update is an approximate version of a Jacobi update, where the exact condition $b_{pq}=0$ is relaxed to $\abs{\hat b_{pq}}\le \delta$.
Such a pair $(\hat c,\hat s)$ can be computed via Lemma~\ref{lem rational Jacobi construction}.

The next two lemmas give approximate versions of Lemma~\ref{lem exact Jacobi sweep}: exact Jacobi updates are replaced with approximate ones, and correspondingly the constant $N$ in the contraction factor $1-1/N$ changes from $\frac{n(n-1)}2$ to $\frac23n(n-1)$, to compensate for the weaker per-step contraction that Lemma~\ref{lem Jacobi norm} yields when $\hat b_{pq}$ is only small rather than zero. Unlike Lemma~\ref{lem exact Jacobi sweep}, whose conclusion is stated in terms of $\off(A^{(k)})$, we state the first lemma in terms of individual off-diagonal entries, since this is the natural form here: $(\delta/2)$-approximate Jacobi updates are precisely what is needed to obtain an entrywise bound of $\delta$. 
The subsequent lemma then recovers a conclusion in terms of $\off(\hat A^{(k)})$, the form needed to match Proposition~\ref{prop exact Jacobi near diag} and to serve as input to Theorem~\ref{th rational Jacobi near diag}.

\begin{lemma}
\label{lem rational Jacobi sweep}
Let $A \in \R^{n \times n}$ be symmetric and let $\delta \in (0,1]$.
For every nonnegative integer $k$, let $\hat A^{(k)}$ denote a matrix obtained from $A$ by $k$ successive $(\delta/2)$-approximate Jacobi updates (with $\hat A^{(0)}:=A$).
Then, there exists 
$$
k \le \max\pare{0, \ \left\lceil \frac23 n(n-1) \ln\pare{\frac{\off(A)^2}{\delta^2}}\right\rceil}
$$
such that all off-diagonal entries of $\hat A^{(k)}$ are at most $\delta$ in absolute value.
\end{lemma}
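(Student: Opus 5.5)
The approach mirrors the proof of Lemma~\ref{lem exact Jacobi sweep}, adapted so that the error $2\hat b_{pq}^2$ in Lemma~\ref{lem Jacobi norm} is absorbed into the contraction, as long as the pivot is still large. Let $N' := \frac23 n(n-1)$ and $\xi := \ln(\off(A)^2/\delta^2)$. If $\off(A) < \delta$, then every off-diagonal entry of $A$ is at most $\delta$ in absolute value and $k=0$ works. Otherwise $\xi \ge 0$, and we set $K := \ceil{N'\xi}$. We argue by contradiction: suppose that for every $k \in \{0,\dots,K\}$ the matrix $\hat A^{(k)}$ has an off-diagonal entry of absolute value greater than $\delta$.

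The key per-step estimate is the following. Consider any $k < K$ and let $a_{pq}$ denote the pivot of $\hat A^{(k)}$, which is an entry of largest absolute value, so $\abs{a_{pq}} > \delta$. The update guarantees $\abs{\hat b_{pq}} \le \delta/2 < \abs{a_{pq}}/2$, hence $2\hat b_{pq}^2 \le \frac12 a_{pq}^2$. Lemma~\ref{lem Jacobi norm} then gives
$$
\off(\hat A^{(k+1)})^2 \le \off(\hat A^{(k)})^2 - \tfrac32 a_{pq}^2 .
$$
Since $\off(\hat A^{(k)})^2 \le n(n-1)a_{pq}^2$, we have $\frac32 a_{pq}^2 \ge \frac{3}{2n(n-1)}\off(\hat A^{(k)})^2 = \frac1{N'}\off(\hat A^{(k)})^2$. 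Therefore $\off(\hat A^{(k+1)})^2 \le (1-1/N')\off(\hat A^{(k)})^2$. Note that $N' \ge 4/3 > 1$, so the factor lies in $(0,1)$.

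Iterating this estimate, exactly as in Lemma~\ref{lem exact Jacobi sweep} with $1-1/N' \le e^{-1/N'}$, yields $\off(\hat A^{(K)})^2 \le e^{-\xi}\off(A)^2 = \delta^2$. Hence every off-diagonal entry of $\hat A^{(K)}$ has absolute value at most $\off(\hat A^{(K)}) \le \delta$ (each such entry appears, twice, in the sum defining $\off$). This contradicts our assumption at $k=K$. So some $k \le K$ has all off-diagonal entries at most $\delta$, which matches the claimed bound.

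The only delicate point is the per-step contraction. It holds only while the pivot exceeds $\delta$, which is exactly why the conclusion is entrywise and why the argument runs by contradiction, or equivalently by stopping at the first $k$ at which the pivot is at most $\delta$. The constant $\frac23$ comes precisely from $2 - \frac12 = \frac32$.
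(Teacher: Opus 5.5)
Your proof is correct and is essentially the paper's argument: the same constant $\frac23 n(n-1)$, the same per-step bound that absorbs $2\hat b_{pq}^2 \le \frac12 a_{pq}^2$ using $\abs{a_{pq}}>\delta$, and the same iteration up to $K=\lceil \frac23 n(n-1)\,\xi\rceil$. The only difference is presentation: the paper splits directly into ``some $k\le K-1$ already works'' versus ``the pivot exceeds $\delta$ at every step before $K$,'' while you argue by contradiction.
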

\begin{prf}
Let $N:=\frac23 n(n-1)$ and $\xi := \ln\pare{\off(A)^2/\delta^2}$.
If $\off(A)^2 < \delta^2$, then every off-diagonal entry of $A$ has absolute value at most $\delta$, and $k=0$ satisfies the claim. We may therefore assume $\off(A)^2 \ge \delta^2$, so that $\xi\ge0$.
Let $K := \ceil{N\xi}$, a nonnegative integer.
If there exists $k \le K-1$ such that all off-diagonal entries of $\hat A^{(k)}$ are at most $\delta$ in absolute value we are done.
We may therefore assume that for each $k \le K-1$ there is at least one off-diagonal entry of $\hat A^{(k)}$ that is larger than $\delta$ in absolute value.
We show that all off-diagonal entries of $\hat A^{(K)}$ are at most $\delta$ in absolute value.
Let $a_{p q}$ be the off-diagonal entry of $A$ with largest absolute value. Then
$$
\off(A)^2 \le n(n-1) a_{p q}^2 = \frac32 N a_{p q}^2.
$$
Since a $(\delta/2)$-approximate Jacobi update satisfies $\abs{\hat b_{pq}} \le \delta/2$, Lemma~\ref{lem Jacobi norm} gives
$$
\off(\hat A^{(1)})^2 
= \off(A)^2- 2 a_{p q}^2 + 2 \hat b_{p q}^2
\le \off(A)^2- 2 a_{p q}^2 + \frac{\delta^2}{2}
\le \off(A)^2- \frac32 a_{p q}^2
\le \pare{1-\frac{1}{N}} \off(A)^2,
$$
where the second inequality uses $2 \hat b_{p q}^2 \le 2 \pare{\delta/2}^2 = \delta^2/2$, the third uses $\abs{a_{pq}} > \delta$, which gives $\delta^2/2 \le a_{p q}^2/2$, and the last uses $\off(A)^2 \le \frac32 N a_{p q}^2$.
Since for every $k \le K-1$ the off-diagonal entry of $\hat A^{(k)}$ with largest absolute value has absolute value larger than $\delta$, by induction,
$$
\off(\hat A^{(K)})^2 \le\pare{1-\frac{1}{N}}^{K} \off(A)^2.
$$
Since $K \ge N\xi$ and $0<1-\frac1N<1$, we have $\pare{1-\frac1N}^K \le \pare{1-\frac1N}^{N\xi}$.
Since $1-\frac1N \le e^{-1/N}$, we also have $\pare{1-\frac1N}^{N\xi} \le e^{-\xi}$.
Thus, 
$$
\off(\hat A^{(K)})^2 \le \pare{1-\frac1N}^K\off(A)^2 \le e^{-\xi} \off(A)^2
= \frac{\delta^2}{\off(A)^2}\off(A)^2
=\delta^2.
$$
Therefore, each off-diagonal entry of $\hat A^{(K)}$ is, in absolute value, at most $\delta$, and $k=K$ satisfies the claim.
\end{prf}

\begin{lemma}
\label{lem rational Jacobi sweep off}
Let $A \in \R^{n \times n}$ be symmetric and let $\delta \in (0,1]$.
For every nonnegative integer $k$, let $\hat A^{(k)}$ denote a matrix obtained from $A$ by $k$ successive $\pare{\delta/(2n)}$-approximate Jacobi updates (with $\hat A^{(0)}:=A$).
Then, there exists 
$$
k \le \max\pare{0, \ \left\lceil \frac23 n(n-1) \ln\pare{\frac{n^2\off(A)^2}{\delta^2}}\right\rceil}
$$
such that $\off(\hat A^{(k)}) \le \delta$.
\end{lemma}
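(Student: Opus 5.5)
The plan is to reduce the statement to Lemma~\ref{lem rational Jacobi sweep} by applying it with the tolerance $\delta/n$ in place of $\delta$. Setting $\delta'' := \delta/n$, we have $\delta'' \in (0,1]$ because $n \ge 2$ and $\delta \le 1$. Moreover, a $(\delta/(2n))$-approximate Jacobi update is exactly a $(\delta''/2)$-approximate Jacobi update. So the matrices $\hat A^{(k)}$ in the statement form a sequence to which Lemma~\ref{lem rational Jacobi sweep} applies verbatim with $\delta''$.

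That lemma yields some
$$
k \le \max\pare{0, \ \left\lceil \frac23 n(n-1) \ln\pare{\frac{\off(A)^2}{(\delta'')^2}}\right\rceil}
= \max\pare{0, \ \left\lceil \frac23 n(n-1) \ln\pare{\frac{n^2\off(A)^2}{\delta^2}}\right\rceil}
$$
such that every off-diagonal entry of $\hat A^{(k)}$ has absolute value at most $\delta/n$. This is precisely the bound on $k$ claimed in the statement.

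It remains to pass from the entrywise bound to a bound on $\off(\cdot)$. There are $n(n-1)$ off-diagonal entries, so
$$
\off(\hat A^{(k)})^2 \le n(n-1)\frac{\delta^2}{n^2} \le \delta^2,
$$
and hence $\off(\hat A^{(k)}) \le \delta$, as required.

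I expect no real obstacle here. The only points to check are that $\delta/n$ lies in $(0,1]$, so the hypothesis of Lemma~\ref{lem rational Jacobi sweep} is met, and that the update tolerances coincide exactly, since $(\delta/n)/2 = \delta/(2n)$. In particular, the pivot rule (largest-magnitude off-diagonal entry) is built into the definition of an approximate update, so it carries over unchanged.
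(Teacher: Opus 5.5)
Your proposal is correct and follows the paper's own proof essentially verbatim: apply Lemma~\ref{lem rational Jacobi sweep} with $\delta/n$ in place of $\delta$, so that its $(\delta/n)/2$-approximate updates are exactly the $\delta/(2n)$-approximate updates of the statement. Then convert the entrywise bound $\delta/n$ into $\off(\hat A^{(k)})^2 \le n(n-1)\delta^2/n^2 \le \delta^2$.
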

\begin{prf}
Note that $\delta/n \in (0,1]$, since $n \ge 1$.
By Lemma~\ref{lem rational Jacobi sweep} (applied with $\delta/n$ in place of $\delta$, so that its updates are exactly the $\pare{\delta/(2n)}$-approximate ones considered here), there exists
$$
k \le \max\pare{0, \ \left\lceil \frac23 n(n-1) \ln\pare{\frac{n^2\off(A)^2}{\delta^2}}\right\rceil}
$$
such that all off-diagonal entries of $\hat A^{(k)}$ are at most $\delta/n$ in absolute value. Since $\off(\hat A^{(k)})^2$ sums the squares of at most $n(n-1)$ off-diagonal entries, each at most $\delta^2/n^2$,
$$
\off(\hat A^{(k)})^2 \le n(n-1) \frac{\delta^2}{n^2} = \frac{n-1}{n}\,\delta^2 \le \delta^2.
$$
Hence $\off(\hat A^{(k)})\le\delta$, as claimed.
\end{prf}

\subsubsection{Near-diagonalization via rational rotations}

Unlike Proposition~\ref{prop exact Jacobi near diag}, which relies on exact Jacobi rotations and hence cannot be implemented on a Turing machine, the following theorem assembles Lemmas~\ref{lem rational Jacobi construction} and~\ref{lem rational Jacobi sweep off} into an algorithm that drives a symmetric matrix arbitrarily close to diagonal form via rational Jacobi rotations. Unlike Proposition~\ref{prop exact Jacobi near diag}, the output $L$ is rational even when $A$ is only real; when $A$ is rational, the algorithm runs in time polynomial on a Turing machine. This result is the main algorithmic building block for the remainder of the paper.

\begin{theorem}[Near-diagonalization via rational Jacobi rotations]
\label{th rational Jacobi near diag}
Let $A \in \R^{n \times n}$ be symmetric and let $\delta \in (0,1]$.
There is an algorithm that computes an orthogonal matrix $L \in \Q^{n \times n}$ such that $\off(L^\transp A L) \le \delta$.
The algorithm performs $O\pare{n^4\pare{\log(1/\delta)+\log n+\log(\max(1,\Fnorm{A}))}^2}$ arithmetic operations in the real RAM model.
Moreover, if $A\in\Q^{n\times n}$ and $\delta$ is rational, the algorithm runs on a Turing machine in time polynomial in $\size(A) + \size(\delta)$.
\end{theorem}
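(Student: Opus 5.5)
The algorithm mirrors the proof of Proposition~\ref{prop exact Jacobi near diag}, with exact updates replaced by $(\delta/(2n))$-approximate ones. Set $\hat A^{(0)}:=A$ and $L^{(0)}:=I$. At step $t$, first test whether $\off(\hat A^{(t)})\le\delta$. Over the rationals this is done by comparing $\off(\hat A^{(t)})^2$ with $\delta^2$, so no square root is needed. If the test fails, do the following:
\begin{itemize}
\item pick a pivot $(p,q)$ with $\abs{\hat a^{(t)}_{pq}}$ maximal;
\item call Lemma~\ref{lem rational Jacobi construction} on $\hat a^{(t)}_{pp},\hat a^{(t)}_{qq},\hat a^{(t)}_{pq}$ with tolerance $\delta/(2n)\in(0,1]$, obtaining a rational rotation $\hat J_{t+1}$;
\item set $\hat A^{(t+1)}:=\hat J_{t+1}^\transp\hat A^{(t)}\hat J_{t+1}$ and $L^{(t+1)}:=L^{(t)}\hat J_{t+1}$.
\end{itemize}
Each $\hat J_{t+1}$ is exactly orthogonal with rational entries, so $L$ is rational and orthogonal and $L^\transp A L=\hat A^{(k)}$. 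By Lemma~\ref{lem rational Jacobi sweep off}, the test succeeds after some
\[
k\le \left\lceil \tfrac23 n(n-1)\ln\!\pare{n^2\off(A)^2/\delta^2}\right\rceil = O\pare{n^2\pare{\log(1/\delta)+\log n+\log(\max(1,\Fnorm A))}},
\]
using $\off(A)\le\Fnorm A$. Correctness is therefore immediate.

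\textbf{Real RAM count.} Orthogonality gives $\Fnorm{\hat A^{(t)}}=\Fnorm A$ for every $t$. Hence $\abs{\hat a_{pq}}+\abs{\hat a_{pp}}+\abs{\hat a_{qq}}\le 3\Fnorm A$, and the parameter $S$ of Lemma~\ref{lem rational Jacobi construction} is $O(1+\log(\max(1,\Fnorm A)))$. Each step then costs
\[
O\pare{n^2} + O\pare{1+\log(2n/\delta)+S} = O\pare{n^2+\log(1/\delta)+\log n+\log\max(1,\Fnorm A)}.
\]
The first term covers the pivot search, the test, and the updates of the two rows and columns of $\hat A$ and the two columns of $L$; the second covers the rotation construction. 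Multiplying by $k$ stays within the stated $O(n^4(\cdots)^2)$ bound.

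\textbf{Main obstacle: Turing-machine complexity.} The iteration count is polynomial, so the issue is bit-size growth. A naive bound multiplies sizes over up to $k$ rotations and could blow up, so I would track denominators instead. Write $\hat J_t=\tilde J_t/Q_t$ with $\tilde J_t$ integral. Then $L^{(t)}$ has entries that are integers divided by $D_t:=Q_1\cdots Q_t$. By Lemma~\ref{lem rational Jacobi construction}, $\size(Q_t)=O(\log(1/\delta)+\log n+S)$, and $S=O(\size(A))$ because $\Fnorm A\le 2^{\size(A)}$. Thus $\log D_t=O(k(\size(A)+\size(\delta)+\log n))$, which is polynomial.

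Orthogonality of $L^{(t)}$ bounds its entries by $1$ in absolute value, so the numerators are at most $D_t$. Hence every entry of $L^{(t)}$ has polynomial size.

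For the entries of $\hat A^{(t)}=L^{(t)\transp}AL^{(t)}$, let $\Delta$ be the least common denominator of the entries of $A$. Every entry of $\hat A^{(t)}$ is then an integer over $D_t^2\Delta$, and its absolute value is at most $\Fnorm A$, so these sizes are polynomial as well.

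Consequently each comparison, each arithmetic update, and each call to Lemma~\ref{lem rational Jacobi construction} runs in polynomial time. That call takes polynomial time in the sizes of its inputs, and $\size(\delta/(2n))=O(\size(\delta)+\log n)$. With polynomially many steps, the whole algorithm is polynomial in $\size(A)+\size(\delta)$, where $n\le\size(A)$.
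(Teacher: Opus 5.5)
Your proposal is correct and follows the paper's proof essentially step for step. It uses the same $(\delta/(2n))$-approximate updates built with Lemma~\ref{lem rational Jacobi construction}, the iteration bound from Lemma~\ref{lem rational Jacobi sweep off}, Frobenius-norm invariance to bound $S$, and common denominators formed from the $Q_t$ for the Turing-machine analysis; the only differences are minor and slightly tighten the argument: you make the stopping test explicit (the paper leaves it implicit), and you bound numerators directly via $\abs{L^{(t)}_{ij}}\le 1$ and $\abs{\hat a^{(t)}_{ij}}\le\Fnorm{A}$ instead of by the paper's inductive size-growth bound on $\tilde A_t$.
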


\begin{prf}
From Lemma~\ref{lem rational Jacobi sweep off}, there exists
$$
k \le \max\pare{0,\ \left\lceil \frac23 n(n-1) \ln\pare{\frac{n^2\off(A)^2}{\delta^2}}\right\rceil},
$$
such that after $k$ successive $\pare{\delta/(2n)}$-approximate Jacobi updates, the obtained matrix $\hat A^{(k)}$ satisfies $\off(\hat A^{(k)})\le\delta$.
Let $\hat J_1,\dots,\hat J_k$ be the corresponding Jacobi rotations, each computed via Lemma~\ref{lem rational Jacobi construction} with tolerance $\delta/(2n)$, and define $L := \hat J_1 \cdots \hat J_k$ (with $L:=I$ if $k=0$), so that $L^\transp A L = \hat A^{(k)}$.
As a product of orthogonal matrices, $L$ is orthogonal; moreover $L\in\Q^{n\times n}$, since each $\hat J_i$ has rational entries (Lemma~\ref{lem rational Jacobi construction}), regardless of whether $A$ is rational.

\smallskip
\noindent
\textbf{Running time.}
We first bound the entries encountered along the sequence. For each $t\le k-1$, since $\hat J_1,\dots,\hat J_t$ are orthogonal we have $\Fnorm{\hat A^{(t)}}=\Fnorm A$; as the squares $a_{pp}^2,a_{qq}^2,a_{pq}^2$ of the entries of $\hat A^{(t)}$ used to construct $\hat J_{t+1}$ are terms of $\Fnorm{\hat A^{(t)}}^2$, each of $\abs{a_{pp}},\abs{a_{qq}},\abs{a_{pq}}$ is at most $\Fnorm A$. Hence the quantity $S$ of Lemma~\ref{lem rational Jacobi construction} at step $t+1$, namely $S_{t+1}:=\max\bra{0,\ceil{\log(4(\abs{a_{pp}}+\abs{a_{qq}}+\abs{a_{pq}}))}}$, satisfies $S_{t+1}\le S^*:=\max\bra{0,\ceil{\log(12\Fnorm A)}}=O(1+\log(\max(1,\Fnorm A)))$, using $\abs{a_{pp}}+\abs{a_{qq}}+\abs{a_{pq}}\le3\Fnorm A$. Write $\sigma := \log(2n/\delta)+\log(\max(1,\Fnorm A))$. By Lemma~\ref{lem rational Jacobi sweep off} and $\off(A)\le\Fnorm A$,
$$
k = O\pare{n^2 \sigma}.
$$

Note that $S^*=O(\sigma)$. At each step $t$, finding the pivot takes $O(n^2)$ comparisons, constructing $\hat J_{t+1}$ takes $O(\log(2n/\delta)+S_{t+1})=O(\sigma)$ arithmetic operations (Lemma~\ref{lem rational Jacobi construction} with tolerance $\delta/(2n)$), and forming $\hat A^{(t+1)}=\hat J_{t+1}^\transp\hat A^{(t)}\hat J_{t+1}$ takes $O(n^2)$ more, since only rows and columns $p,q$ change, for $O(n^2+\sigma)$ per step. Since $\sigma\ge1$, the total is
$$
O\pare{k(n^2+\sigma)} = O\pare{n^2\sigma(n^2+\sigma)} = O\pare{n^4\sigma^2};
$$
substituting the definition of $\sigma$ gives the claimed bound
$$
O\pare{n^4\pare{\log(1/\delta)+\log n+\log(\max(1,\Fnorm A))}^2}
$$
and establishes the real RAM bound.

Now suppose $A\in\Q^{n\times n}$ and $\delta$ is rational. Let $\deno_0$ be the product of the denominators of the entries of $A$ and $\tilde A_0:=\deno_0A\in\Z^{n\times n}$, so $\size(\deno_0),\ \max_{i,j}\size((\tilde A_0)_{ij})=O(\size(A))$. By Lemma~\ref{lem rational Jacobi construction} (with tolerance $\delta/(2n)$), we may write $\hat J_t=\tilde J_t/Q_t$ with $Q_t\in\Z_{>0}$, $\size(Q_t)=O(\sigma)$, and $\tilde J_t\in\Z^{n\times n}$ satisfying $\abs{(\tilde J_t)_{ij}}\le Q_t$. Setting $\tilde A_t:=\tilde J_t^\transp\tilde A_{t-1}\tilde J_t\in\Z^{n\times n}$ and $\deno_t:=Q_t^2\deno_{t-1}$, we have $\hat A^{(t)}=\tilde A_t/\deno_t$ for every $t$. Since $\Fnorm A=2^{O(\size(A))}$, $n^2=O(\size(A))$, and $\log(2n/\delta)=O(\size(A)+\size(\delta))$, both $\sigma$ and $k=O(n^2\sigma)$ are polynomial in $\size(A)+\size(\delta)$. Since the size of a product of positive integers is at most the sum of their sizes up to an additive $O(1)$, and $\tilde A_t$ is obtained from $\tilde A_{t-1}$ by $O(1)$ integer additions and multiplications involving entries of $\tilde J_t$ (each of size $O(\sigma)$), induction on $t\le k$ gives
$$
\size(\deno_t),\ \max_{i,j}\size((\tilde A_t)_{ij}) = O\pare{\size(A)+k\sigma},
$$
which is polynomial in $\size(A)+\size(\delta)$. Hence every entry of $\hat A^{(t)}=\tilde A_t/\deno_t$ has size polynomial in $\size(A)+\size(\delta)$, so each of the $k$ steps (finding the pivot, constructing $\hat J_{t+1}$, and forming $\tilde A_{t+1}$ and $\deno_{t+1}$) runs on a Turing machine in time polynomial in $\size(A)+\size(\delta)$. As $k$ is also polynomial in $\size(A)+\size(\delta)$, the entire algorithm runs on a Turing machine in time polynomial in $\size(A)+\size(\delta)$.
\end{prf}

\section{Polynomial-time diagonalization}
\label{sec diagonalization}

The goal of this section is to compute, in polynomial time, an orthogonal matrix that diagonalizes a symmetric rational matrix up to a prescribed precision, while preserving its inertia (Theorem~\ref{th diagonalize}); we then extend this to the simultaneous diagonalization of a symmetric matrix and a positive definite matrix (Theorem~\ref{th two matrices}), and to simultaneous diagonalization with respect to a rational polytope (Theorem~\ref{th simultaneous diagonalization}). The main obstacle is that a symmetric matrix may have eigenvalues arbitrarily close to $0$, which could cause a small perturbation to change the inertia; we first show (Lemmas~\ref{lem char 1}--\ref{lem noise}) that this cannot happen for rational matrices, since their nonzero eigenvalues are bounded away from $0$ by an amount that depends only on the size of the matrix.

\subsection{Eigenvalue gaps}

Consider an $n \times n$ matrix $A$.
The \emph{characteristic polynomial} of $A$, denoted by ${\displaystyle p_{A}(t),}$ is the polynomial defined by
$$
p_A(t)=\det(t I-A).
$$ 
It is well known that the roots of $p_A(t)$ are exactly the eigenvalues of $A$, and that, if $A$ is symmetric, all eigenvalues of $A$ are real.
Throughout the paper, for a symmetric matrix $A\in\R^{n\times n}$, we denote by $\lambda_k(A)$ its $k$-th largest eigenvalue, so that $\lambda_1(A)\ge\lambda_2(A)\ge\cdots\ge\lambda_n(A)$ (counted with multiplicity); when the matrix is clear from the context, we simply write $\lambda_1,\dots,\lambda_n$.
In particular, if $A$ is symmetric,
$$
p_A(t) = (t-\lambda_1)(t-\lambda_2)\cdots(t-\lambda_n).
$$
For a polynomial $p(t)$, we denote by $\pnorm{p}$ the maximum absolute value of the coefficients of $p$.
We first bound the coefficients of the characteristic polynomial of a symmetric matrix in terms of $n$ and its Frobenius norm.

\begin{lemma}
\label{lem char 1}
Let $A \in \R^{n \times n}$ be symmetric and let $p_A(t)$ be its characteristic polynomial.
Then, 
$$
\pnorm{p_A} \le 2^n \max(1,\Fnorm{A})^n.
$$
Furthermore, if $A \in \Z^{n \times n}$, then the coefficients of $p_A$ are integer.
\end{lemma}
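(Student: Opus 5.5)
Since $A$ is symmetric, the plan is to expand the characteristic polynomial through the eigenvalues and use the elementary symmetric functions. Write $p_A(t)=\prod_{i=1}^n (t-\lambda_i)=\sum_{k=0}^n (-1)^k e_k(\lambda_1,\dots,\lambda_n)\, t^{n-k}$, where $e_k$ is the $k$-th elementary symmetric polynomial. Each coefficient of $p_A$ is thus $\pm e_k$, so it suffices to bound $\abs{e_k}$ for every $k$.

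The key fact is that every eigenvalue satisfies $\abs{\lambda_i}\le\norm{A}\le\Fnorm{A}$, since for symmetric $A$ the spectral norm equals $\max_i\abs{\lambda_i}$. Set $\mu:=\max(1,\Fnorm{A})$. Then $e_k$ is a sum of $\binom nk$ products of $k$ eigenvalues, each of absolute value at most $\Fnorm{A}^k\le\mu^k\le\mu^n$. It follows that
$$
\abs{e_k}\le\binom nk\mu^n\le 2^n\mu^n,
$$
using $\binom nk\le\sum_j\binom nj=2^n$. This gives $\pnorm{p_A}\le 2^n\max(1,\Fnorm{A})^n$.

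For the integrality claim, I will not use the eigenvalues. Instead I expand $\det(tI-A)$ by the Leibniz formula: it is a signed sum of products of entries of $tI-A$, each entry being either $-a_{ij}$ or $t-a_{ii}$. If $A\in\Z^{n\times n}$, every term is therefore a polynomial in $t$ with integer coefficients, and so is their sum.

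I do not expect a real obstacle. The only point needing care is justifying $\abs{\lambda_i}\le\Fnorm{A}$, which follows from $\norm{A}\le\Fnorm{A}$ (recalled earlier) together with $\norm{A}=\max_i\abs{\lambda_i}$ for symmetric matrices, via an orthogonal diagonalization.
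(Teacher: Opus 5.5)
Your proposal is correct and follows essentially the same route as the paper: expand $p_A$ through the eigenvalues, bound each of the $\binom{n}{k}$ products by $\max(1,\Fnorm{A})^n$, use $\binom{n}{k}\le 2^n$, and read integrality off the determinant expansion. The only cosmetic difference is that the paper justifies $\abs{\lambda_j}\le\Fnorm{A}$ via $\Fnorm{A}^2=\sum_i\lambda_i^2$, whereas you go through $\norm{A}=\max_i\abs{\lambda_i}\le\Fnorm{A}$; both are fine.
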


\begin{prf}
It follows directly from the definition of the characteristic polynomial that, if $A \in \Z^{n \times n}$, then the coefficients of $p_A$ are integer.
Let $\lambda_1, \dots, \lambda_n \in \R$ be the eigenvalues of $A$. As noted above,
$$
p_A(t) = t^n+a_{n-1} t^{n-1}+ \cdots +a_1 t + a_0 = (t-\lambda_1) (t-\lambda_2) \cdots (t-\lambda_n).
$$
For $i=0,\dots,n-1$, the number $a_i$ is then
$$
a_i = \sum_{\mathcal I \subseteq \{1,\dots,n\} : \abs{\mathcal I} = i} \prod_{j \notin \mathcal I} (- \lambda_j).
$$
Since $|\lambda_j|\le\Fnorm{A}\le\max(1,\Fnorm A)$ for every $j$ (as $\Fnorm{A}^2=\sum_i\lambda_i^2$, so each $\lambda_j^2$ is one term in this sum), each of the $\binom{n}{i}$ terms in the sum satisfies
$$
\abs{\prod_{j\notin \mathcal I}(-\lambda_j)} = \prod_{j\notin \mathcal I}\abs{\lambda_j} \le \max(1,\Fnorm A)^{n-i} \le \max(1,\Fnorm{A})^n,
$$
where the last inequality holds since $\max(1,\Fnorm A)\ge1$ and $n-i\le n$.
Hence,
\[
\abs{a_i} \le \binom{n}{i}\max(1,\Fnorm A)^n \le 2^n \max(1,\Fnorm{A})^n. 
\]
The leading coefficient of $p_A$ is $1$, which also satisfies this bound.
\end{prf}

Using the bound in Lemma~\ref{lem char 1}, we now show that every nonzero eigenvalue of an integer symmetric matrix is bounded away from $0$, by an amount that depends only on $n$ and the Frobenius norm of the matrix.
We will be using the following lemma, due to Cauchy~\cite{Cau29}, and taken from Mignotte~\cite{Mig82}.

\begin{lemma}[\cite{Cau29,Mig82}]
\label{lem Cauchy}
Let $q(t) \in \R[t]$ be the nonzero polynomial
$$
q(t)=b_d t^d+\cdots+b_1 t+b_0.
$$
Let $\lambda$ be a complex root of $q$. Then
$$
\abs{\lambda} \ge \frac{\abs{b_0}}{\abs{b_0}+\max\pare{\abs{b_1}, \dots, \abs{b_d}}}.
$$
\end{lemma}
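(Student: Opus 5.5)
The plan is to reduce to the case of a root of small modulus and then use the polynomial equation to force $\abs{b_0}$ to be small.

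If $b_0=0$, the right-hand side is $0$ (the denominator $\max(\abs{b_1},\dots,\abs{b_d})$ is positive, since $q\neq0$), and there is nothing to prove. So assume $b_0\neq0$; then $\lambda\neq0$. Write $B:=\max(\abs{b_1},\dots,\abs{b_d})$. If $\abs{\lambda}\ge1$, the claim is immediate, because the right-hand side is at most $1$. We may therefore assume $\abs{\lambda}<1$.

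The key step is to evaluate the equation $q(\lambda)=0$ in the form
$$
-b_0=b_1\lambda+b_2\lambda^2+\cdots+b_d\lambda^d.
$$
Taking absolute values and using $\abs{\lambda}<1$ to sum the geometric series gives
$$
\abs{b_0}\le B\pare{\abs{\lambda}+\abs{\lambda}^2+\cdots}=B\frac{\abs{\lambda}}{1-\abs{\lambda}}.
$$
Rearranging yields $\abs{b_0}(1-\abs{\lambda})\le B\abs{\lambda}$, that is, $\abs{b_0}\le(\abs{b_0}+B)\abs{\lambda}$. This is exactly the claimed bound, since $\abs{b_0}+B>0$.

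The only delicate point is the case split ensuring $\abs{\lambda}<1$, so that the geometric series converges. Otherwise the argument is a direct estimate, essentially Cauchy's upper bound on root moduli applied to the reversed polynomial $t^d q(1/t)$. I expect no real obstacle here.
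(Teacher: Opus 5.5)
Your proof is correct. The case splits ($b_0=0$, where the right-hand side vanishes, and $\abs{\lambda}\ge 1$) are handled properly, and the geometric-series estimate applied to $-b_0=\sum_{j=1}^d b_j\lambda^j$ yields $\abs{b_0}\le(\abs{b_0}+B)\abs{\lambda}$ without ever needing $b_d\neq 0$. The paper gives no proof of its own and only cites Cauchy and Mignotte, where the bound comes from applying Cauchy's upper bound on root moduli to the reciprocal polynomial $t^d q(1/t)$, as you remark, so your argument is essentially the standard one.
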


\begin{lemma}
\label{lem eigen zero gap}
Let $A \in \Z^{n \times n}$ be symmetric.
Then, for every nonzero eigenvalue $\lambda$ of $A$, we have
$$
\abs{\lambda} \ge \frac{1}{1 + 2^n \max(1,\Fnorm{A})^n}.
$$
\end{lemma}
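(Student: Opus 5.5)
The approach is to apply Cauchy's bound (Lemma~\ref{lem Cauchy}) not to $p_A$ itself, which may have $0$ as a root, but to the polynomial obtained from $p_A$ by stripping off the factor $t^m$, where $m$ is the multiplicity of the eigenvalue $0$. Its constant term is then a nonzero integer, hence at least $1$ in absolute value. Lemma~\ref{lem char 1} supplies both the integrality of the coefficients and the bound on their magnitude.

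The steps are as follows.

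\begin{enumerate}
\item Write $p_A(t) = t^n + a_{n-1}t^{n-1} + \cdots + a_0$. By Lemma~\ref{lem char 1} the $a_i$ are integers and $\pnorm{p_A} \le 2^n\max(1,\Fnorm A)^n$.
\item If $A$ has no nonzero eigenvalue, the statement is vacuous. Otherwise let $m$ be the largest integer such that $t^m$ divides $p_A$. Then $m<n$, since $p_A$ has a nonzero root.
\item Define $q(t) := p_A(t)/t^m = t^{n-m} + a_{n-1}t^{n-m-1} + \cdots + a_m$. This is a nonzero polynomial with integer coefficients and $b_0 = a_m \neq 0$ by maximality of $m$.
\item Every nonzero eigenvalue $\lambda$ of $A$ is a root of $p_A$ with $\lambda^m \ne 0$, hence a root of $q$.
\item The coefficients of $q$ are a subset of those of $p_A$, so $\max(\abs{b_1},\dots,\abs{b_d}) \le M := 2^n\max(1,\Fnorm A)^n$.
\item Lemma~\ref{lem Cauchy} gives $\abs{\lambda} \ge \abs{b_0}/(\abs{b_0}+M)$. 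The function $x \mapsto x/(x+M)$ is increasing for $x>0$, and $\abs{b_0}\ge 1$ since $b_0$ is a nonzero integer. Therefore $\abs{\lambda} \ge 1/(1+M)$, which is the claim.
\end{enumerate}

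The only genuine obstacle is that $0$ may be an eigenvalue, so that the constant term of $p_A$ vanishes and Cauchy's bound on $p_A$ itself gives nothing. Dividing out the maximal power of $t$ resolves this. Everything else is the monotonicity of $x/(x+M)$ combined with integrality of the coefficients, which forces $\abs{b_0}\ge1$.
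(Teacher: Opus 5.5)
Your proposal is correct and is essentially the paper's own proof. The paper likewise writes $p_A(t)=t^k q(t)$ with $k$ the multiplicity of the eigenvalue $0$, notes that $q(0)=a_k$ is a nonzero integer, and applies Cauchy's bound with the coefficient estimate of Lemma~\ref{lem char 1}; the only cosmetic difference is that it treats $A=0$ explicitly and uses symmetry to get $k\le n-1$, where you instead call the case with no nonzero eigenvalue vacuous.
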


\begin{prf}
Let the characteristic polynomial of $A$ be
$$
p_A(t) = t^n+a_{n-1} t^{n-1}+ \cdots +a_1 t + a_0, \qquad a_n:=1.
$$
If $A=0$, the result is trivial (there is no nonzero eigenvalue), so we assume $A\neq0$.

Let $k\in\{0,1,\dots,n-1\}$ be the multiplicity of $0$ as a root of $p_A(t)$, equivalently, as an eigenvalue of $A$ (we have $k\le n-1$ since $A$ is symmetric and $A\neq0$, so not all eigenvalues of $A$ are zero). Write $p_A(t)=t^k q(t)$, where
$$
q(t) = t^{n-k}+b_{n-k-1}t^{n-k-1}+\cdots+b_1t+b_0, \qquad b_i:=a_{i+k} \text{ for } i=0,\dots,n-k.
$$
Since the eigenvalues of $A$ are the roots of $p_A(t)=t^kq(t)$, with exactly $k$ of them equal to $0$, the roots of $q(t)$ are exactly the nonzero eigenvalues of $A$; in particular, $q(0)=b_0\neq0$. Since $A\in\Z^{n\times n}$, the coefficients of $p_A$ are integer by Lemma~\ref{lem char 1}; in particular $a_k$ (hence $b_0$) is a nonzero integer, so $\abs{b_0}\ge1$.
From Lemma~\ref{lem Cauchy} (applied to $q(t)$), we have that each root $\lambda$ of $q(t)$ satisfies
$$
\abs{\lambda} 
\ge \frac{\abs{b_0}}{\abs{b_0} + \max\bra{\abs{b_1},\dots,\abs{b_{n-k}}}}.
$$
Bounding each $\abs{b_i}$ through Lemma~\ref{lem char 1}, and using $\abs{b_0}\ge1$, we obtain
\begin{align*}
\abs{\lambda} 
& \ge \frac{\abs{b_0}}{\abs{b_0} + \max\bra{\abs{b_1},\dots,\abs{b_{n-k}}}} 
\ge \frac{\abs{b_0}}{\abs{b_0} + 2^n \max(1,\Fnorm{A})^n} \\
& \ge \frac{1}{1 + 2^n \max(1,\Fnorm{A})^n},
\end{align*}
where the second inequality uses $b_i=a_{i+k}$ for $i=1,\dots,n-k$ together with Lemma~\ref{lem char 1}.
Since every nonzero eigenvalue $\lambda$ of $A$ is a root of $q(t)$, this proves the claim.
\end{prf}

\subsection{Rounding near-diagonal matrices}


The next result shows that, given an integer symmetric matrix $A$ and an orthogonal $L$ such that $L^\transp AL$ is already nearly diagonal, we can safely round to zero its off-diagonal entries and its small diagonal entries, without changing the inertia of $A$ or moving its eigenvalues by more than a controlled amount; the proof relies on the eigenvalue gap established in Lemma~\ref{lem eigen zero gap}.
We will also be using the following classical eigenvalue perturbation result.

\begin{lemma}[Corollary 8.1.6 in \cite{GolVan4th}]
\label{lem eigenvalue perturbation}
If $B, C \in \R^{n\times n}$ are symmetric, then, for $i=1,\dots,n$,
$$
\abs{\lambda_i(B)-\lambda_i(C)} \le \norm{B-C}.
$$
\end{lemma}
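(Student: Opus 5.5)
This is Weyl's inequality. I would prove it from the Courant--Fischer minimax characterization of the eigenvalues of a symmetric matrix: for symmetric $A\in\R^{n\times n}$ and $i=1,\dots,n$,
$$
\lambda_i(A)=\max_{\substack{\S\subseteq\R^n\\ \dim\S=i}}\ \min_{\substack{x\in\S\\ \norm{x}=1}} x^\transp A x .
$$
Set $E:=B-C$, which is symmetric. Since $\norm{E}=\max_i\abs{\lambda_i(E)}$ for symmetric $E$, or directly by Cauchy--Schwarz, every unit vector $x$ satisfies $\abs{x^\transp E x}\le\norm{x}\,\norm{Ex}\le\norm{E}$.

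The key step is the pointwise comparison
$$
x^\transp B x = x^\transp C x + x^\transp E x \le x^\transp C x + \norm{E}
\quad\text{for every unit vector } x .
$$
Fix any $i$-dimensional subspace $\S$ and take the minimum over unit $x\in\S$ on both sides. This gives $\min_{x\in\S} x^\transp Bx\le \min_{x\in\S}x^\transp Cx+\norm{E}$. Taking the maximum over all such $\S$ then yields $\lambda_i(B)\le\lambda_i(C)+\norm{B-C}$.

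Exchanging the roles of $B$ and $C$, and noting $\norm{C-B}=\norm{B-C}$, gives the reverse inequality $\lambda_i(C)\le\lambda_i(B)+\norm{B-C}$. Together the two bounds give $\abs{\lambda_i(B)-\lambda_i(C)}\le\norm{B-C}$.

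The only real ingredient is Courant--Fischer itself. If it is not to be assumed, I would prove it by dimension counting. Let $v_1,\dots,v_n$ be an orthonormal eigenbasis of $A$ ordered by eigenvalue. Taking $\S:=\mathrm{span}(v_1,\dots,v_i)$ shows that the max-min is at least $\lambda_i(A)$. Conversely, any $i$-dimensional $\S$ meets the $(n-i+1)$-dimensional space $\mathrm{span}(v_i,\dots,v_n)$ nontrivially, and on that space $x^\transp Ax\le\lambda_i(A)$ for unit $x$. This dimension count is the only step needing any care; everything else is routine.
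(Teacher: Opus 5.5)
Your proof is correct and complete. You push the pointwise bound $x^\transp Bx\le x^\transp Cx+\norm{B-C}$ through the Courant--Fischer max-min characterization and then swap $B$ and $C$, and your dimension-count proof of Courant--Fischer is sound. The paper gives no proof of its own and cites Golub--Van Loan, where this corollary is obtained the same way from the minimax theorem (via $\lambda_i(C)+\lambda_n(B-C)\le\lambda_i(B)\le\lambda_i(C)+\lambda_1(B-C)$), so your route matches the cited one.
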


\begin{lemma}
\label{lem noise}
Let $A \in \Z^{n \times n}$ be symmetric, let $L \in \R^{n \times n}$ be orthogonal, and let $\delta \ge 0$ be such that $\off(L^\transp A L)\le\delta$ and
$$
\delta \le \frac{1}{4\pare{1+2^n\max(1,\Fnorm{A})^n}}.
$$
Let $D \in \R^{n \times n}$ be the diagonal matrix obtained from $L^\transp A L$ by replacing all off-diagonal entries with zero and all diagonal entries with absolute value at most $\delta$ with zero.
Then, $A$ and $D$ have the same inertia.
Furthermore, for $i=1,\dots,n$, we have $\abs{\lambda_i(A) - \lambda_i(D)} \le \delta.$
\end{lemma}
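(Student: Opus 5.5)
Let $B := L^\transp A L$ and let $\Delta := \mathrm{diag}(b_{11},\dots,b_{nn})$ be its diagonal part, so $\Fnorm{B-\Delta} = \off(B) \le \delta$. Write $\gamma := 1/(1+2^n\max(1,\Fnorm{A})^n)$, so the hypothesis reads $\delta \le \gamma/4$. Since $L$ is orthogonal, $B$ and $A$ have the same eigenvalues. Lemma~\ref{lem eigen zero gap} gives that every eigenvalue of $A$, hence of $B$, is either $0$ or has absolute value at least $\gamma \ge 4\delta$.

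The plan is to compare the ordered eigenvalues of $B$ and $D$ in two steps, $B \to \Delta \to D$, using Lemma~\ref{lem eigenvalue perturbation}.

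\textbf{First step.} Lemma~\ref{lem eigenvalue perturbation} and $\norm{B-\Delta} \le \Fnorm{B-\Delta} \le \delta$ give $\abs{\lambda_i(B) - \lambda_i(\Delta)} \le \delta$ for every $i$. The eigenvalues of $\Delta$ are its diagonal entries, sorted. Hence:
\begin{itemize}
\item if $\lambda_i(A)=0$, then $\abs{\lambda_i(\Delta)} \le \delta$;
\item if $\lambda_i(A) \ne 0$, then $\abs{\lambda_i(\Delta)} \ge \gamma - \delta \ge 3\delta > \delta$, and $\lambda_i(\Delta)$ has the same sign as $\lambda_i(A)$.
\end{itemize}

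\textbf{Second step.} $D$ is obtained from $\Delta$ by zeroing exactly the diagonal entries of absolute value at most $\delta$. This map $x \mapsto x\cdot[\abs{x}>\delta]$ is monotone nondecreasing, so it preserves the sorted order, and $\lambda_i(D)$ is the image of $\lambda_i(\Delta)$. The main point needing care is this correspondence of indices; monotonicity handles it, with ties broken consistently. Consequently:
\begin{itemize}
\item if $\lambda_i(A)=0$, then $\lambda_i(D)=0$;
\item otherwise $\lambda_i(D)=\lambda_i(\Delta)$, which has the sign of $\lambda_i(A)$.
\end{itemize}
So the eigenvalue signs match index by index, and $A$ and $D$ have the same inertia.

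\textbf{Eigenvalue bound.} If $\lambda_i(A)=0$, then $\abs{\lambda_i(A)-\lambda_i(D)} = 0$. Otherwise $\abs{\lambda_i(A)-\lambda_i(D)} = \abs{\lambda_i(B)-\lambda_i(\Delta)} \le \delta$. This gives the claimed bound $\abs{\lambda_i(A) - \lambda_i(D)} \le \delta$ for all $i$.
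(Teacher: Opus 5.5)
Your proof is correct and follows essentially the same route as the paper's. Both pass through the diagonal part of $L^\transp AL$ and apply Lemma~\ref{lem eigenvalue perturbation} together with the gap $\gamma \ge 4\delta$ from Lemma~\ref{lem eigen zero gap}. Both then argue that zeroing the small diagonal entries preserves the sorted order; you justify this step a bit more cleanly than the paper, via monotonicity of $x \mapsto x\cdot[\abs{x}>\delta]$.

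One cosmetic point: at $\delta=0$ the chain $\gamma-\delta \ge 3\delta > \delta$ fails at its last step. The inequality you actually need, $\gamma-\delta>\delta$, still holds because $\gamma>0$ and $\gamma\ge 4\delta$.
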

\begin{prf}
Since $L$ is orthogonal, $L^\transp=L^{-1}$; in particular $L$ is nonsingular, so the eigenvalues of $A$ coincide with those of $L^{-1}AL=L^\transp AL$.
From Lemma~\ref{lem eigen zero gap}, every nonzero eigenvalue $\lambda$ of $A$ satisfies 
$$
\abs{\lambda} \ge \frac{1}{1+2^n\max(1,\Fnorm{A})^n} = 4\cdot\pare{\frac{1}{4\pare{1+2^n\max(1,\Fnorm{A})^n}}} \ge 4\delta.
$$
Let $D_0 \in \R^{n \times n}$ be the diagonal matrix obtained from $L^\transp A L$ by replacing all off-diagonal entries with zero.
Note that $D_0$ and $L^\transp AL$ are both symmetric, since $D_0$ is diagonal, and $A$ is symmetric.
Since $L^\transp AL - D_0$ has zero diagonal and off-diagonal entries matching those of $L^\transp AL$, we have $\Fnorm{L^\transp AL - D_0} = \off(L^\transp AL) \le \delta$.
Hence, from Lemma~\ref{lem eigenvalue perturbation} (with $B:=L^\transp AL$ and $C:=D_0$), we have, for every $i=1,\dots,n$,
$$
\abs{\lambda_i(A) - \lambda_i(D_0)} 
= \abs{\lambda_i(L^\transp A L) - \lambda_i(D_0)} 
\le \norm{L^\transp A L-D_0} \le \Fnorm{L^\transp A L-D_0} \le \delta.
$$
Thus: $\lambda_i(A)>0$ implies $\lambda_i(A) \ge 4\delta$ and $\lambda_i(D_0) \ge 3\delta$, $\lambda_i(A)<0$ implies $\lambda_i(A) \le -4\delta$ and $\lambda_i(D_0) \le -3\delta$, and $\lambda_i(A)=0$ implies $\abs{\lambda_i(D_0)} \le \delta$. 
Let $D \in \R^{n \times n}$ be the diagonal matrix obtained from $D_0$ by replacing all diagonal entries with absolute value at most $\delta$ with zero.
Since $D_0$ is diagonal, its eigenvalues are its diagonal entries, and by the above each of them has absolute value either at least $3\delta$ or at most $\delta$.
Replacing those of the second kind by $0$ therefore does not change the relative order of the diagonal entries, so $\lambda_i(D)$ is obtained from $\lambda_i(D_0)$ by the same replacement: $\abs{\lambda_i(D_0)} \ge 3\delta$ implies $\lambda_i(D) = \lambda_i(D_0)$, and $\abs{\lambda_i(D_0)} \le \delta$ implies $\lambda_i(D) = 0$.
Therefore, for every $i$, $\lambda_i(A)$ and $\lambda_i(D)$ are both positive, both zero, or both negative; since these cases are exhaustive, $A$ and $D$ have the same inertia.
Finally, if $\lambda_i(D) = \lambda_i(D_0)$, then $\abs{\lambda_i(A)-\lambda_i(D)} = \abs{\lambda_i(A)-\lambda_i(D_0)} \le \delta$; if $\lambda_i(D)=0$, then $\abs{\lambda_i(D_0)}\le\delta$ implies $\lambda_i(A)=0$, so $\abs{\lambda_i(A)-\lambda_i(D)} = \abs{0-0}=0\le\delta$. In either case, $\abs{\lambda_i(A)-\lambda_i(D)}\le\delta$, as claimed.
\end{prf}

\subsection{Diagonalization}

We now combine Theorem~\ref{th rational Jacobi near diag} with Lemmas~\ref{lem eigen zero gap} and~\ref{lem noise} to obtain our main diagonalization result: given a symmetric rational matrix $A$ and a tolerance $\delta$, we compute in polynomial time a rational orthogonal $L$ and a rational diagonal $D$ with the same inertia as $A$, such that $L^\transp AL$ differs from $D$ by a symmetric matrix of Frobenius norm at most $\delta$.

\begin{theorem}[Diagonalization]
\label{th diagonalize}
Let $A \in \Q^{n \times n}$ be symmetric and let $\delta \in (0,1]$ be rational.
There is an algorithm that computes an orthogonal matrix $L \in \Q^{n \times n}$, a diagonal matrix $D \in \Q^{n \times n}$, and a symmetric matrix $E \in \Q^{n \times n}$ such that 
(i) $L^\transp A L = D + E$ with $\Fnorm{E} \le \delta$, 
and
(ii) $A$ and $D$ have the same inertia.
The algorithm runs on a Turing machine in time polynomial in $\size (A) + \size(\delta)$.

Furthermore, there exists $\zeta\in\Z_{>0}$, depending only on $A$ (not on $\delta$), with $\log(\zeta)$ polynomial in $\size(A)$, such that every nonzero entry of $D$ is at least $1/\zeta$ in absolute value; moreover, $\zeta$ can be computed on a Turing machine in time polynomial in $\size(A)$.
\end{theorem}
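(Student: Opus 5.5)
First clear denominators: let $\deno$ be the product of the denominators of the entries of $A$ and set $\tilde A := \deno A \in \Z^{n\times n}$. Then $\tilde A$ has the same inertia as $A$, and $\size(\deno)$, $\size(\tilde A)$ are $O(\size(A))$. Define
\[
\eta := \frac{1}{4\pare{1+2^n \max(1,\Fnorm{\tilde A})^n}} .
\]
Since $\Fnorm{\tilde A}$ is irrational in general, I would instead work with an integer upper bound $F \ge \max(1,\Fnorm{\tilde A})$, for instance $F := 1+\sum_{i,j}\abs{\tilde a_{ij}}$, and put $\eta := 1/(4(1+2^nF^n))$. This $\eta$ is still at most the threshold required by Lemma~\ref{lem noise}, and $\log(1/\eta)$ is polynomial in $\size(A)$. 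Next choose the working tolerance
\[
\delta' := \min\bra{\eta,\ \deno\,\delta/2}, \qquad\text{or more simply}\qquad \delta' := \min\bra{\eta,\ \delta/2},
\]
using $\deno \ge 1$. Note that $\delta' \le 1$ and that $\size(\delta')$ is polynomial in $\size(A)+\size(\delta)$.

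Apply Theorem~\ref{th rational Jacobi near diag} to $\tilde A$ with tolerance $\delta'$. This yields a rational orthogonal $L$ with $\off(L^\transp \tilde A L)\le\delta'$ in polynomial time. Let $\tilde D$ be obtained from $L^\transp\tilde A L$ by zeroing the off-diagonal entries and those diagonal entries of absolute value at most $\delta'$; this is a rational comparison, so it can be done exactly. Lemma~\ref{lem noise}, with $\delta'$ in the role of $\delta$, shows that $\tilde D$ has the same inertia as $\tilde A$, hence as $A$.

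Set $D := \tilde D/\deno$ and $E := L^\transp A L - D$. Then $E$ is symmetric and rational, and
\[
\Fnorm{E} = \frac{1}{\deno}\,\Fnorm{L^\transp\tilde A L - \tilde D}.
\]
The matrix $L^\transp\tilde A L - \tilde D$ has two parts: the off-diagonal part, of Frobenius norm at most $\delta'$, and the zeroed diagonal entries. The zeroed diagonal entries are the main point to check. In the proof of Lemma~\ref{lem noise}, they correspond exactly to zero eigenvalues of $\tilde A$, with $\abs{\lambda_i(D_0)}\le\delta'$. Bounding them naively by $\delta'$ each would give a total contribution up to $\sqrt n\,\delta'$. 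A better bound: if $z$ is the vector of zeroed diagonal entries, then $\abs{\lambda_i(\tilde A)-\lambda_i(D_0)}$ is controlled by Weyl, but to obtain a Frobenius bound I would use the Hoffman--Wielandt inequality $\sum_i(\lambda_i(B)-\lambda_i(C))^2\le\Fnorm{B-C}^2$. With $B = L^\transp\tilde AL$ and $C = D_0$, and the zeroed entries matched to zero eigenvalues, this gives $\Fnorm{z}\le\delta'$. To avoid citing an extra result, I could simply take $\delta' \le \delta/(2\sqrt n)$ instead, for example $\delta' := \min\{\eta,\ \delta/(2n)\}$, which is rational. Then
\[
\Fnorm{L^\transp\tilde AL-\tilde D}\le \delta' + n\delta' \le \delta ,
\]
and therefore $\Fnorm{E}\le\delta/\deno\le\delta$. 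The sizes of $L$, $D$, $E$ are polynomial by Theorem~\ref{th rational Jacobi near diag}, and computing $D$ and $E$ takes polynomially many rational operations.

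For the final claim, take $\zeta := 2\deno(1+2^nF^n)$. This is a positive integer depending only on $A$, with $\log\zeta$ polynomial in $\size(A)$, and it is computable in polynomial time by repeated squaring. Every nonzero diagonal entry of $\tilde D$ equals $\lambda_i(D_0)$ for some $i$ with $\lambda_i(\tilde A)\ne0$, so from the proof of Lemma~\ref{lem noise} it has absolute value at least $3\eta \ge 1/(2(1+2^nF^n))$. Dividing by $\deno$, every nonzero entry of $D$ is at least $1/\zeta$ in absolute value.

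The step I expect to need the most care is making the rounding bound compatible with Frobenius control of $E$ and with the scaling by $\deno$. The remaining steps are routine applications of Theorem~\ref{th rational Jacobi near diag} and Lemma~\ref{lem noise}.
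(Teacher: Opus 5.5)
Your proposal is correct and follows essentially the same route as the paper. It clears denominators, runs the rational Jacobi algorithm on $\tilde A$ with a rational tolerance $\delta'\le\delta/(2n)$ that is also below a quarter of the eigenvalue gap of Lemma~\ref{lem eigen zero gap}, rounds via Lemma~\ref{lem noise}, rescales by $1/\deno$, and reads off $\zeta$ from that gap. The differences are cosmetic: the paper uses the integer surrogate $\omega=1+2^n\max(1,\Fnorm{\tilde A}^2)^n$ where you use $F=1+\sum_{i,j}\abs{\tilde a_{ij}}$, and it bounds the error by $\sqrt{n+1}\,\delta'$ via Pythagoras rather than by the triangle inequality. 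Your bound $\abs{\lambda_i(D_0)}\ge 3\eta$ is valid because the argument of Lemma~\ref{lem noise} goes through with $\eta$ in place of $\delta'$.
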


\begin{prf}
Let $\deno \in \Z_{>0}$ be the product of the denominators of the entries in $A$ and define $\tilde A := \deno A \in \Z^{n \times n}$.
The size of each entry of $\tilde A$ is $O(\size(A))$, thus $\size(\tilde A) = O(n^2 \size(A))$.
By Lemma~\ref{lem eigen zero gap}, every nonzero eigenvalue of $\tilde A$ has absolute value at least
$$
\beta(\tilde A) := \frac{1}{1+2^n\max\pare{1,\Fnorm{\tilde A}}^n} \ge \frac1\omega, \qquad \omega := 1+2^n\max\pare{1,\Fnorm{\tilde A}^2}^n,
$$
where the inequality holds since $\max(1,\Fnorm{\tilde A})\le\max(1,\Fnorm{\tilde A}^2)$, and $\omega\in\Z_{>0}$ because $\Fnorm{\tilde A}^2=\sum_{i,j}\tilde a_{ij}^2$ is a nonnegative integer.
Let 
$$
\delta' := \min\bra{\frac{\delta}{2n}, \ \frac{1}{4\omega}} \in (0,1],
$$
so that $2n \delta' \le \delta$ and $\delta' \le 1/(4\omega) \le \beta(\tilde A)/4$; note that $\delta'$ is rational, of size polynomial in $\size(A)+\size(\delta)$.
We employ Theorem~\ref{th rational Jacobi near diag} (with $A:=\tilde A$ and $\delta:=\delta'$) and obtain an orthogonal matrix $L \in \Q^{n \times n}$ such that $\off(L^\transp \tilde A L) \le \delta'$.
Since $\delta'\le\beta(\tilde A)/4=\frac{1}{4\pare{1+2^n\max(1,\Fnorm{\tilde A})^n}}$, we may apply Lemma~\ref{lem noise} (with $A:=\tilde A$, $\delta:=\delta'$).
Let $\tilde D \in \Q^{n \times n}$ be the diagonal matrix obtained from $L^\transp \tilde A L$ by replacing all off-diagonal entries with zero, and all diagonal entries with absolute value at most $\delta'$ with zero.
By that lemma, $\tilde A$ and $\tilde D$ have the same inertia and, for $i=1,\dots,n$,
\begin{equation}
\label{eq shown before}
\abs{\lambda_i(\tilde A) - \lambda_i(\tilde D)} \le \delta'.
\end{equation}
Let $\tilde E := L^\transp \tilde A L - \tilde D$.
The off-diagonal entries of $\tilde E$ coincide with those of $L^\transp \tilde AL$, so the sum of their squares equals $\off(L^\transp \tilde AL)^2\le(\delta')^2$; by the definition of $\tilde D$, the diagonal entries of $\tilde E$ have absolute value at most $\delta'$.
Thus, 
$$
\Fnorm{\tilde E}^2 \le (\delta')^2 + n (\delta')^2 = (1+n)(\delta')^2 \le 2n (\delta')^2,
$$
so $\Fnorm{\tilde E} \le \sqrt{2n}\,\delta' \le 2n \delta' \le \delta$, where we used $\sqrt{2n}\le 2n$.
Now let $D := \tilde D/\deno \in \Q^{n \times n}$ and $E := \tilde E/\deno \in \Q^{n \times n}$.
Clearly $D$ is diagonal, $E$ is symmetric, and 
$$
L^\transp A L = \frac 1\deno L^\transp \tilde A L = \frac 1\deno (\tilde D + \tilde E) = D + E.
$$
Since $\deno$ is positive, $A$ and $\tilde A$ have the same inertia, and so do $\tilde D$ and $D$; since $\tilde A$ and $\tilde D$ have the same inertia, this proves (ii).
Moreover, $\deno \ge 1$ implies 
$$
\Fnorm{E} = \frac 1\deno \Fnorm{\tilde E} \le \Fnorm{\tilde E} \le \delta.
$$
This proves (i).
\smallskip
\noindent
\textbf{An a priori lower bound on the nonzero diagonal entries of $D$.}
Let $\zeta := 2\deno \omega \in \Z_{>0}$.
Since $\Fnorm{\tilde A}^2$ is a nonnegative integer of size polynomial in $\size(A)$ (as $\size(\tilde A)=O(n^2\size(A))$) and $\size(\deno)$ is polynomial in $\size(A)$, $\log(\zeta)$ is polynomial in $\size(A)$ and $\zeta$ is computable in time polynomial in $\size(A)$; note also that $\zeta$ depends only on $A$, not on $\delta$.
For every $i$ with $\lambda_i(A)\ne0$, we have $\lambda_i(\tilde A)=\deno\lambda_i(A)\ne0$, so $|\lambda_i(\tilde A)|\ge\beta(\tilde A)$ by Lemma~\ref{lem eigen zero gap}. Hence, using $|\lambda_i(\tilde A)-\lambda_i(\tilde D)|\le\delta'$ (shown in~\eqref{eq shown before}),
$$
\abs{\lambda_i(\tilde D)} \ge \abs{\lambda_i(\tilde A)} - \abs{\lambda_i(\tilde A)-\lambda_i(\tilde D)} \ge \beta(\tilde A) - \delta'.
$$
Since $\delta'\le\beta(\tilde A)/4$ (shown above), this gives
$$
\abs{\lambda_i(\tilde D)} \ge \beta(\tilde A) - \frac{\beta(\tilde A)}{4} = \frac34\beta(\tilde A).
$$
Dividing by $\deno\ge1$ and using $\beta(\tilde A)\ge 1/\omega$,
$$
\abs{\lambda_i(D)} = \frac{\abs{\lambda_i(\tilde D)}}{\deno} \ge \frac{3\beta(\tilde A)}{4\deno} \ge \frac{3}{4\deno \omega} = \frac32\cdot\frac1{\zeta} \ge \frac1{\zeta}.
$$
Thus, every nonzero eigenvalue $\lambda_i(D)$ satisfies $\abs{\lambda_i(D)}\ge1/\zeta$. Since $D$ is diagonal, its diagonal entries coincide, as a multiset, with its eigenvalues $\lambda_1(D),\dots,\lambda_n(D)$; hence every nonzero diagonal entry of $D$ satisfies the same bound, establishing the claim.

\smallskip
\noindent
\textbf{Running time.}
Every quantity other than $L$ is obtained by $O(n^2)$ arithmetic operations on integers or rationals of size polynomial in $\size(A)+\size(\delta)$: $\deno$ and $\tilde A=\deno A$ have size polynomial in $\size(A)$, as do $\Fnorm{\tilde A}^2$ and $\omega$, while $\size(\delta')$ is polynomial in $\size(A)+\size(\delta)$. By Theorem~\ref{th rational Jacobi near diag} (applied to $\tilde A$ with $\delta:=\delta'$), computing $L$ takes time polynomial in $\size(\tilde A)+\size(\delta')$, hence in $\size(A)+\size(\delta)$, since $\size(\tilde A)=O(n^2\size(A))$. Forming $\tilde D,\tilde E$ from $L^\transp\tilde AL$ and rescaling by $\deno$ to obtain $D,E$ take a further $O(n^2)$ arithmetic operations on rationals of size polynomial in $\size(A)+\size(\delta)$. Hence the entire algorithm runs on a Turing machine in time polynomial in $\size(A)+\size(\delta)$.
\end{prf}

Note that Theorem~\ref{th diagonalize} recovers the conclusion of Theorem~\ref{th rational Jacobi near diag}, for the same $\delta$: since $D$ is diagonal, the off-diagonal entries of $L^\transp AL$ coincide with those of $E$, so $\off(L^\transp AL)=\off(E)\le\Fnorm E\le\delta$.

Theorem~\ref{th diagonalize} also yields an approximate spectral decomposition of $A$.

\begin{corollary}[Approximate spectral decomposition]
\label{cor approx spectral}
Let $A \in \Q^{n \times n}$ be symmetric, let $\delta \in (0,1]$ be rational, and let $L,D,E$ be as in Theorem~\ref{th diagonalize}.
Denote by $v^i$ the $i$th column of $L$ and by $d_i$ the $i$th diagonal entry of $D$.
Then, for every $i=1,\dots,n$,
(i) $\abs{\lambda_i(A) - \lambda_i(D)} \le \delta$,
and
(ii) $\norm{Av^i - d_i v^i} \le \delta$.
\end{corollary}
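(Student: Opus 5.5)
For part (i), I will apply Lemma~\ref{lem eigenvalue perturbation} directly. Since $L$ is orthogonal, $L^\transp A L$ has the same eigenvalues as $A$. From $L^\transp AL=D+E$, with $D$ and $E$ symmetric, Lemma~\ref{lem eigenvalue perturbation} applied to $B:=L^\transp AL$ and $C:=D$ gives
$$
\abs{\lambda_i(A)-\lambda_i(D)}=\abs{\lambda_i(L^\transp AL)-\lambda_i(D)}\le\norm{E}\le\Fnorm{E}\le\delta.
$$
This bound only uses property (i) of Theorem~\ref{th diagonalize}. The sharper rounding argument of Lemma~\ref{lem noise} is therefore not needed.

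For part (ii), I will use that $L$ is orthogonal, so that $LL^\transp=I$. Multiplying $L^\transp AL=D+E$ on the left by $L$ gives $AL=LD+LE$. Reading off the $i$th column, we get $Av^i=d_iv^i+Le^i$, where $e^i$ denotes the $i$th column of $E$. Hence
$$
\norm{Av^i-d_iv^i}=\norm{Le^i}=\norm{e^i}\le\Fnorm{E}\le\delta,
$$
because orthogonal matrices preserve Euclidean norms and any single column has Euclidean norm at most the Frobenius norm of the matrix.

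I do not expect a real obstacle in either part. The only points to check are that $D$ and $E$ are symmetric, so that Lemma~\ref{lem eigenvalue perturbation} applies, which holds since $D$ is diagonal and $E$ is symmetric by Theorem~\ref{th diagonalize}. The other point is that $\lambda_i(D)$ refers to the sorted eigenvalues, not to $d_i$, so (i) concerns the multiset of diagonal entries after sorting and no matching between $d_i$ and $\lambda_i(A)$ is claimed.
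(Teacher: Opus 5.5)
Your proposal is correct and follows the paper's proof: part (i) applies Lemma~\ref{lem eigenvalue perturbation} to $L^\transp AL=D+E$ and $D$, and part (ii) uses orthogonality of $L$ to write $Av^i=d_iv^i+LEe^i$ and bounds the residual by the norm of a column of $E$, hence by $\Fnorm{E}\le\delta$. The only difference is notational: you write $e^i$ for the $i$th column of $E$, whereas the paper uses $e^i$ for the $i$th standard basis vector and writes that column as $Ee^i$.
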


\begin{prf}
Since $L$ is orthogonal, $A$ and $L^\transp A L = D+E$ are similar, hence $\lambda_i(A) = \lambda_i(D+E)$ for every $i$.
By Lemma~\ref{lem eigenvalue perturbation}, applied to $D+E$ and $D$, we obtain
$$
\abs{\lambda_i(A) - \lambda_i(D)} = \abs{\lambda_i(D+E) - \lambda_i(D)} \le \norm{E} \le \Fnorm{E} \le \delta,
$$
which proves (i).
For (ii), let $e^i$ denote the $i$th vector of the standard basis of $\R^n$, so that $v^i = Le^i$ and $L^{-1}v^i = e^i$, since $L$ is orthogonal and thus $L^{-1} = L^\transp$.
From $L^\transp A L = D+E$ we get $A = L(D+E)L^{-1}$, so
$$
Av^i = L(D+E)L^{-1}v^i = L(D+E)e^i = d_iv^i + LEe^i,
$$
and therefore $\norm{Av^i-d_iv^i} = \norm{LEe^i} = \norm{Ee^i} \le \Fnorm{E} \le \delta$.
\end{prf}

By (ii), each column $v^i$ of $L$ behaves as an approximate eigenvector of $A$, with corresponding approximate eigenvalue $d_i$, while by (i) the diagonal entries of $D$ approximate the eigenvalues of $A$.
Since the columns $v^1,\dots,v^n$ of the orthogonal matrix $L$ form an orthonormal basis of $\R^n$, this yields a full set of $n$ simultaneous approximate eigenvectors, each with residual controlled by the same tolerance $\delta$.
In particular, denoting by $L_\delta, D_\delta, E_\delta$ the matrices obtained for the tolerance $\delta$, we have $\Fnorm{E_\delta}\to0$ as $\delta\to0$ by Theorem~\ref{th diagonalize}(i); since the set of orthogonal matrices is compact, every limit point of $L_\delta$ as $\delta\to0$ is an exact eigenvector matrix of $A$, and the corresponding limit point of $D_\delta$ is the diagonal matrix of the eigenvalues of $A$, consistently with the classical spectral theorem for symmetric matrices.

\subsection{Simultaneous diagonalization of two matrices}

We now consider the simultaneous diagonalization of a symmetric matrix $A$ and a positive definite matrix $M$: classically, one seeks a nonsingular matrix $L$ such that $L^\transp A L$ is diagonal and $L^\transp M L = I$.
As shown by Example~\ref{ex no rational L}, such an $L$ is in general irrational, hence it cannot be computed on a Turing machine.
The next theorem shows that, if the requirement that $L^\transp A L$ be diagonal is relaxed to being diagonal up to a perturbation of Frobenius norm at most $\delta$, then $L$ can be taken rational and can be computed in polynomial time, while the condition $L^\transp M L = I$ is still satisfied exactly and the inertia of $A$ is still preserved.
We assume that $M$ is given in the factored form $M = C^\transp C$ for a rational matrix $C$; the remark following the theorem shows that this assumption cannot be dropped.

\begin{theorem}[Simultaneous diagonalization of two matrices]
\label{th two matrices}
Let $A \in \Q^{n \times n}$ be symmetric, let $C \in \Q^{n \times n}$ be nonsingular, let $M := C^\transp C$, and let $\delta \in (0,1]$ be rational.
There is an algorithm that computes a nonsingular matrix $L \in \Q^{n \times n}$, a diagonal matrix $D \in \Q^{n \times n}$, and a symmetric matrix $E \in \Q^{n \times n}$ such that
(i) $L^\transp A L = D + E$ with $\Fnorm{E} \le \delta$,
(ii) $A$ and $D$ have the same inertia,
and
(iii) $L^\transp M L = I$.
The algorithm runs on a Turing machine in time polynomial in $\size(A)+\size(C)+\size(\delta)$.

Furthermore, there exists $\zeta \in \Z_{>0}$, depending only on $A$ and $C$ (not on $\delta$), with $\log(\zeta)$ polynomial in $\size(A)+\size(C)$, such that every nonzero entry of $D$ is at least $1/\zeta$ in absolute value; moreover, $\zeta$ can be computed on a Turing machine in time polynomial in $\size(A)+\size(C)$.
\end{theorem}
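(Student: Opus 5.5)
The plan is to reduce to Theorem~\ref{th diagonalize} by first mapping the ellipsoid to a ball, as the introduction suggests. Compute $C^{-1}\in\Q^{n\times n}$ by Gaussian elimination, in time polynomial in $\size(C)$; by the standard bounds of \cite{SchBookIP}, $\size(C^{-1})$ is polynomial in $\size(C)$. Form $A' := C^{-\transp} A C^{-1}$, which is a symmetric rational matrix of size polynomial in $\size(A)+\size(C)$. Apply Theorem~\ref{th diagonalize} to $A'$ with tolerance $\delta$. This yields a rational orthogonal $L'$, a diagonal $D$, and a symmetric $E$ with $L'^\transp A' L' = D+E$, $\Fnorm{E}\le\delta$, and $A'$ having the same inertia as $D$.

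Set $L := C^{-1}L'$. Then $L$ is rational and nonsingular, being a product of nonsingular matrices.

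\textbf{Property (i).} We have $L^\transp A L = L'^\transp C^{-\transp} A C^{-1} L' = L'^\transp A' L' = D+E$.

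\textbf{Property (iii).} We have $L^\transp M L = L'^\transp C^{-\transp}C^\transp C C^{-1} L' = L'^\transp L' = I$, by orthogonality of $L'$.

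\textbf{Property (ii).} Since $A' = (C^{-1})^\transp A (C^{-1})$ is congruent to $A$ via a nonsingular matrix, Sylvester's law of inertia gives that $A$ and $A'$ have the same inertia. Hence $A$ and $D$ have the same inertia.

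\textbf{The constant $\zeta$.} Take the $\zeta$ furnished by Theorem~\ref{th diagonalize} for the matrix $A'$. It depends only on $A'$, hence only on $A$ and $C$, and not on $\delta$. Moreover $\log\zeta$ is polynomial in $\size(A')$, hence in $\size(A)+\size(C)$, and $\zeta$ is computable in that time, since $A'$ itself is.

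\textbf{Running time.} Computing $C^{-1}$ and $A'$ is polynomial in $\size(A)+\size(C)$. The call to Theorem~\ref{th diagonalize} is polynomial in $\size(A')+\size(\delta)$. The final product $C^{-1}L'$ involves matrices whose sizes are polynomial, because $L'$ is the output of a polynomial-time algorithm.

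The only point needing care, which I regard as the main (mild) obstacle, is bounding $\size(A')$ polynomially in terms of $\size(A)+\size(C)$. This should follow from the standard fact that the entries of $C^{-1}$ are ratios of subdeterminants of $C$, each of size polynomial in $\size(C)$, together with the fact that products and sums of polynomially many such rationals keep polynomial size.
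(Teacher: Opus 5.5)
Your proposal is correct and follows the paper's proof essentially verbatim. Like the paper, you form $C^{-\transp} A C^{-1}$, apply Theorem~\ref{th diagonalize} to it, and set $L := C^{-1}L'$; you get (ii) from Sylvester's law of inertia, and you take $\zeta$ from the ``Furthermore'' clause of Theorem~\ref{th diagonalize} applied to the transformed matrix, whose size is polynomial in $\size(A)+\size(C)$.
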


\begin{prf}
Let $\tilde A := C^{-\transp} A C^{-1}$, which is symmetric and belongs to $\Q^{n\times n}$, and which has the same inertia as $A$ by Sylvester's law of inertia.
Since $\tilde A$ is obtained from $A$ and $C$ by a constant number of matrix inversions and multiplications, $\size(\tilde A)$ is polynomial in $\size(A)+\size(C)$.
We employ Theorem~\ref{th diagonalize} (with $A:=\tilde A$ and the same $\delta$) and obtain an orthogonal matrix $\tilde L \in \Q^{n \times n}$, a diagonal matrix $D \in \Q^{n \times n}$, and a symmetric matrix $E \in \Q^{n \times n}$ such that $\tilde L^\transp \tilde A \tilde L = D+E$, $\Fnorm{E} \le \delta$, and $\tilde A$ and $D$ have the same inertia.
Define $L := C^{-1} \tilde L \in \Q^{n \times n}$, which is nonsingular.
Then $L^\transp A L = \tilde L^\transp C^{-\transp} A C^{-1} \tilde L = \tilde L^\transp \tilde A \tilde L = D+E$, which proves (i), and (ii) holds since $A$, $\tilde A$, and $D$ all have the same inertia.
Moreover
$$
L^\transp M L = \tilde L^\transp C^{-\transp} \pare{C^\transp C} C^{-1} \tilde L = \tilde L^\transp \tilde L = I,
$$
since $\tilde L$ is orthogonal, which proves (iii).
The ``Furthermore'' clause follows from the corresponding clause of Theorem~\ref{th diagonalize}, applied with $A:=\tilde A$, together with the fact that $\size(\tilde A)$ is polynomial in $\size(A)+\size(C)$.

\smallskip
\noindent
\textbf{Running time.}
Forming $\tilde A$ and $L$ takes a constant number of matrix operations on rationals of size polynomial in $\size(A)+\size(C)$, and, by Theorem~\ref{th diagonalize}, computing $\tilde L, D, E$ takes time polynomial in $\size(\tilde A)+\size(\delta)$.
Hence the entire algorithm runs on a Turing machine in time polynomial in $\size(A)+\size(C)+\size(\delta)$.
\end{prf}

Both departures of Theorem~\ref{th two matrices} from the classical statement \eqref{eq classical simultaneous} are necessary.
First, by Example~\ref{ex no rational L}, the first requirement in \eqref{eq classical simultaneous} cannot be met by a rational $L$, even when $M = I$; this is why condition (i) is a relaxation.
Second, the hypothesis that $M$ be given in the factored form $M = C^\transp C$, with $C$ rational, cannot be dropped, as the next example shows.

\begin{example}
\label{ex M not factored}
Let $n := 2$ and $M := 3 I$, and suppose that $L^\transp M L = I$ for some rational nonsingular $L$.
Then $L^\transp L = \frac13 I$, so the first column $(x,y)$ of $L$ satisfies $x^2+y^2 = \frac13$, and thus $(3x)^2+(3y)^2 = 3$, exhibiting $3$ as a sum of two rational squares.
Writing $3 = (p/r)^2+(q/r)^2$ with $p,q,r$ integers such that $\gcd(p,q,r)=1$, we obtain $3r^2 = p^2+q^2$; since every square is congruent to $0$ or $1$ modulo $3$, it follows that $p \equiv q \equiv 0 \pmod 3$, hence $9$ divides $3r^2$, so that $3$ divides $r$, contradicting $\gcd(p,q,r)=1$.
Therefore no rational nonsingular $L$ satisfies $L^\transp M L = I$; equivalently, there is no nonsingular rational $C$ with $M = C^\transp C$, since $L := C^{-1}$ would then satisfy $L^\transp M L = I$.
\end{example}

Example~\ref{ex M not factored} exhibits a further obstruction, beyond the irrationality of Example~\ref{ex no rational L}, to performing the classical simultaneous reduction on a Turing machine.

\subsection{Simultaneous diagonalization and rounding}

We now extend Theorem~\ref{th two matrices} to the setting where, in place of the matrix $M$, we are given a full-dimensional rational polytope, and we wish the matrix $L$ to transform this polytope into one sandwiched between two concentric balls (an ellipsoid rounding condition). This will later allow us to reduce a general mixed integer quadratic program to one with a separable objective function and posed over a polytope with a spherical form.

We will be using the following algorithmic result, due to Lenstra~\cite{Len83} (see also Corollary~15.6a in Schrijver~\cite{SchBookIP}).
Given a nonsingular matrix $C \in \R^{n \times n}$ and a vector $a \in \R^n$, we define the \emph{ellipsoid}
$$
\E(a,C) := \{ x \in \R^n : \norm{C (x-a)} \le 1\}.
$$

\begin{lemma}[\cite{Len83}, Corollary 15.6a in \cite{SchBookIP}]
\label{lem Lenstra sandwich}
There is an algorithm that, given a rational system $Wx\le w$ such that $\P :=\{x\in\R^n:Wx\le w\}$ is full-dimensional and bounded, computes a nonsingular matrix $C\in\Q^{n\times n}$ and a vector $a\in\Q^n$ such that
$$
\E(a,C) \subseteq \P \subseteq \E\pare{a,C/n^{3/2}},
$$
in time polynomially bounded by $\size(W)+\size(w)$.
\end{lemma}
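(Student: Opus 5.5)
This is Lenstra's rounding result, and the paper cites it as Corollary~15.6a in~\cite{SchBookIP}. The plan is therefore to follow the standard shallow-cut ellipsoid argument and check that each step can be done in rational arithmetic of polynomial size.

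\textbf{Initialization.} Since $\P$ is bounded and full-dimensional, standard vertex-size bounds give a radius $R$ with $\P\subseteq B(0,R)$ and $\log R$ polynomial in $\size(W)+\size(w)$. They also give a lower bound $\mathrm{vol}(\P)\ge 2^{-\mathrm{poly}}$, obtained from a simplex spanned by $n+1$ affinely independent vertices. Start with the ellipsoid $E_0:=B(0,R)$.

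\textbf{Shallow-cut iteration.} Maintain an ellipsoid $E_k=\{x:(x-a)^\transp G^{-1}(x-a)\le1\}$ containing $\P$, and test whether the concentric ellipsoid shrunk by a factor of order $n^{-3/2}$ (in the paper's parametrization, about $\E(a,C\cdot n^{3/2})$) is contained in $\P$.

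1. Check every facet: maximize $W_i x$ over the shrunk ellipsoid in closed form, which gives $W_i a+\frac1{\rho}\sqrt{W_iGW_i^\transp}$ for the shrink factor $\rho$.
2. If all facets are satisfied, stop.
3. Otherwise, some constraint cuts off part of the shrunk ellipsoid. This is a shallow cut, and it yields a new ellipsoid containing $E_k\cap\{W_ix\le w_i\}\supseteq\P$ whose volume is smaller by a factor $e^{-\Omega(1/n^c)}$. The shallow-cut formulas with depth parameter $1/(n+1)$ give exactly the $n^{3/2}$ ratio, and a square-root-free containment test is used in step~1 for the comparison.

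Because the volume drops geometrically and is bounded below by $\mathrm{vol}(\P)$, the process stops after polynomially many iterations. At that point $E_k\supseteq\P\supseteq$ the shrunk ellipsoid.

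\textbf{Obtaining a rational factor $C$.} Finding $C$ with $G^{-1}=C^\transp C$ in rational form is an issue; compare Example~\ref{ex M not factored}. The fix is to carry the iteration directly in factored form. Keep $G=FF^\transp$ with $F$ rational, and apply each update $G'=\gamma(G-\beta\, Gd\,d^\transp G/(d^\transp Gd))$ as a rank-one update of the factor $F$. Such an update needs $\sqrt{\gamma}$ and $\sqrt{1-\beta}$, which are irrational in general. Replace them by slightly larger rational values; this only enlarges the ellipsoid and costs a factor $1+1/\mathrm{poly}$ in volume decrease, absorbed with slack in the ratio. The output is $C:=F^{-1}$ and $a$.

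\textbf{Main obstacle: sizes.} As in the ellipsoid method, entries of $F$ and $a$ must be rounded to polynomially many bits. Rounding outward, with a slight blow-up, preserves $\P\subseteq E_k$, and the ratio slack absorbs it; the volume argument then goes through with the standard perturbation bounds (Gr\"otschel--Lov\'asz--Schrijver). I expect this rounding analysis, together with keeping the factorization rational, to be the hardest part.
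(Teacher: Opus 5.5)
The paper does not prove this lemma. It imports it from Lenstra and from Corollary~15.6a of Schrijver's book, so your sketch has to stand on its own. Its architecture (shallow cuts, an exact square-root-free containment test, outward rounding) is the standard one. You are right to single out the rationality of $C$: by Example~\ref{ex M not factored}, it cannot be recovered by exactly factoring a rational $M$ at the end. Your factored update is correct. Let $\sigma:=1-\sqrt{1-\beta}$, where $\beta$ is the coefficient in your formula for $G'$. Then $F':=\sqrt\gamma\,\bigl(F-\sigma\,Gdd^\transp F/(d^\transp Gd)\bigr)$ satisfies $F'(F')^\transp=G'$. Replacing $\sqrt\gamma$ and $\sqrt{1-\beta}$ by larger rationals (the latter at most $1$) only enlarges the ellipsoid. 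At a fixed depth these two numbers depend only on $n$ and the depth, so the only per-step irrationality is $\sqrt{d^\transp Gd}$ in the center update, which your rounding of $a$ covers.

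The ratio bookkeeping, however, is wrong, and your argument fails as written for $n\le2$.
\begin{itemize}
\item Suppose the ellipsoid shrunk by $1/\rho$ violates $W_ix\le w_i$. You only know $\{x:W_ix\le w_i\}\subseteq\{x:W_ix\le W_ia+t\sqrt{W_iGW_i^\transp}\}$ for depths $t\ge1/\rho$, and at termination you get ratio $\rho$. So depth $1/(n+1)$ yields ratio $n+1$, not $n^{3/2}$. For $n=2$, either you test at $1/3$ and get ratio $3>2\sqrt2$, or you test at $2^{-3/2}>1/3$ and the depth-$1/3$ cut may discard part of $\P$. The fix is to pick a rational $\rho$ with $n<\rho\le n^{3/2}$ (possible for $n\ge2$) and use $1/\rho$ both in the test and as the depth. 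The volume then drops by a factor $e^{-\Omega((1-n/\rho)^2/n)}$ per step. It is this margin, not slack in the final ratio, that absorbs the outward rounding, since the final inclusions are checked exactly on the current rational $a,F$.
\item With $E_k=\{a+Fy:\norm{y}\le1\}=\E(a,F^{-1})$, your output $C:=F^{-1}$ gives $\P\subseteq\E(a,C)$, which is the wrong inclusion. The output must be $C:=\rho F^{-1}$. This is another reason $\rho$ must be rational: $n^{3/2}$ is irrational unless $n$ is a perfect square.
\item For $n=1$ the statement forces $\P=\E(a,C)$, leaving no room for rounding. Treat it directly: $\P=[\ell,u]$, $a:=(\ell+u)/2$, $C:=2/(u-\ell)$.
\item The size analysis you defer is real work in factored form. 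Replacing $F$ by $F+Z$ costs a blow-up by $1/(1-\norm{F^{-1}Z})$. So you need a polynomial lower bound on the smallest singular value of $F$ along the run, as in Gr\"otschel, Lov\'asz and Schrijver.
\end{itemize}

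A lighter route decouples the factor from the iteration. Run the usual unfactored method with test factor and depth $1/\rho'$, for a rational $\rho'\in(n,n^{3/2})$. This gives rational $a$ and $G\succ0$ with $\{x:(x-a)^\transp G^{-1}(x-a)\le(\rho')^{-2}\}\subseteq\P\subseteq\{x:(x-a)^\transp G^{-1}(x-a)\le1\}$. Compute $G^{-1}=L\Delta L^\transp$ exactly by rational Gaussian elimination. Choose rationals $\tilde\delta_i$ with $(1-\eta)\Delta_{ii}\le\tilde\delta_i^2\le(1+\eta)\Delta_{ii}$, and set $C_0:=\Theta L^\transp$ with $\Theta:=\mathrm{diag}(\tilde\delta_1,\dots,\tilde\delta_n)$. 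Then $(1-\eta)G^{-1}\preceq C_0^\transp C_0\preceq(1+\eta)G^{-1}$. Any rational $\mu\in[\rho'/\sqrt{1-\eta},\,n^{3/2}/\sqrt{1+\eta}]$, an interval that is nonempty once $\eta$ is small, makes $C:=\mu C_0$ satisfy the lemma. This trades the finite-precision analysis of a factored update for a single perturbation estimate, and lets you use the standard unfactored analysis verbatim. Your route, by contrast, produces $C$ directly from the iteration.
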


\begin{theorem}[Simultaneous diagonalization and rounding]
\label{th simultaneous diagonalization}
Let $A \in \Q^{n \times n}$ be symmetric, let $\delta \in (0,1]$ be rational, and let $Wx\le w$ be a rational system such that $\P := \{x \in \R^n: Wx \le w\}$ is full-dimensional and bounded.
There is an algorithm that computes a nonsingular matrix $L \in \Q^{n \times n}$, a diagonal matrix $D \in \Q^{n \times n}$, a symmetric matrix $E \in \Q^{n \times n}$, and $a \in \Q^n$ such that 
(i) $L^\transp A L = D + E$ with $\Fnorm{E} \le \delta$, 
(ii) $A$ and $D$ have the same inertia, 
and 
(iii) $\E (a, L^{-1}) \subseteq \P \subseteq \E (a, L^{-1} / n^{3/2})$.
The algorithm runs on a Turing machine in time polynomial in $\size(A)+\size(W)+\size(w) + \size(\delta)$.

Furthermore, there exists $\zeta \in \Z_{>0}$, depending only on $A,W,w$ (not on $\delta$), with $\log(\zeta)$ polynomial in $\size(A)+\size(W)+\size(w)$, such that every nonzero entry of $D$ is at least $1/\zeta$ in absolute value; moreover, $\zeta$ can be computed on a Turing machine in time polynomial in $\size(A)+\size(W)+\size(w)$.
\end{theorem}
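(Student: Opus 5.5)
The plan is to compose Lemma~\ref{lem Lenstra sandwich} with Theorem~\ref{th two matrices}. First apply Lemma~\ref{lem Lenstra sandwich} to the system $Wx\le w$. This produces, in time polynomial in $\size(W)+\size(w)$, a nonsingular $C\in\Q^{n\times n}$ and $a\in\Q^n$ with $\E(a,C)\subseteq\P\subseteq\E(a,C/n^{3/2})$. Since the running time is polynomial, $\size(C)$ and $\size(a)$ are also polynomial in $\size(W)+\size(w)$.

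Next apply Theorem~\ref{th two matrices} to $A$, this $C$, and $\delta$. It returns a nonsingular rational $L$, a diagonal $D$ and a symmetric $E$ satisfying (i) and (ii) directly, together with $L^\transp C^\transp C L=I$. From the proof of Theorem~\ref{th two matrices} we have $L=C^{-1}\tilde L$ with $\tilde L$ orthogonal. Rather than citing only the statement, I would use this explicit form, which gives $L^{-1}=\tilde L^\transp C$.

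Then verify (iii) via the orthogonal invariance of the Euclidean norm:
\[
\norm{L^{-1}(x-a)}=\norm{\tilde L^\transp C(x-a)}=\norm{C(x-a)}.
\]
Hence $\E(a,L^{-1})=\E(a,C)$. In the same way, $\norm{(L^{-1}/n^{3/2})(x-a)}=\norm{(C/n^{3/2})(x-a)}$, so $\E(a,L^{-1}/n^{3/2})=\E(a,C/n^{3/2})$. The sandwich from Lemma~\ref{lem Lenstra sandwich} therefore transfers verbatim.

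If one wants to rely only on the statement of Theorem~\ref{th two matrices}, the same conclusion follows from (iii) of that theorem. Write $U:=CL$. Then $U^\transp U=I$, so $U$ is orthogonal, and $L^{-1}=U^{-1}C=U^\transp C$. This is the only point needing any care, and it is immediate.

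For the running time, both steps are polynomial, and Theorem~\ref{th two matrices} runs in time polynomial in $\size(A)+\size(C)+\size(\delta)$. This is polynomial in $\size(A)+\size(W)+\size(w)+\size(\delta)$.

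For the ``Furthermore'' clause, take the $\zeta$ of Theorem~\ref{th two matrices}. It depends only on $A$ and $C$, has $\log(\zeta)$ polynomial in $\size(A)+\size(C)$, and is computable in time polynomial in $\size(A)+\size(C)$. The matrix $C$ is a deterministic output of Lenstra's algorithm on $(W,w)$ and does not involve $\delta$, so $\zeta$ depends only on $A,W,w$, and the size and time bounds follow from the polynomial bound on $\size(C)$.

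I expect no real obstacle here. The subtle points are the observation that $CL$ is orthogonal, and noting that $C$ is independent of $\delta$ so that $\zeta$ inherits that independence.
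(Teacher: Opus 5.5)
Your proposal is correct and follows the paper's proof: Lenstra's rounding yields $C,a$, Theorem~\ref{th two matrices} is then applied with this $C$, and (iii) follows from $\norm{L^{-1}(x-a)}=\norm{C(x-a)}$. The paper derives this identity from $M=L^{-\transp}L^{-1}$, which is equivalent to your observation that $CL$ is orthogonal. The running time and the $\zeta$ claims are inherited through the polynomial bound on $\size(C)$, exactly as you describe.
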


\begin{prf}
By Lemma~\ref{lem Lenstra sandwich}, there is an algorithm, running on a Turing machine in time polynomial in $\size(W)+\size(w)$, that computes a nonsingular matrix $C \in \Q^{n \times n}$ and $a \in \Q^n$ such that
\begin{align}
\label{eq elli cont pos}
\E (a, C) \subseteq \P \subseteq \E (a,C / n^{3/2});
\end{align}
in particular, $\size(C)$ and $\size(a)$ are polynomial in $\size(W)+\size(w)$.
We employ Theorem~\ref{th two matrices} (with this $A$ and $C$, the same $\delta$, and $M:=C^\transp C$) and obtain a nonsingular matrix $L \in \Q^{n \times n}$, a diagonal matrix $D \in \Q^{n \times n}$, and a symmetric matrix $E \in \Q^{n \times n}$ such that $L^\transp M L = I$, $L^\transp A L = D+E$ with $\Fnorm{E} \le \delta$, and $A$ and $D$ have the same inertia.
This establishes (i) and (ii).

It remains to prove (iii).
From $L^\transp M L = I$ we obtain $M = L^{-\transp} L^{-1}$, and therefore, for every $x \in \R^n$,
\begin{align*}
\norm{L^{-1} (x - a)}^2 = (x-a)^\transp M (x-a) = \norm{C (x - a)}^2,
\end{align*}
from which we derive $\E (a, L^{-1}) = \E (a, C)$ and $\E (a, L^{-1} / n^{3/2}) = \E (a, C / n^{3/2})$.
Condition (iii) then follows from \eqref{eq elli cont pos}.

\smallskip
\noindent
\textbf{An a priori lower bound on the nonzero diagonal entries of $D$.}
By the ``Furthermore'' clause of Theorem~\ref{th two matrices}, there exists $\zeta\in\Z_{>0}$, depending only on $A$ and $C$ (not on $\delta$), with $\log(\zeta)$ polynomial in $\size(A)+\size(C)$, such that every nonzero entry of $D$ is at least $1/\zeta$ in absolute value, and $\zeta$ can be computed on a Turing machine in time polynomial in $\size(A)+\size(C)$.
Since $\size(C)$ is polynomial in $\size(W)+\size(w)$, both $\log(\zeta)$ and the time to compute $\zeta$ are polynomial in $\size(A)+\size(W)+\size(w)$; note also that $\zeta$ depends only on $A,W,w$ (via $C$), not on $\delta$.

\smallskip
\noindent
\textbf{Running time.}
Computing $C$ and $a$ takes time polynomial in $\size(W)+\size(w)$ (Lemma~\ref{lem Lenstra sandwich}), and forming $M=C^\transp C$ takes a constant number of matrix operations.
By Theorem~\ref{th two matrices}, computing $L,D,E$ takes time polynomial in $\size(A)+\size(C)+\size(\delta)$, hence in $\size(A)+\size(W)+\size(w)+\size(\delta)$.
Hence the entire algorithm runs on a Turing machine in time polynomial in $\size(A)+\size(W)+\size(w)+\size(\delta)$.
\end{prf}

\section{Approximating mixed integer quadratic programming}
\label{sec MIQP}

In this section we prove Theorem~\ref{th MIQP algorithm}.
We first introduce \emph{spherical form} MIQP, a special form of Problem~\ref{prob MIQP} in which the objective function is separable and the polyhedron described by the linear constraints is sandwiched between two concentric balls.
Theorem~\ref{th simultaneous diagonalization} allows us to reduce Problem~\ref{prob MIQP} to this form, and we give an approximation algorithm for spherical form MIQP, based on mesh partitioning and linear underestimators.
Both the reduction and the algorithm are subject to the same proviso: the objective function must vary enough on the feasible region.
Two theorems of the alternative then remove this proviso: applied to an arbitrary instance, they either produce an approximate solution, certifying in the process that the objective function does vary enough, 
or split the instance into sub-instances with one fewer integer variable.
A recursion on these two outcomes yields Theorem~\ref{th MIQP algorithm}.

\subsection{Spherical form MIQP}

To define a spherical form MIQP, we first introduce mixed integer lattices and balls.
Given linearly independent vectors $b^1, \dots , b^n \in \R^n$ and an integer $p \in \{0,\dots,n\}$, we define the \emph{mixed integer lattice} 
$$
\Lambda_p(b^1, \dots , b^n) := \bra{\sum_{i=1}^n \mu_i b^i : \mu_i \in \Z \ \forall i=1,\dots,p, \ \mu_i \in \R \ \forall i=p+1,\dots,n}.
$$
A \emph{ball} with center $a$ and radius $r$ is a set of the form
\begin{align*}
\B(a,r) := \{y \in \R^n : \norm{y-a} \le r\},
\end{align*}
where $a \in \R^n$ and $r > 0$.
Note that $\B(a,r) = \E(a,I / r)$.

A \emph{spherical form MIQP} is an optimization problem of the form
\begin{align}
\label{prob SMIQP}
\tag{S-MIQP}
\begin{split}
\min & \quad y^\transp D y + h^\transp y  \\
\st & \quad Wy \le w  \\
& \quad y \in \Lambda_p(b^1,\dots,b^n) + \{c\},
\end{split}
\end{align}
where $D$ is diagonal and the polyhedron $\{y \in \R^n : Wy \le w\}$ has a \emph{spherical form}:
\begin{align*}
\B(0,1) \subseteq \{y \in \R^n : Wy \le w\} \subseteq \B(0,n^{3/2}).
\end{align*}
In Problem~\ref{prob SMIQP}, $y \in \R^n$ is the vector of variables and the input data consists of a diagonal matrix $D \in \Q^{n \times n}$, vectors $h,c \in \Q^n$, a matrix $W \in \Q^{m \times n}$, a vector $w \in \Q^m$, an integer $p \in \{0,1,\dots,n\}$, and linearly independent vectors $b^1,\dots,b^n \in \Q^n$.
Given an instance of Problem~\ref{prob SMIQP} (i.e., specific data $D,h,W,w,b^1,\dots,b^n,c$), we define its \emph{size} as $\size(D) + \size(h) + \size(W) + \size(w) + \sum_{i=1}^n \size(b^i) + \size(c)$.
Note that the feasible region of Problem~\ref{prob SMIQP} is compact: it is contained in $\B(0,n^{3/2})$, and it is closed because $\Lambda_p(b^1,\dots,b^n)+\{c\}$ is closed.
In particular, if it is nonempty, the objective function is bounded on it and attains its minimum and its maximum.
The two key properties of a spherical form MIQP are thus that its objective function is separable, and that its linear inequalities describe a polytope with a spherical form. The price paid for these properties is that the constraint $x \in \Z^p \times \R^{n-p}$ of Problem~\ref{prob MIQP} is replaced by the less elegant constraint $y \in \Lambda_p(b^1,\dots,b^n) + \{c\}$.
We remark that, due to the different type of lattice constraint, Problem~\ref{prob SMIQP} is technically not of the form~\eqref{prob MIQP}, however it is simple to check that the change of variables $y = B x + c$, where $B$ is the matrix with columns $b^1,\dots,b^n$, maps Problem~\ref{prob SMIQP} into a problem of the form~\eqref{prob MIQP}.

Our definition differs substantially from the spherical form MIQP considered in~\cite{dP23bMPA}, whose shape is dictated by the fixed rank assumption made there: the variables are split into a first block, of fixed dimension, which carries the quadratic part of the objective function and all the integrality constraints, and a second block, on which the objective function is linear and the variables are continuous; the sphericity condition is then imposed not on the polytope itself, but on its orthogonal projection onto the space of the first block, with a constant ratio between the radii of the two balls.
Problem~\ref{prob SMIQP} requires no such splitting: the objective function is separable in all $n$ variables, and the entire polyhedron has a spherical form.
The price is that the ratio between the two radii is $n^{3/2}$ rather than a constant.
In this respect our spherical form is closer to the one used by Vavasis in the continuous setting~\cite{Vav92i}; the difference is that here the feasible region also carries a lattice constraint.

\subsection{Reduction to spherical form}

We need a lemma that quantifies how sensitive approximate solutions are to perturbations of the objective function.

\begin{lemma}[Approximation of perturbed optimization problems]
\label{lem noise approx 1}
Let $\S$ be a nonempty set and let $f,g: \S \to \R$ be such that $f_{\inf} := \inf_{y \in \S} f(y)$ and $f_{\sup} := \sup_{y \in \S} f(y)$ are finite, and define
\begin{align}
\label{prob opt}
& \min \bra{ f(y) : y \in \S }, \\
\label{prob opt hat}
& \min \bra{ f(y) + g(y) : y \in \S }.
\end{align}
Let $y^\diamond$ be an $\epsilon/2$-approximate solution to Problem~\eqref{prob opt}, for $\epsilon \in [0,1]$, and assume $\abs{g(y)} \le \epsilon (f_{\sup} - f_{\inf})/8$ for every $y \in \S$.
Then $y^\diamond$ is an $\epsilon$-approximate solution to Problem~\eqref{prob opt hat}.
\end{lemma}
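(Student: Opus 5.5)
Write $F := f+g$, $\Delta := f_{\sup}-f_{\inf}$, $F_{\inf} := \inf_{y\in\S}F(y)$ and $F_{\sup} := \sup_{y\in\S}F(y)$. The plan is to compare the three quantities that define approximation for Problem~\eqref{prob opt hat} with their counterparts for Problem~\eqref{prob opt}, using only the uniform bound $\abs{g}\le\epsilon\Delta/8$ on $\S$. We then chain the resulting inequalities with the hypothesis $f(y^\diamond)-f_{\inf}\le\frac{\epsilon}{2}\Delta$.

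First, the extrema. Since $f-\epsilon\Delta/8\le F\le f+\epsilon\Delta/8$ pointwise on $\S$, taking infima and suprema gives
\begin{equation*}
\abs{F_{\inf}-f_{\inf}}\le\epsilon\Delta/8, \qquad \abs{F_{\sup}-f_{\sup}}\le\epsilon\Delta/8 .
\end{equation*}
In particular both $F_{\inf}$ and $F_{\sup}$ are finite, and
\begin{equation*}
F_{\sup}-F_{\inf}\ge\Delta-\epsilon\Delta/4\ge\tfrac34\Delta ,
\end{equation*}
where the last step uses $\epsilon\le1$.

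Second, the value at $y^\diamond$. We bound
\begin{equation*}
F(y^\diamond)-F_{\inf}\le \bigl(f(y^\diamond)+\epsilon\Delta/8\bigr)-\bigl(f_{\inf}-\epsilon\Delta/8\bigr)\le \tfrac{\epsilon}{2}\Delta+\tfrac{\epsilon}{4}\Delta=\tfrac34\epsilon\Delta .
\end{equation*}
Combining this with the previous step yields
\begin{equation*}
F(y^\diamond)-F_{\inf}\le\tfrac34\epsilon\Delta\le\epsilon\,(F_{\sup}-F_{\inf}),
\end{equation*}
which is exactly the definition of $y^\diamond$ being an $\epsilon$-approximate solution to Problem~\eqref{prob opt hat}. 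Feasibility is immediate, since $y^\diamond\in\S$.

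The only point requiring care is the degenerate case $\Delta=0$. There $g\equiv0$ on $\S$, so $F=f$ is constant on $\S$, and any point of $\S$ is trivially $\epsilon$-approximate; the chain above also covers this case without modification, since every term is zero. The main obstacle is simply keeping the constants consistent: the loss of $\epsilon\Delta/4$ in the denominator must be absorbed using $\epsilon\le1$, and the constant $1/8$ in the hypothesis is exactly what makes $\frac12+\frac14=\frac34$ match the lower bound $\frac34\Delta$.
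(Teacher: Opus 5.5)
Your proof is correct and follows the paper's argument essentially step for step. You bound the range of $f+g$ below by $(1-\epsilon/4)(f_{\sup}-f_{\inf})$, bound $(f+g)(y^\diamond)-\inf_{\S}(f+g)$ above by $\frac34\epsilon(f_{\sup}-f_{\inf})$, and combine the two using $\epsilon\le1$. The only cosmetic difference is that you invoke $\epsilon\le1$ before combining, obtaining the range bound $\frac34(f_{\sup}-f_{\inf})$, whereas the paper divides by $1-\epsilon/4$ at the end; your ordering also makes the degenerate case $f_{\sup}=f_{\inf}$ transparent.
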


\begin{prf}
Let $\hat f(y) := f(y) + g(y)$ and $\Delta := \epsilon (f_{\sup} - f_{\inf})/8$.
Let $\hat f_{\inf} := \inf_{y \in \S} \hat f(y)$ and $\hat f_{\sup} := \sup_{y \in \S} \hat f(y)$, which are both finite since $\abs{g(y)}\le\Delta$ for every $y \in \S$.
First, note that
\begin{align*}
\hat f_{\sup} - \hat f_{\inf} 
& \ge f_{\sup} - f_{\inf} - 2 \Delta \\
& = (1- \epsilon/4) (f_{\sup} - f_{\inf}),
\end{align*}
where we used $\hat f_{\sup}\ge f_{\sup}-\Delta$ and $\hat f_{\inf}\le f_{\inf}+\Delta$.
We obtain
\begin{align*}
\hat f(y^\diamond) - \hat f_{\inf} 
& \le f(y^\diamond) - f_{\inf} + 2 \Delta \\
& \le \epsilon(f_{\sup} - f_{\inf})/2 + \epsilon (f_{\sup} - f_{\inf})/4 \\
& = (1/2 + 1/4) \epsilon (f_{\sup} - f_{\inf}) \\
& \le \frac{1/2 + 1/4}{1-\epsilon/4} \epsilon (\hat f_{\sup} - \hat f_{\inf}) \\
& \le \epsilon (\hat f_{\sup} - \hat f_{\inf}),
\end{align*}
where the first inequality uses $\hat f(y^\diamond)\le f(y^\diamond)+\Delta$ and $\hat f_{\inf}\ge f_{\inf}-\Delta$, the second uses that $y^\diamond$ is an $\epsilon/2$-approximate solution to Problem~\eqref{prob opt} together with $2\Delta = \epsilon(f_{\sup}-f_{\inf})/4$, the fourth uses the inequality above, and the last holds for $\epsilon \in [0,1]$.
Thus, $y^\diamond$ is an $\epsilon$-approximate solution to Problem~\eqref{prob opt hat}.
\end{prf}

We are now ready to show that Problem~\ref{prob MIQP} can be reduced to a spherical form MIQP.
Unlike in the corresponding reduction in \cite{dP23bMPA}, the obtained spherical form MIQP is not equivalent to the original problem; instead, approximate solutions to the spherical form MIQP yield approximate solutions to the original problem, provided that the objective function varies enough over the feasible region.

\begin{proposition}[Reduction to spherical form MIQP]
\label{prop reduction to SMIQP}
Consider an instance of Problem~\ref{prob MIQP} such that $\P := \{x \in \R^n: Wx \le w\}$ is full-dimensional and bounded and $H$ is nonzero, let $\epsilon\in(0,1]$ be rational, and let $\kappa$ be a positive rational absolute constant.
There is an algorithm that computes a nonsingular matrix $L \in \Q^{n \times n}$, a diagonal matrix $D \in \Q^{n \times n}$, and a vector $a \in \Q^n$ such that:
(i)
$H$ and $D$ have the same inertia.
(ii) The following problem is a spherical form MIQP, where $b^1,\dots,b^n$ denote the columns of $L^{-1}$:
\begin{align}
\label{prob from MIQP to SMIQP}
\begin{split}
\min & \quad y^\transp D y + (h^\transp L + 2 a^\transp HL) y \\
\st & \quad WLy \le w - Wa \\
& \quad y \in \Lambda_p(b^1,\dots,b^n) - \{L^{-1} a\}.
\end{split}
\end{align}
(iii)
Let $f$ denote the objective function of Problem~\eqref{prob from MIQP to SMIQP}, let $f_{\inf}$ and $f_{\sup}$ be its infimum and supremum on the feasible region of that problem, and let $\gamma_{\min}$ be the smallest absolute value among the nonzero entries of $D$, which exists since $H$ is nonzero and by (i).
If $y^\diamond$ is an $\epsilon/2$-approximate solution to Problem~\eqref{prob from MIQP to SMIQP} and $f_{\sup} - f_{\inf} \ge \kappa \gamma_{\min}$, then $x^\diamond := L y^\diamond + a$ is an $\epsilon$-approximate solution to Problem~\ref{prob MIQP}.
The algorithm runs on a Turing machine in time polynomial in the size of the instance of Problem~\ref{prob MIQP} and in $\size(\epsilon)$.
\end{proposition}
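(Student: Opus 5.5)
The plan is to invoke Theorem~\ref{th simultaneous diagonalization} with $A:=H$, the system $Wx\le w$, and a tolerance $\delta$ chosen below. This yields a rational nonsingular $L$, a diagonal $D$, a symmetric $E$, and $a\in\Q^n$ with $L^\transp HL=D+E$, $\Fnorm E\le\delta$, the same inertia for $H$ and $D$ (giving (i)), and $\E(a,L^{-1})\subseteq\P\subseteq\E(a,L^{-1}/n^{3/2})$. The number $\zeta$ from the ``Furthermore'' clause gives $\gamma_{\min}\ge1/\zeta$ a priori, independently of $\delta$. We therefore fix $\delta$ before calling the theorem, as a rational of polynomial size, for instance
$$\delta:=\min\bra{1,\ \frac{\epsilon\kappa}{8\,n^3\,\zeta}}.$$
Since $\log\zeta$ is polynomial, $\size(\delta)$ is polynomial in the instance size and $\size(\epsilon)$, so the running time follows from Theorem~\ref{th simultaneous diagonalization}.

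For (ii), I substitute $x=Ly+a$. The constraint $Wx\le w$ becomes $WLy\le w-Wa$. The condition $x\in\Z^p\times\R^{n-p}=\Lambda_p(e^1,\dots,e^n)$ becomes $y\in L^{-1}\Lambda_p(e^1,\dots,e^n)-L^{-1}a=\Lambda_p(b^1,\dots,b^n)-\{L^{-1}a\}$. The objective expands to
$$y^\transp(D+E)y+(h^\transp L+2a^\transp HL)y+\text{const}.$$
Sphericity comes from the ellipsoid identities: $y\in\B(0,r)$ if and only if $x\in\E(a,L^{-1}/r)$. Hence $\B(0,1)\subseteq\{y:WLy\le w-Wa\}\subseteq\B(0,n^{3/2})$. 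Since $D$ is diagonal and all data are rational of polynomial size, Problem~\eqref{prob from MIQP to SMIQP} is a spherical form MIQP.

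For (iii), the original objective, written in $y$ and with the constant dropped (harmless by the invariance of $\epsilon$-approximation), equals $f(y)+g(y)$ with $g(y):=y^\transp Ey$, over the same feasible set. On the feasible region $\norm y\le n^{3/2}$, so
$$\abs{g(y)}\le\norm E\,n^3\le\delta n^3\le\frac{\epsilon\kappa}{8\zeta}\le\frac{\epsilon\kappa\gamma_{\min}}{8}\le\frac{\epsilon(f_{\sup}-f_{\inf})}{8}.$$
Here $f_{\inf}$ and $f_{\sup}$ are finite by compactness; if the region is empty there is nothing to prove. Lemma~\ref{lem noise approx 1} then shows that $y^\diamond$ is $\epsilon$-approximate for the perturbed problem. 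Since the affine bijection $x=Ly+a$ maps the two feasible sets onto each other and transforms the objectives exactly, $x^\diamond$ is $\epsilon$-approximate for Problem~\ref{prob MIQP}.

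The main obstacle is circularity: $\delta$ must depend on $\gamma_{\min}$, which is only known after $D$ is computed. This is resolved by the $\delta$-independent lower bound $1/\zeta$ from Theorem~\ref{th simultaneous diagonalization}. A remaining bookkeeping point is confirming that the lattice translate and the linear term are rational of polynomial size.
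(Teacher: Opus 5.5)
Your proposal is correct and follows the paper's proof essentially step for step. It uses the same $\delta$-independent bound $\gamma_{\min}\ge 1/\zeta$ from the ``Furthermore'' clause of Theorem~\ref{th simultaneous diagonalization}, the same choice $\delta=\min\{1,\epsilon\kappa/(8n^3\zeta)\}$, the same change of variables $y=L^{-1}(x-a)$ for sphericity and the lattice constraint, and the same estimate $\abs{y^\transp E y}\le n^3\delta\le\epsilon(f_{\sup}-f_{\inf})/8$ fed into Lemma~\ref{lem noise approx 1}.
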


\begin{prf}
By the ``Furthermore'' clause of Theorem~\ref{th simultaneous diagonalization} (applied with $A:=H$, the given $W,w$, and any rational $\delta\in(0,1]$), there exists $\zeta\in\Z_{>0}$, depending only on $H,W,w$ (not on $\delta$), with $\log(\zeta)$ polynomial in the size of the instance, such that, for every choice of $\delta$, every nonzero entry of the resulting diagonal matrix $D$ is at least $1/\zeta$ in absolute value;
moreover, $\zeta$ can be computed on a Turing machine in time polynomial in $\size(H)+\size(W)+\size(w)$.

We set
$$
\delta := \min\bra{1, \ \frac{\epsilon \kappa}{8 n^3 \zeta}} \in (0,1],
$$
and employ Theorem~\ref{th simultaneous diagonalization} with $A := H$, the given $W,w$, and this $\delta$, obtaining a nonsingular matrix $L \in \Q^{n \times n}$, a diagonal matrix $D \in \Q^{n \times n}$, a symmetric matrix $E \in \Q^{n \times n}$, and $a \in \Q^n$ such that 
$L^\transp H L = D + E$, 
$H$ and $D$ have the same inertia, 
$\Fnorm{E} \le \delta$,
and
$
\E (a, L^{-1}) \subseteq \P \subseteq \E (a, L^{-1} / n^{3/2}).
$
Since $H$ is nonzero and symmetric, it has a nonzero eigenvalue; since $H$ and $D$ have the same inertia, $D$ also has a nonzero diagonal entry, so $\gamma_{\min}$ is well-defined, and, by the choice of $\zeta$ above, $\gamma_{\min}\ge1/\zeta$.

With the change of variables $y = L^{-1} (x-a)$, we obtain
\begin{align*}
\B(0,1) \subseteq \{y \in \R^n : WLy \le w - Wa \} \subseteq \B(0,n^{3/2}).
\end{align*}
Since the objective function of Problem~\eqref{prob from MIQP to SMIQP} is separable (as $D$ is diagonal) and the polyhedron described by its linear constraints has this spherical form, Problem~\eqref{prob from MIQP to SMIQP} is a spherical form MIQP, establishing (ii).
At this point, we have proven (i) and (ii) and have obtained all matrices and vectors that appear as data in Problem~\eqref{prob from MIQP to SMIQP}. 
The remainder of the proof is devoted to proving (iii).

Let $y^\diamond$ be an $\epsilon/2$-approximate solution to Problem~\eqref{prob from MIQP to SMIQP}.
Our goal is to show that $x^\diamond := L y^\diamond + a$ is an $\epsilon$-approximate solution to Problem~\ref{prob MIQP}.
Since the definition of~$\epsilon$-approximate solution is preserved under changes of variables and translations of the objective function, it suffices to show that $y^\diamond$ is an $\epsilon$-approximate solution to the following problem, obtained from Problem~\ref{prob MIQP} by first performing the change of variables $y = L^{-1} (x-a)$, and then by dropping the constant $a^\transp H a + h^\transp a$ in the objective function:
\begin{align}
\label{prob MIQPy}
\begin{split}
\min & \quad y^\transp (D+E) y + (h^\transp L + 2 a^\transp HL) y \\
\st & \quad WLy \le w - Wa \\
& \quad y \in \Lambda_p(b^1,\dots,b^n) - \{L^{-1} a\}.
\end{split}
\end{align}
To obtain the last constraint, note that 
$Ly + a \in \Z^p \times \R^{n-p}$
if and only if
$y \in L^{-1} (\Z^p \times \R^{n-p}) - \{L^{-1} a\}$
and
\begin{align*}
L^{-1} (\Z^p \times \R^{n-p})
& =
L^{-1} \bra{\sum_{i=1}^n \mu_i e^i : \mu_i \in \Z \ \forall i=1,\dots,p, \ \mu_i \in \R \ \forall i=p+1,\dots,n} \\
& =
\bra{\sum_{i=1}^n \mu_i L^{-1} e^i : \mu_i \in \Z \ \forall i=1,\dots,p, \ \mu_i \in \R \ \forall i=p+1,\dots,n} \\
& =
\bra{\sum_{i=1}^n \mu_i b^i : \mu_i \in \Z \ \forall i=1,\dots,p, \ \mu_i \in \R \ \forall i=p+1,\dots,n} \\
& =
\Lambda_p(b^1,\dots,b^n).
\end{align*}

The only difference between Problem~\eqref{prob from MIQP to SMIQP} and Problem~\eqref{prob MIQPy} is the presence of the quadratic function $y^\transp E y$ in the objective function of Problem~\eqref{prob MIQPy}.
We now bound the absolute value of this quadratic function, for all $y \in \B(0,n^{3/2})$, as follows:
\begin{align*}
\abs{y^\transp E y} 
& \le \norm{y}^2 \norm{E} \le n^3 \Fnorm{E} \le n^3 \delta \\
& \le \epsilon \kappa/(8 \zeta) \le \epsilon \kappa \gamma_{\min}/8 \le \epsilon (f_{\sup} - f_{\inf})/8,
\end{align*}
where the third inequality holds by definition of $\delta$ (it is an equality if $\delta = \epsilon\kappa/(8n^3\zeta)$, and otherwise $\delta=1\le\epsilon\kappa/(8n^3\zeta)$), and the second-to-last inequality uses $\gamma_{\min} \ge 1/\zeta$, established above.

The feasible region of Problem~\eqref{prob from MIQP to SMIQP} is nonempty, since it contains $y^\diamond$, and compact, since Problem~\eqref{prob from MIQP to SMIQP} is a spherical form MIQP; hence $f_{\inf}$ and $f_{\sup}$ are finite.
We now apply Lemma~\ref{lem noise approx 1} with \eqref{prob opt} := \eqref{prob from MIQP to SMIQP} and \eqref{prob opt hat} := \eqref{prob MIQPy}.
We obtain that $y^\diamond$ is an $\epsilon$-approximate solution to Problem~\eqref{prob MIQPy}, which, as observed above, implies that $x^\diamond$ is an $\epsilon$-approximate solution to Problem~\ref{prob MIQP}.
This proves (iii).

\smallskip
\noindent
\textbf{Running time.}
As noted above, $\zeta$ is computable in time polynomial in the size of the instance, and $\log(\zeta)$ is polynomial in the size of the instance; hence $\delta=\min\{1,\epsilon\kappa/(8n^3\zeta)\}$ is computable in polynomial time and is a rational of size polynomial in the size of the instance and in $\size(\epsilon)$ (as $\kappa$ is a fixed constant). By Theorem~\ref{th simultaneous diagonalization}, computing $L,D,E,a$ takes time polynomial in the size of the instance and in $\size(\delta)$, hence in the size of the instance and in $\size(\epsilon)$; computing the columns $b^1,\dots,b^n$ of $L^{-1}$ takes a constant number of further matrix operations. Hence the entire algorithm runs on a Turing machine in time polynomial in the size of the instance and in $\size(\epsilon)$.
\end{prf}

\subsection{Approximation algorithm for Problem~\ref{prob SMIQP}}

In this section we show that we can find an $\epsilon$-approximate solution to Problem~\ref{prob SMIQP}, provided that the objective function varies enough on the feasible region.
The algorithm and its analysis are not new: they follow closely a well-established line of work on mesh partitioning and linear underestimators, and we present them here in the precise form needed later.
The technique was applied in the continuous case by Vavasis~\cite{Vav92c,Vav92i}, and in the mixed integer case in~\cite{dP16IPCO,dP18MPB,dP23bMPA}; our treatment borrows from both, with only minor modifications.
As in the mixed integer papers, the feasible region carries a lattice constraint, which is absent in the continuous setting.
As in~\cite{Vav92i}, we linearize the objective function only in the concave directions, leaving the remaining, already convex, quadratic terms unchanged; as a result, the subproblems are mixed integer convex quadratic programs, rather than the mixed integer linear programs obtained in~\cite{dP16IPCO,dP18MPB,dP23bMPA}, and we solve them using the polynomial-time algorithm established in~\cite{dP25SIOPT}.

Formally, a \emph{mixed integer convex quadratic program (MICQP)} is a problem of the form~\eqref{prob MIQP} whose objective function is convex, or, equivalently, whose matrix $H$ is positive semidefinite.
We will make use of the following consequence of \cite{dP25SIOPT}.

\begin{lemma}[\cite{dP25SIOPT}]
\label{lem MICQP}
Assume that, in Problem~\ref{prob MIQP}, the matrix $H$ is positive semidefinite and $\{x \in \R^n: Wx \le w\}$ is bounded.
There is an algorithm that decides whether Problem~\ref{prob MIQP} is infeasible, or has an optimal solution, and in the latter case finds an optimal solution.
The algorithm runs on a Turing machine in time polynomial in the size of the instance, provided that the number $p$ of integer variables is fixed.
\end{lemma}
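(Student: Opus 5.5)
The plan is to derive the lemma directly from the main theorem of~\cite{dP25SIOPT}, checking only that its hypotheses hold and that the extra boundedness assumption removes the one outcome not listed in the statement. That theorem covers Problem~\ref{prob MIQP} with a positive semidefinite $H$ and fixed $p$. I will use it in the form: there is an algorithm running on a Turing machine in time polynomial in the size of the instance that does one of three things. It either (a) certifies that the problem is infeasible, or (b) certifies that the objective function is unbounded below on the feasible region, or (c) returns an optimal solution, whose size is polynomially bounded. Our instance has exactly these hypotheses: $H$ is positive semidefinite, the data are rational, and $p$ is fixed. So we can run that algorithm unchanged and inherit its running time.

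Next I will show that outcome (b) cannot occur when $\P:=\{x\in\R^n : Wx\le w\}$ is bounded, and that in the feasible case an optimum exists. Since $\P$ is bounded, the projection of $\P$ onto the first $p$ coordinates is bounded, so it contains only finitely many points $z\in\Z^p$. For each such $z$, the slice $\{y\in\R^{n-p} : (z,y)\in\P\}$ is a polytope, hence compact. The feasible region is therefore a finite union of compact sets, so it is compact. The objective function is continuous, so if the feasible region is nonempty it attains its minimum and is bounded below. Consequently the algorithm of~\cite{dP25SIOPT} returns either (a) or (c), which is exactly the dichotomy claimed.

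I expect the only delicate step to be matching the precise formulation in~\cite{dP25SIOPT}. That paper may state its result for a slightly different normal form, for instance with equality constraints or with a separate unboundedness certificate. If so, I will first rewrite the instance in the required form by a polynomial-size transformation, such as splitting variables or adding slacks. Such a transformation keeps $p$ fixed, preserves positive semidefiniteness of the Hessian, and maps optimal solutions back to optimal solutions. If that theorem assumes feasibility is known beforehand, I will first decide feasibility with Lenstra's algorithm~\cite{Len83} for fixed $p$, which also runs in polynomial time.
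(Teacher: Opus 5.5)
Your proposal is correct and matches the paper, which gives no separate proof and simply presents the lemma as a direct consequence of the main result of~\cite{dP25SIOPT} for convex MIQP with $p$ fixed; boundedness of $\{x \in \R^n : Wx \le w\}$ is there only to rule out the unbounded outcome. Your compactness argument handles this correctly (equivalently, the feasible region is the intersection of a polytope with the closed set $\Z^p \times \R^{n-p}$), so a feasible instance always has an optimal solution.
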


We will also use the following standard lemma, whose short proof we include for completeness.


\begin{lemma}
\label{lem 5}
Let $q(\lambda)=a \lambda^2+b \lambda+c$ be a univariate quadratic function with $a \le 0$, and let $\ell, u \in \R$ with $\ell \le u$. Let $\tilde q(\lambda)$ be the affine univariate function that attains the same values as $q$ at $\ell, u$. Then, for every $\lambda \in [\ell,u]$,
$$
\tilde q(\lambda) \le q(\lambda) \le \tilde q(\lambda) + \abs{a}(u-\ell)^2/4.
$$
\end{lemma}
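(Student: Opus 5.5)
I plan to work directly with the difference $r(\lambda) := q(\lambda) - \tilde q(\lambda)$. Since $\tilde q$ is affine and agrees with $q$ at $\ell$ and $u$, the function $r$ is a quadratic with leading coefficient $a$ that vanishes at $\ell$ and at $u$. Hence, in the case $\ell < u$, it factors as
$$
r(\lambda) = -a(\lambda-\ell)(u-\lambda) = \abs{a}(\lambda-\ell)(u-\lambda),
$$
using $a \le 0$. To justify this, I would note that $r(\lambda) - a(\lambda-\ell)(\lambda-u)$ has degree at most one and vanishes at the two distinct points $\ell, u$, so it is identically zero.

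For the lower bound, on $[\ell,u]$ both factors $\lambda-\ell$ and $u-\lambda$ are nonnegative, so $r(\lambda) \ge 0$, which gives $\tilde q \le q$.

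For the upper bound, by AM--GM the product $(\lambda-\ell)(u-\lambda)$ of two nonnegative numbers whose sum is $u-\ell$ is at most $(u-\ell)^2/4$. Therefore $r(\lambda) \le \abs{a}(u-\ell)^2/4$, which is the right-hand inequality.

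The degenerate case $\ell = u$ needs a separate word, since then $\tilde q$ is only pinned down by its single value $q(\ell)$, and the factorization argument does not apply. But the interval is the single point $\lambda=\ell$, where $\tilde q(\ell)=q(\ell)$ and $(u-\ell)^2=0$, so both inequalities hold with equality. None of the steps is a real obstacle; the only care needed is the sign bookkeeping with $a\le0$ and this degenerate case.
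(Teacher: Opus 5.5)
Your proposal is correct and follows essentially the same argument as the paper: factor $q-\tilde q=\abs{a}(\lambda-\ell)(u-\lambda)$, then use nonnegativity of both factors on $[\ell,u]$ for the lower bound and the bound $(\lambda-\ell)(u-\lambda)\le (u-\ell)^2/4$ for the upper bound. Your justification of the factorization via the degree-one difference vanishing at two distinct points, and your separate handling of the degenerate case $\ell=u$ (where $\tilde q$ is not uniquely determined), are small refinements that the paper's proof leaves implicit.
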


\begin{prf}
The function $q-\tilde q$ is a quadratic with leading coefficient $a$ that vanishes at $\ell$ and at $u$, hence $q(\lambda)-\tilde q(\lambda) = a(\lambda-\ell)(\lambda-u) = \abs{a}(\lambda-\ell)(u-\lambda)$.
For $\lambda\in[\ell,u]$ both factors are nonnegative, so this quantity is at least $0$, and their product is at most $((u-\ell)/2)^2$, so it is at most $\abs{a}(u-\ell)^2/4$.
\end{prf}

To simplify the notation, in the remainder of the paper we denote the objective function of Problem~\ref{prob SMIQP} by 
\begin{align*}
\objS(y) := y^\transp D y + h^\transp y.
\end{align*}
We also denote by $\objSinf$, $\objSsup$ the infimum and supremum of $\objS$ on the feasible region of Problem~\ref{prob SMIQP}.

\begin{proposition}[Approximation algorithm for Problem~\ref{prob SMIQP}]
\label{prop SMIQP algorithm}
Let $\kappa$ be a positive rational absolute constant and let $\epsilon\in(0,1]$ be rational.
Assume that Problem~\ref{prob SMIQP} is feasible, that $D$ has at least one negative entry, and that $\objSsup - \objSinf \ge \kappa \gamma_{\mathrm{neg}}$, where $\gamma_{\mathrm{neg}}$ is the largest absolute value among the negative entries of $D$.
There is an algorithm that finds an $\epsilon$-approximate solution to Problem~\ref{prob SMIQP}.
The algorithm runs on a Turing machine in time polynomial in the size of the instance, in $\size(\epsilon)$, and in $1/\epsilon$, provided that $p$ and the number $k$ of negative entries of $D$ are fixed.
\end{proposition}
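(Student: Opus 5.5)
Since $D$ is diagonal, I will reorder coordinates so that $\objS(y) = \sum_{i=1}^k \pare{d_i y_i^2 + h_i y_i} + g(y_{k+1},\dots,y_n)$, where $d_1,\dots,d_k<0$ are the negative entries of $D$ and $g$ is a convex separable quadratic. Every feasible point lies in $\B(0,n^{3/2})$, so each concave coordinate satisfies $y_i\in[-n^{3/2},n^{3/2}]$. Fix an integer $t$, to be chosen later. For $i=1,\dots,k$ I split $[-n^{3/2},n^{3/2}]$ into $t$ intervals of equal length $2n^{3/2}/t$. This yields a mesh of $t^k$ boxes $\prod_{i=1}^k[\low_i,\upp_i]$ in the concave variables.

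On each box I replace every concave term $d_iy_i^2+h_iy_i$ by its secant on $[\low_i,\upp_i]$, keeping the convex part unchanged. This gives a function $\linearized$ that is convex. By Lemma~\ref{lem 5}, $\linearized \le \objS \le \linearized + \sum_{i=1}^k\abs{d_i}(\upp_i-\low_i)^2/4$ on the box, and the error term is at most $k\gamma_{\mathrm{neg}}n^3/t^2$. The subproblem consists of minimizing $\linearized$ over the feasible region intersected with the box. I turn it into an MICQP instance of the form~\eqref{prob MIQP} via $y=Bx+c$, which adds $2k$ rational box inequalities; the polyhedron stays bounded and the objective stays convex and rational. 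Lemma~\ref{lem MICQP} then decides feasibility and returns an optimal solution in polynomial time for fixed $p$. Since all boxes cover the feasible region and the original problem is feasible, at least one subproblem is feasible.

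Let $y^\diamond$ be the returned point minimizing $\objS$ among all the subproblem optima. Let $y^*$ be a true minimizer of $\objS$, which exists by compactness, and let $\bar y$ be the optimum found on a box containing $y^*$. Then
\[
\objS(y^\diamond)\le \objS(\bar y)\le \linearized(\bar y)+\frac{kn^3\gamma_{\mathrm{neg}}}{t^2}\le \linearized(y^*)+\frac{kn^3\gamma_{\mathrm{neg}}}{t^2}\le \objSinf+\frac{kn^3\gamma_{\mathrm{neg}}}{t^2}.
\]
Using the hypothesis $\objSsup-\objSinf\ge\kappa\gamma_{\mathrm{neg}}$, it suffices to choose $t:=\ceil{\sqrt{kn^3/(\kappa\epsilon)}}$. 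The number of subproblems is then $t^k=O\pare{(kn^3/(\kappa\epsilon))^{k/2}}+$ lower order, which is polynomial in $n$ and $1/\epsilon$ for fixed $k$ since $\kappa$ is a constant.

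I expect the main obstacle to be the Turing-machine bookkeeping rather than the approximation argument. The box endpoints must be rational, so I will take $n^{3/2}$ replaced by the integer bound $n^2$ (or $\ceil{n^{3/2}}$), which only changes constants. The secant coefficients $d_i(\low_i+\upp_i)+h_i$ and the constants must have polynomial size. The transformed subproblem data $B,c$ must have size polynomial in the instance size, $\size(\epsilon)$ and $\log t$. With those checks, the total running time is $t^k$ times a polynomial, as claimed.
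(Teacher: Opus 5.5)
Your proposal is correct and follows essentially the same route as the paper: a mesh of boxes in the $k$ concave coordinates (the paper uses exactly your fix $r:=n^2$, with $\varphi:=\ceilL{r\sqrt{k/(\kappa\epsilon)}}$ subintervals per coordinate), secant underestimators via Lemma~\ref{lem 5}, one MICQP per box solved by Lemma~\ref{lem MICQP}, and the same chain of inequalities combined with the hypothesis $\objSsup-\objSinf\ge\kappa\gamma_{\mathrm{neg}}$. The only cosmetic difference is the selection rule: the paper returns the optimum of the box subproblem with the smallest \emph{linearized} optimal value, whereas you return the subproblem optimum with the smallest true value $\objS$, and both rules support the same inequality chain.
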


\begin{prf}
In this proof, we denote by $d_i$ the $i$th diagonal entry of $D$.
After possibly renaming the variables, we assume without loss of generality that $d_1 \le d_2 \le \cdots \le d_n$.
We denote by $k \ge 1$ the number of negative entries of $D$, thus we have $d_1 \le \cdots \le d_k < 0 \le d_{k+1} \le \cdots \le d_n$.
We then have $\gamma_{\mathrm{neg}} = \max_{i=1,\dots,k}\abs{d_i}.$

\smallskip
\noindent
\textbf{The algorithm.}
We start by describing the approximation algorithm.
Let $r := n^2$ (so that $\B(0,n^{3/2})\subseteq\B(0,r)$, since $n^{3/2}\le n^2$ for $n\ge1$).
Next, we define $\varphi^k$ boxes in the first $k$ coordinates of $\R^n$, where $\varphi := \ceilL{r\sqrt{k/(\kappa\epsilon)}}$.
Namely, for each $j_1,\dots,j_k \in \{1,\dots, \varphi\}$, we define the box
\begin{align}
\label{eq boxes}
\C_{j_1,\dots,j_k} := 
\bra{y \in \R^n : -r + \frac{2r}{\varphi} (j_i - 1) \le y_i \le -r + \frac{2r}{\varphi} j_i, \ \forall i =1,\dots,k}.
\end{align}
Note that the union of these $\varphi^k$ boxes is the polyhedron
\begin{align*}
\{y \in \R^n : -r \le y_i \le r, \ \forall i =1,\dots,k\},
\end{align*}
which contains $\B(0,r)$.

We now fix one box $\C = \prod_{i=1}^k [\low_i,\upp_i] \times \R^{n-k}$ among those defined in \eqref{eq boxes}.
For the box $\C$, we construct the affine functions $\linearized_i : \R \to \R$ that attain the same values as $d_i y_i^2$
at $\low_i,\upp_i$, for $i=1,\dots,k$:
\begin{align*}
\linearized_i(y_i) := d_i(\low_i+\upp_i) y_i - d_i \low_i \upp_i 
\qquad \forall i = 1,\dots, k.
\end{align*}
Then we define the affine function $\linearized: \R^{k} \to \R$ given by
\begin{align}
\label{eq underestimator}
\linearized(y_1,\dots,y_k) := \sum_{i=1}^k \linearized_i(y_i),
\end{align}
and we formulate the optimization problem obtained from Problem~\ref{prob SMIQP} by substituting, in the objective function, $\sum_{i=1}^k d_iy_i^2$ with $\linearized(y_1,\dots,y_k)$ and adding the $2k$ linear inequalities in \eqref{eq boxes} that enforce $y \in \C$:
\begin{align}
\label{prob MICQP on box}
\begin{split}
\min & \quad 
\sum_{i=1}^k \pare{d_i(\low_i+\upp_i) y_i - d_i \low_i \upp_i}
+ \sum_{i=k+1}^n d_i y_i^2
+ h^\transp y \\
\st & \quad Wy \le w  \\
& \quad
\ell_i \le y_i \le u_i \qquad \forall i =1,\dots,k \\
& \quad y \in \Lambda_p(b^1,\dots,b^n) + \{c\}.
\end{split}
\end{align}
We show that the change of variables $y = B x + c$, where $B$ is the matrix with columns $b^1,\dots,b^n$, maps Problem~\eqref{prob MICQP on box} to a MICQP defined on a bounded polyhedron, so that we can apply Lemma~\ref{lem MICQP}.
First, this change of variables maps the constraint $y \in \Lambda_p(b^1,\dots,b^n) + \{c\}$ to the constraint $x \in \Z^p \times \R^{n-p}$.
Second, since $B$ is nonsingular, the map $y=Bx+c$ is an invertible affine transformation, and so sends the polyhedron $\{y \in \R^n : Wy \le w,\ \ell_i \le y_i \le u_i \ \forall i=1,\dots,k\}$ to a polyhedron in $x$, namely $\{x \in \R^n : WBx \le w-Wc,\ \ell_i \le (Bx+c)_i \le u_i \ \forall i=1,\dots,k\}$.
Note that the latter polyhedron is bounded, since the polyhedron $\{y \in \R^n: Wy \le w\}$ is already bounded, by the spherical form of Problem~\ref{prob SMIQP}.
Finally, the Hessian of the objective function of Problem~\eqref{prob MICQP on box}, in the variable $y$, is the diagonal matrix with $0$ in each of the first $k$ diagonal entries (since these terms have been linearized) and $d_i\ge0$ in each of the remaining diagonal entries, for $i=k+1,\dots,n$; this matrix is positive semidefinite.
Since $B$ is nonsingular, the Hessian of the objective function of Problem~\eqref{prob MICQP on box} in the variable $x$ is obtained from this matrix by a congruence transformation, and hence, by Sylvester's law of inertia, is positive semidefinite as well.
This shows that Problem~\eqref{prob MICQP on box}, expressed in the variable $x$, is indeed a MICQP.

We now apply Lemma~\ref{lem MICQP} to each Problem~\eqref{prob MICQP on box}.
Since Problem~\ref{prob SMIQP} is feasible and the boxes constructed in \eqref{eq boxes} cover $\B(0,r)$, and hence the entire feasible region of Problem~\ref{prob SMIQP}, at least one of the MICQPs \eqref{prob MICQP on box} is feasible, and therefore has an optimal solution by Lemma~\ref{lem MICQP}.
Among the MICQPs \eqref{prob MICQP on box} that are feasible, the approximation algorithm selects one whose optimal value is smallest, and returns an optimal solution $y^\diamond$ of that MICQP.
This concludes the description of the algorithm.

\smallskip
\noindent
\textbf{Approximation guarantee.}
Next, we show that $y^\diamond$ is an $\epsilon$-approximate solution to 
Problem~\ref{prob SMIQP}.

We first derive an upper bound on $\objS(y^\diamond) - \objSinf$ that depends linearly on $\gamma_{\mathrm{neg}}$.
Let $\C \subseteq \R^n$ be a box constructed in \eqref{eq boxes}, say 
$\C = \prod_{i=1}^k [\low_i,\upp_i] \times \R^{n-k}$.
For each $i = 1,\dots, k,$ we apply Lemma~\ref{lem 5} to each univariate quadratic function $d_i y_i^2$ (using $d_i \le 0$ for $i=1,\dots,k$) and points $\low_i, \upp_i$.
Since $\upp_i - \low_i = 2 r / \varphi$ and $\abs{d_i} \le \gamma_{\mathrm{neg}}$ for $i=1,\dots,k$, we obtain that, for every $y \in \C$,
\begin{align*}
\linearized_i(y_i) & \le d_i y_i^2 \le \linearized_i(y_i) + \gamma_{\mathrm{neg}} r^2 /\varphi^2.
\end{align*}
We sum up all these inequalities for $i=1,\dots,k$ and obtain that for every $y \in \C$,
\begin{align}
\label{eq claim sub}
\linearized(y_1,\dots,y_k) \le 
\sum_{i=1}^k d_i y_i^2
\le \linearized(y_1,\dots,y_k) + \gamma_{\mathrm{neg}} k r^2 / \varphi^2.
\end{align}
Let $\C^\diamond$ be the box among \eqref{eq boxes} that yields the solution $y^\diamond$ and let $\linearized^\diamond$ be the corresponding affine function defined in \eqref{eq underestimator}.
Let $y^*$ be an optimal solution to Problem~\ref{prob SMIQP}, so that $\objS(y^*) = \objSinf$, let $\C^*$ be a box among \eqref{eq boxes} such that $y^* \in \C^*$ and let $\linearized^*$ be the corresponding affine function.
We have
\begin{align}
\begin{split}
\label{eq claim good}
\objS(y^\diamond) 
& = \sum_{i=1}^k d_i (y_i^\diamond)^2 + \sum_{i=k+1}^n d_i (y_i^\diamond)^2 + h^\transp y^\diamond \\
& \le \linearized^\diamond(y_1^\diamond,\dots,y_k^\diamond) 
+ \sum_{i=k+1}^n d_i (y_i^\diamond)^2 + h^\transp y^\diamond 
+ \gamma_{\mathrm{neg}} k r^2 / \varphi^2 \\
& \le \linearized^*(y_1^*,\dots,y_k^*) 
+ \sum_{i=k+1}^n d_i (y_i^*)^2 + h^\transp y^*
+ \gamma_{\mathrm{neg}} k r^2 / \varphi^2 \\
& \le \objS(y^*) + \gamma_{\mathrm{neg}} k r^2 / \varphi^2.
\end{split}
\end{align}
The first inequality follows by applying the right inequality in \eqref{eq claim sub} to $\C^\diamond$ and $y^\diamond$.
The second inequality holds by definition of~$y^\diamond$: its left-hand side is the optimal value of the MICQP on $\C^\diamond$, which is smallest among all boxes, whereas its right-hand side is the value of the objective function of the MICQP on $\C^*$ at $y^*$, which is feasible for that MICQP since $y^*$ is feasible for Problem~\ref{prob SMIQP} and $y^* \in \C^*$.
The third inequality follows by applying the left inequality in \eqref{eq claim sub} to $\C^*$ and $y^*$.

We are now ready to show that $y^\diamond$ is an $\epsilon$-approximate solution to Problem~\ref{prob SMIQP}.
We have 
\begin{align*}
\frac{\objS(y^\diamond) - \objSinf}{\objSsup - \objSinf}
& \le \frac{\cancel{\gamma_{\mathrm{neg}}} k r^2}{\varphi^2} \cdot \frac{1}{\kappa \cancel{\gamma_{\mathrm{neg}}}} 
= \frac{k r^2}{\kappa \varphi^2} 
\le \epsilon.
\end{align*}
In the first inequality we used \eqref{eq claim good} and the assumption $\objSsup - \objSinf \ge \kappa \gamma_{\mathrm{neg}}$, and the last inequality holds by definition of~$\varphi$.

\smallskip
\noindent
\textbf{Running time.}
The algorithm solves $\varphi^k = \ceilL{r\sqrt{k/(\kappa\epsilon)}}^k$ MICQPs, one per box. Since $r=n^2$ and $k$ is fixed, this number is polynomial in $n$ and in $1/\epsilon$. Each of these MICQPs has size polynomial in the size of the instance and in $\size(\epsilon)$: $\varphi$ and the box endpoints $\ell_i,u_i$ have size $O(\log n+\size(\epsilon))$, and $\varphi$ is computable in polynomial time, being the ceiling of the square root of a rational number of size $O(\log n+\size(\epsilon))$; the remaining coefficients of Problem~\eqref{prob MICQP on box} are obtained from these and from the original data by $O(1)$ arithmetic operations. Each such MICQP has $p$ integer variables, and, by Lemma~\ref{lem MICQP}, is solved in polynomial time since $p$ is fixed. Hence the entire algorithm runs on a Turing machine in time polynomial in the size of the instance, in $\size(\epsilon)$, and in $1/\epsilon$, provided that $p$ and $k$ are fixed.
\end{prf}

\subsection{Theorems of the alternative}
Our next goal is to present two theorems of the alternative, one for Problem~\ref{prob SMIQP} and one for Problem~\ref{prob MIQP}.
In these results, we show that there is an algorithm that either finds an $\epsilon$-approximate solution, or finds a direction along which the instance splits into a number of sub-instances with one fewer integer variable.
Let $\S \subseteq \R^n$ be a bounded closed set.
Given a vector $\dir \in \R^n$, we define the \emph{width of~$\S$ along $\dir$} to be
\begin{align*}
\width_\dir (\S) := \max \bra{ \dir^\transp x : x \in \S } - \min \bra{ \dir^\transp x : x \in \S }.
\end{align*}
Now let $b^1, \dots , b^n$ be linearly independent vectors in $\R^n$ and consider a nonzero vector $\dir \in \R^n$ with $\dir^\transp b^1, \dots, \dir^\transp b^p$ integer and $\dir^\transp b^{p+1} = \cdots = \dir^\transp b^n = 0$.
Then $\dir^\transp x$ is an integer for every $x \in \Lambda_p(b^1, \dots , b^n)$.
Since $\dir^\transp x$ takes values in an interval of length $\width_\dir(\S)$ as $x$ ranges over $\S$, and takes only integer values on $\S \cap \Lambda_p(b^1,\dots,b^n)$, it follows that the number of hyperplanes orthogonal to $\dir$ that contain points of $\S \cap \Lambda_p(b^1, \dots , b^n)$ is at most $\floor{\width_\dir(\S)}+1$.
We will use the following three lemmas, all of which have already been used, at least implicitly, in previous algorithms of this type.
The first is a flatness-type result: it either finds a mixed integer point in a ball, or a direction along which the ball is flat.
\begin{lemma}[Lemma~5.2 in \cite{dP25MPA}]
\label{lem Lenstra width}
Let $a,c \in \Q^n$, let $r \in \Q$ with $r \ge 0$, let $b^1, \dots , b^n$ be linearly independent vectors in $\Q^n$, and let $p \in \{0,\dots,n\}$.
There is an algorithm that either finds a vector in $\B(a,r) \cap (\Lambda_p(b^1, \dots , b^n) + \{c\})$, 
or finds a nonzero vector $\dir \in \Q^n$ with $\dir^\transp b^1, \dots, \dir^\transp b^p$ integer and $\dir^\transp b^{p+1} = \cdots = \dir^\transp b^n = 0$ such that $\width_\dir(\B(a,r)) \le p \constlen$.
The algorithm runs on a Turing machine in time polynomial in $\size(a)+\size(c)+\size(r)+\sum_{i=1}^n\size(b^i)$.
\end{lemma}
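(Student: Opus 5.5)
The plan is to reduce to the pure lattice case and then invoke Lenstra-type flatness through lattice basis reduction. First I would translate: replacing $a$ by $a-c$ reduces the question to whether $\B(a-c,r)\cap\Lambda_p(b^1,\dots,b^n)$ is nonempty, and widths of balls are translation invariant, so assume $c=0$. Next I would eliminate the continuous directions. Let $V:=\mathrm{span}(b^{p+1},\dots,b^n)$ and let $\pi$ be the orthogonal projection onto $V^\perp$, a $p$-dimensional space. Then a point of $\Lambda_p$ lies in $\B(a,r)$ if and only if some point of the (pure, full-rank in $V^\perp$) lattice $\Lambda':=\pi(\Lambda_0(b^1,\dots,b^p))$ lies in $\B(\pi(a),r)\cap V^\perp$. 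This holds because the ball projects onto the $p$-dimensional ball of the same radius centered at $\pi(a)$, and continuous combinations of $b^{p+1},\dots,b^n$ can move any point freely along $V$. The projections $\pi(b^i)$ are rational and computable by Gram–Schmidt/least squares in polynomial time, and from a lattice point found in the projection one recovers a point of $\Lambda_p$ in the ball by solving for the continuous coefficients (taking the nearest point of the affine fiber to $a$, which is rational).

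The core is the $p$-dimensional pure lattice problem: given a rational basis $\pi(b^1),\dots,\pi(b^p)$ of $\Lambda'$ and a ball of radius $r$ centered at $a':=\pi(a)$, either find a lattice point in the ball or a flat direction. I would run LLL on $\Lambda'$, obtaining a reduced basis $g^1,\dots,g^p$ with Gram–Schmidt vectors $g^{*1},\dots,g^{*p}$. Then apply Babai's nearest plane algorithm to $a'$, producing a lattice point $z$ with $\norm{z-a'}^2\le\frac14\sum_i\norm{g^{*i}}^2$. If $\norm{z-a'}\le r$ (decidable exactly by comparing squares of rationals), output the corresponding point. Otherwise $\sum_i\norm{g^{*i}}^2>4r^2$, so some $\norm{g^{*j}}>2r/\sqrt p$. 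LLL guarantees $\norm{g^{*p}}\ge 2^{-(p-1)/2}\norm{g^{*j}}$ up to the precise constant, so $\norm{g^{*p}}$ is at least roughly $2r/(\sqrt p\,2^{(p-1)/2})$. Take $\dir:=g^{*p}/\norm{g^{*p}}^2$, the last vector of the dual basis. It lies in $V^\perp$, so $\dir^\transp b^i=0$ for $i>p$. Moreover $\dir^\transp g^i=\delta_{ip}$, so $\dir^\transp b^i=\dir^\transp\pi(b^i)$ is an integer for $i\le p$, since the $\pi(b^i)$ are integer combinations of the $g^i$. Finally, $\width_\dir(\B(a,r))=2r\norm{\dir}=2r/\norm{g^{*p}}$, which is bounded by something like $\sqrt p\,2^{(p-1)/2}$.

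The main obstacle is the constant: the claimed bound is $p\,2^{p(p-1)/4}$, and matching it requires tracking the LLL inequalities carefully. A direct chain gives about $\sqrt p\,2^{(p-1)/2}$, which is already at most $p2^{p(p-1)/4}$ for $p\ge2$. For $p=1$ I would check separately that the bound reads width $\le1$, which holds since the nearest-point test fails only when $2r>\norm{g^{*1}}$... in fact it then gives width $<$ something that must be compared with $1$, so the case may need the rounding test done with the exact nearest integer. For $p=0$, the lattice is all of $\R^n$ and the projected ball is a point, so a point is always found. Running time is polynomial by LLL and exact rational Gram–Schmidt.
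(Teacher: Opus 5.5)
Your proposal is correct, but its core step takes a different route from the standard one behind this lemma. The paper gives no proof and simply cites \cite{dP25MPA}. The constant $p\,2^{p(p-1)/4}$ is exactly what Lenstra's original ball argument (as presented in Schrijver's book) produces, and it works as follows.
\begin{itemize}
\item After the same reduction to a rank-$p$ lattice $\Lambda'$, one takes a basis with orthogonality defect $\prod_i\|g^i\|\le 2^{p(p-1)/4}\det\Lambda'$, ordered so that $g^p$ is the longest vector.
\item One rounds the coefficients of $a'$ in this basis. This gives $z$ with $\|z-a'\|\le\frac12\sum_i\|g^i\|\le\frac p2\|g^p\|$.
\item If $z$ lies outside the ball, then $2r<p\|g^p\|$. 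Consecutive lattice hyperplanes parallel to $\mathrm{span}(g^1,\dots,g^{p-1})$ are at distance $\det\Lambda'/\det(g^1,\dots,g^{p-1})\ge 2^{-p(p-1)/4}\|g^p\|$ (Hadamard).
\item This bounds the width along the corresponding primitive dual vector by $p\,2^{p(p-1)/4}$.
\end{itemize}

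You instead use Babai's nearest plane together with the Lovász condition $\|g^{*(i+1)}\|^2\ge\frac12\|g^{*i}\|^2$, and you take the dual vector $g^{*p}/\|g^{*p}\|^2$.

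What your route buys is a sharper bound. You get $2r/\|g^{*p}\|<\sqrt p\,2^{(p-1)/2}$, and summing the geometric series even gives $<\sqrt{2^p-1}$. This is only singly exponential in $p$, and it is at most $p\,2^{p(p-1)/4}$ for every $p\ge1$.

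What it costs is that you need genuine LLL-reducedness (the Lovász condition on consecutive Gram--Schmidt norms), not merely the product bound that Lenstra's argument uses. The LLL algorithm delivers both properties, so nothing is lost. Your projection onto $V^\perp$, the recovery of a point of $\Lambda_p$ in the ball from the nearest point of the affine fiber, and the integrality check $\dir^\transp\pi(b^i)\in\Z$ via $\dir^\transp g^j=\delta_{jp}$ are all sound.

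Two small remarks:
\begin{itemize}
\item In the paper's notation, $\Lambda_0(b^1,\dots,b^p)$ is the real span of $b^1,\dots,b^p$. What you mean is the pure lattice $\Lambda_p(b^1,\dots,b^p)=\Z b^1+\cdots+\Z b^p$.
\item Your worry about $p=1$ is unfounded. In rank one, Babai's nearest plane is exact rounding, and your chain already gives width $<\sqrt1\cdot 2^0=1$, so no separate treatment is needed.
\end{itemize}
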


\begin{lemma}[Lemma~4 in \cite{dP23bMPA} with $d=n$]
\label{lem plus minus}
Let $f: \R^n \to \R$ be a quadratic function of the form $f(y) = y^\transp D y + h^\transp y$, where $D$ is diagonal, and the entry of $D$ with the largest absolute value is the $1$st diagonal entry $d_1$. Let $y^+, y^- \in \R^n$ be such that $y_1^+ - y_1^- \ge 1$ and $\sum_{i=2}^n (y_i^+-y_i^-)^2 \le 1/4$. Let $\underline f$ and $\overline f$ be the minimum and maximum values attained by $f$ on the three vectors $y^+$, $y^-$, $(y^+ + y^-)/2$. Then $\overline f - \underline f \ge \frac{3}{16}\abs{d_1}$.
\end{lemma}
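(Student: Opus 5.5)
The plan is to exploit the fact that the second difference of a quadratic function along a segment depends only on its Hessian. Let $v := y^+ - y^-$ and $m := (y^+ + y^-)/2$, so that $y^\pm = m \pm v/2$. Since $f(y) = y^\transp D y + h^\transp y$, the linear terms and the cross terms cancel in the symmetric second difference, and we should obtain the identity
$$
f(y^+) + f(y^-) - 2 f(m) = 2\pare{\tfrac{v}{2}}^\transp D \pare{\tfrac{v}{2}} = \tfrac12\, v^\transp D v .
$$
This is a routine expansion, which I will verify but not dwell on.

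Next I will lower-bound $\abs{v^\transp D v}$ using the diagonal structure of $D$. Write $v^\transp D v = d_1 v_1^2 + \sum_{i=2}^n d_i v_i^2$. Since $\abs{d_i} \le \abs{d_1}$ for all $i$, the hypothesis $\sum_{i\ge2} v_i^2 \le 1/4$ gives
$$
\absL{\sum_{i\ge 2} d_i v_i^2} \le \abs{d_1}/4 .
$$
Since also $v_1 \ge 1$, this is at most $\abs{d_1} v_1^2/4$. Hence $\abs{v^\transp D v} \ge \tfrac34 \abs{d_1} v_1^2 \ge \tfrac34 \abs{d_1}$, and therefore
$$
\abs{f(y^+) + f(y^-) - 2f(m)} \ge \tfrac38 \abs{d_1} .
$$
The only point needing care is that the sign of the perturbation sum is uncontrolled. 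The triangle inequality handles this regardless of the sign of $d_1$, which is the reason the absolute value is kept throughout.

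Finally, I will relate the second difference to the spread $\overline f - \underline f$. Each of $f(y^+) - f(m)$ and $f(y^-) - f(m)$ lies in $[-(\overline f - \underline f),\, \overline f - \underline f]$, since all three values lie in $[\underline f, \overline f]$. Consequently
$$
\abs{f(y^+) + f(y^-) - 2f(m)} \le 2(\overline f - \underline f).
$$
Combining this with the previous bound yields $\overline f - \underline f \ge \tfrac{3}{16}\abs{d_1}$, as claimed. I do not expect a genuine obstacle. The step most worth checking is the constant bookkeeping in the second-difference identity, namely the factor $\tfrac12$ that arises from evaluating at $\pm v/2$ rather than at $\pm v$, since this factor is what produces the $3/16$.
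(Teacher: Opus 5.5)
Your proof is correct: the second-difference identity $f(y^+)+f(y^-)-2f\bigl((y^++y^-)/2\bigr)=\frac12 v^\transp D v$, the bound $\abs{v^\transp D v}\ge\frac34\abs{d_1}$ (from $\abs{d_i}\le\abs{d_1}$ and $v_1\ge1$), and the bound of the second difference by $2(\overline f-\underline f)$ combine to give exactly $\frac12\cdot\frac34\cdot\frac12=\frac{3}{16}$. The paper does not reprove this lemma but imports it from \cite{dP23bMPA}, and your argument is essentially the standard one behind it.
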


\begin{lemma}
\label{lem midpoint}
Let $b^1, \dots , b^n$ be linearly independent vectors in $\R^n$, let $p \in \{0,\dots,n\}$, and let $c \in \R^n$.
Let $y^+, y^- \in \Lambda_p(2b^1, \dots , 2b^n) + \{c\}$.
Then
$$
\frac{y^+ + y^-}{2} \in \Lambda_p(b^1, \dots , b^n) + \{c\}.
$$
\end{lemma}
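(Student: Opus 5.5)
The plan is to write both points in coordinates with respect to the doubled lattice and then average. Since $y^+, y^- \in \Lambda_p(2b^1,\dots,2b^n) + \{c\}$, there exist coefficients $\mu^+_i, \mu^-_i$ such that
\[
y^\pm = c + \sum_{i=1}^n \mu^\pm_i (2b^i),
\]
where $\mu^\pm_i \in \Z$ for $i=1,\dots,p$ and $\mu^\pm_i \in \R$ for $i=p+1,\dots,n$.

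Averaging, the two copies of $c$ contribute $c$ once, and the factor $2$ in front of each $b^i$ cancels the factor $1/2$. We obtain
\[
\frac{y^+ + y^-}{2} = c + \sum_{i=1}^n (\mu^+_i + \mu^-_i)\, b^i .
\]
It then only remains to check the coefficients. For $i \le p$, the sum $\mu^+_i + \mu^-_i$ of two integers is an integer. For $i > p$, it is real. By the definition of $\Lambda_p(b^1,\dots,b^n)$, this places the midpoint in $\Lambda_p(b^1,\dots,b^n) + \{c\}$.

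There is no real obstacle here. The only point to verify is that the translation $c$ survives averaging unchanged, which holds because both points are translated by the same $c$. Linear independence of the $b^i$ is not even needed for the inclusion; it only ensures that the lattices are well defined as in the paper's setting.
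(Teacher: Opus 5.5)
Your proof is correct and follows essentially the same approach as the paper's. Both write $y^+$ and $y^-$ with coefficients relative to $2b^1,\dots,2b^n$, average to get coefficients $\mu_i^+ + \mu_i^-$ on $b^i$ with the translation $c$ surviving, and note that these coefficients are integers for $i \le p$.
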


\begin{prf}
Since $y^+, y^- \in \Lambda_p(2b^1, \dots , 2b^n) + \{c\}$, we can write
$$
y^+ = \sum_{i=1}^n \mu_i (2 b^i) + c, \qquad y^- = \sum_{i=1}^n \nu_i (2 b^i) + c,
$$
with $\mu_i, \nu_i \in \Z$ for $i=1,\dots,p$ and $\mu_i, \nu_i \in \R$ for $i=p+1,\dots,n$.
Averaging the two expressions, we obtain
$$
\frac{y^+ + y^-}{2} = \sum_{i=1}^n (\mu_i + \nu_i) b^i + c.
$$
Since $\mu_i + \nu_i \in \Z$ for $i=1,\dots,p$, the right-hand side is in $\Lambda_p(b^1, \dots , b^n) + \{c\}$.
\end{prf}

We are now ready to give our first theorem of the alternative.

\begin{proposition}[Theorem of the alternative for Problem~\ref{prob SMIQP}]
\label{prop alternative SMIQP}
Assume that, in Problem~\ref{prob SMIQP}, $D$ has at least one negative entry, and let $\epsilon\in(0,1]$ be rational.
There is an algorithm that either finds an $\epsilon$-approximate solution to Problem~\ref{prob SMIQP} and certifies that $\objSsup - \objSinf \ge 3 \gamma_{\max} /16$, where $\gamma_{\max}$ is the largest absolute value of an entry of $D$, or finds a nonzero vector $\dir \in \Q^n$  with $\dir^\transp b^1, \dots, \dir^\transp b^p$ integer and $\dir^\transp b^{p+1} = \cdots = \dir^\transp b^n = 0$ such that $\width_\dir(\B(0,n^{3/2})) \le 8 p \constlen n^{3/2}$.
The algorithm runs on a Turing machine in time polynomial in the size of the instance, in $\size(\epsilon)$, and in $1/\epsilon$, provided that $p$ and the number $k$ of negative entries of $D$ are fixed.
\end{proposition}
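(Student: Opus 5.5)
The plan is to follow the classical "flatness or spread" dichotomy, with the largest-magnitude diagonal entry of $D$ playing the role of the distinguished direction. After renaming variables, assume $|d_1| = \gamma_{\max}$. The idea is to find two feasible points $y^+,y^-$ lying in a doubled lattice, with $y^+_1 - y^-_1 \ge 1$ and the remaining coordinates nearly equal. Lemma~\ref{lem midpoint} then makes their midpoint feasible, and Lemma~\ref{lem plus minus} certifies $\objSsup-\objSinf \ge 3\gamma_{\max}/16$. Since $\gamma_{\max}\ge\gamma_{\mathrm{neg}}$, this certificate lets us call Proposition~\ref{prop SMIQP algorithm} with $\kappa := 3/16$, which returns an $\epsilon$-approximate solution. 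If the points cannot be found, the flatness lemma supplies the direction $\dir$.

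To locate the points, consider the two balls
$$
\B^{\pm} := \B\bigl(\pm\tfrac12 e^1,\ \tfrac14\bigr).
$$
Each lies inside $\B(0,1)$, since $\tfrac12+\tfrac14<1$, and hence inside $\{y: Wy\le w\}$ by spherical form. Any $y^+\in\B^+$ and $y^-\in\B^-$ satisfy $y^+_1-y^-_1 \ge 1-\tfrac12 = \tfrac12$. This is not yet enough for Lemma~\ref{lem plus minus}, so I would rescale. Take centres $\pm e^1$ and radius $1/8$, but then the balls leave $\B(0,1)$.

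The cleaner route is to scale the radius up instead: use the balls $\B(\pm\tfrac{n^{3/2}}{2}e^1,\ \tfrac{n^{3/2}}{8})$? These fail to lie in the polytope. So I would work inside $\B(0,1)$ and accept constants. Let $\B^\pm := \B(\pm\tfrac12 e^1, \tfrac18)$. For points in these balls, $y_1^+-y_1^-\ge \tfrac34$ and $\sum_{i\ge2}(y^+_i-y^-_i)^2 \le (1/4)^2$. Applying Lemma~\ref{lem plus minus} to $f(y/s)$ with $s=4/3$ then gives the spread bound with constant $3/16$ times $s^{-2}$. If this loses too much, I would adjust the radii so that exactly $3/16$ comes out. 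This constant bookkeeping is the step I expect to be fiddliest.

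I would apply Lemma~\ref{lem Lenstra width} to each of $\B^+$ and $\B^-$ with the lattice $\Lambda_p(2b^1,\dots,2b^n)+\{c\}$. If both calls return points $y^\pm$, these points are feasible, lie in the doubled lattice, and satisfy the separation conditions. Lemma~\ref{lem midpoint} shows that $(y^++y^-)/2$ is feasible as well, so Lemma~\ref{lem plus minus} yields the certificate, and Proposition~\ref{prop SMIQP algorithm} finishes this branch.

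Otherwise, some call returns a nonzero $\dir'$ with $\dir'^\transp(2b^i)\in\Z$ for $i\le p$, $\dir'^\transp b^i=0$ for $i>p$, and $\width_{\dir'}(\B^\pm)\le p\constlen$. Set $\dir:=2\dir'$. Then $\dir^\transp b^i\in\Z$ for $i\le p$, and $\dir^\transp b^i=0$ for $i>p$. Width scales linearly in both the vector and the radius, so with radius $1/8$ we get
$$
\width_{\dir}(\B(0,n^{3/2})) = 2\cdot 8n^{3/2}\,\width_{\dir'}(\B^\pm) \le 16p\constlen n^{3/2}.
$$
This exceeds the claimed $8p\constlen n^{3/2}$ by a factor of $2$. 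To match it exactly, I would use radius $1/4$ balls with centres $\pm\tfrac12 e^1$ (still inside $\B(0,1)$), re-check the separation hypotheses of Lemma~\ref{lem plus minus} by rescaling the objective, and pick the radius that reconciles both constants.

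The running time is polynomial in the stated quantities. The algorithm makes two calls to Lemma~\ref{lem Lenstra width} on data of polynomial size, at most one call to Proposition~\ref{prop SMIQP algorithm}, and does $O(1)$ arithmetic. The main obstacle is simultaneously meeting the exact constants $3/16$ and $8p\constlen n^{3/2}$ while keeping both balls inside $\B(0,1)$.
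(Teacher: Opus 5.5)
Your architecture matches the paper's. You put $\gamma_{\max}=\abs{d_1}$ in the first coordinate and call Lemma~\ref{lem Lenstra width} twice with the doubled lattice $\Lambda_p(2b^1,\dots,2b^n)+\{c\}$. In the flat case you set $\dir:=2\dir'$. Otherwise you combine Lemma~\ref{lem midpoint}, Lemma~\ref{lem plus minus} and Proposition~\ref{prop SMIQP algorithm} with $\kappa:=3/16$.

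\textbf{The gap.} The gap is the step you flag and leave open: you never exhibit balls that give both stated constants, and none of your candidates does.
\begin{enumerate}
\item With $\B(\pm\tfrac12 e^1,\tfrac18)$ the separation is only $\tfrac34$. Rescaling by $s=\tfrac43$ certifies only $\tfrac{3}{16}\cdot\tfrac{9}{16}\gamma_{\max}=\tfrac{27}{256}\gamma_{\max}$, not $\tfrac{3}{16}\gamma_{\max}$. The width bound becomes $16p\constlen n^{3/2}$, twice the claim.
\item Your final suggestion, $\B(\pm\tfrac12 e^1,\tfrac14)$, fails outright. The separation is only $\tfrac12$. Rescaling by $2$ to reach separation $1$ makes the transverse squared distance as large as $(2\cdot\tfrac12)^2=1>\tfrac14$, so Lemma~\ref{lem plus minus} no longer applies.
\end{enumerate}
These weaker constants would still be enough for Proposition~\ref{prop alternative MIQP} and Theorem~\ref{th MIQP algorithm}, after adjusting $\kappa$ and the number of hyperplanes. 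They do not, however, prove the proposition as stated.

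\textbf{The fix.} The small balls may be internally tangent to $\B(0,1)$. The paper uses $\B(\pm\tfrac34 e^1,\tfrac14)$, and each hypothesis checks out:
\begin{enumerate}
\item Containment: $\tfrac34+\tfrac14=1$, so both balls lie in $\B(0,1)$ and hence in the polytope.
\item Separation: $y^+_1\ge\tfrac12$ and $y^-_1\le-\tfrac12$, so $y^+_1-y^-_1\ge1$.
\item Transverse condition: the last $n-1$ coordinates of each point lie in an $(n-1)$-dimensional ball of radius $\tfrac14$, so $\sum_{i\ge2}(y^+_i-y^-_i)^2\le\tfrac14$.
\item Width: $\width_{\dir}(\B(0,n^{3/2}))=2\cdot4n^{3/2}\,\width_{\dir'}(\B(\pm\tfrac34e^1,\tfrac14))\le 8p\constlen n^{3/2}$.
\end{enumerate}
Among symmetric placements $\B(\pm c\,e^1,r)$ this choice is forced:
\begin{itemize}
\item the transverse hypothesis needs $r\le\tfrac14$;
\item the width constant $8$ needs $r\ge\tfrac14$;
\item containment needs $c+r\le1$;
\item separation needs $c-r\ge\tfrac12$.
\end{itemize}
Together these give $r=\tfrac14$ and $c=\tfrac34$. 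With these balls, the rest of your argument goes through unchanged.
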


\begin{prf}
In this proof, we denote by $d_i$ the $i$th diagonal entry of $D$.
After possibly renaming the variables, we assume without loss of generality that the entry of $D$ with the largest absolute value is $d_1$, so that $\gamma_{\max} = \abs{d_1}$.

We denote by $e^1$ the first vector of the standard basis of $\R^n$, and we apply Lemma~\ref{lem Lenstra width} to $\B(3 e^1/4, 1/4)$ and the mixed integer lattice $\Lambda_p(2b^1,\dots,2b^n) + \{c\}$, which is contained in $\Lambda_p(b^1,\dots,b^n) + \{c\}$ since $2\mu \in \Z$ whenever $\mu \in \Z$.
Consider first the case where Lemma~\ref{lem Lenstra width} finds a nonzero vector $\dir' \in \Q^n$ with ${\dir'}^\transp (2b^1), \dots, {\dir'}^\transp (2b^p)$ integer and ${\dir'}^\transp (2b^{p+1}) = \cdots = {\dir'}^\transp (2b^n) = 0$ such that $\width_{\dir'}(\B(3 e^1/4,1/4)) \le p \constlen$.
We then set $\dir := 2{\dir'} \in \Q^n$ and we have $\dir^\transp b^1, \dots, \dir^\transp b^p$ integer and $\dir^\transp b^{p+1} = \cdots = \dir^\transp b^n = 0$.
Furthermore, we have
\begin{align*}
\width_\dir(\B(0,n^{3/2}))
& = 2 \width_{\dir'}(\B(0,n^{3/2})) = 8n^{3/2} \width_{\dir'}(\B(0,1/4)) \\
& = 8n^{3/2} \width_{\dir'}(\B(3 e^1/4,1/4)) \le 8 p \constlen n^{3/2}.
\end{align*}
Hence the statement of the proposition holds. 
Therefore, we now assume that Lemma~\ref{lem Lenstra width} finds a vector
$$
y^+ \in \B(3 e^1/4,1/4) \cap (\Lambda_p(2b^1, \dots , 2b^n) + \{c\}).
$$
Clearly, we also have $y^+ \in \B(0,1)$, which is contained in the polyhedron $\{y \in \R^n : Wy \le w\}$ by the spherical form; since $y^+$ also lies in $\Lambda_p(b^1,\dots,b^n) + \{c\}$, as observed above, it is feasible to Problem~\ref{prob SMIQP}.

Next,  we apply Lemma~\ref{lem Lenstra width} to $\B(-3 e^1/4, 1/4)$ and the mixed integer lattice $\Lambda_p(2b^1,\dots,2b^n) + \{c\}$.
Symmetrically, we can assume that Lemma~\ref{lem Lenstra width} finds a vector
$$
y^- \in \B(-3 e^1/4,1/4) \cap (\Lambda_p(2b^1, \dots , 2b^n) + \{c\}),
$$
which in particular implies that $y^-$ is in $\B(0,1)$, and so it is feasible to Problem~\ref{prob SMIQP}.

To conclude the proof, it suffices to show that $\objSsup - \objSinf \ge 3 \gamma_{\max}/16$.
Indeed, we can then use Proposition~\ref{prop SMIQP algorithm} (with $\kappa := 3/16$, the constant of Lemma~\ref{lem plus minus}) to find an $\epsilon$-approximate solution, since $\gamma_{\max} \ge \gamma_{\mathrm{neg}}$, where $\gamma_{\mathrm{neg}}$ is the largest absolute value among the negative entries of $D$, as in the statement of Proposition~\ref{prop SMIQP algorithm}.

Since $y^+ \in \B(3 e^1/4,1/4)$ and $y^- \in \B(-3 e^1/4,1/4)$, we obtain
$y^+_1 - y^-_1 \ge 1.$
The vectors $(y^+_2,\dots,y^+_n)$ and $(y^-_2,\dots,y^-_n)$ are both contained in the $(n-1)$-dimensional ball $\B(0,1/4)$, thus their distance is at most $1/2$.
In other words,
\begin{align*}
\sum_{i=2}^n (y^+_i - y^-_i)^2 
\le 1/4.
\end{align*}
We define the midpoint of the segment joining $y^+$ and $y^-$ as $y^\circ :=  (y^+ + y^-)/2.$
Note that also the vector $y^\circ$ is feasible to Problem~\ref{prob SMIQP}.
Indeed, it satisfies the linear inequalities by convexity, and it is in $\Lambda_p(b^1, \dots , b^n) + \{c\}$ by Lemma~\ref{lem midpoint}, since $y^+, y^- \in \Lambda_p(2b^1, \dots , 2b^n) + \{c\}$.

By Lemma~\ref{lem plus minus} (with $f:=\objS$), the minimum and maximum values attained by $\objS$ on $y^+,y^-,y^\circ$ differ by at least $\frac3{16}\abs{d_1}=\frac3{16}\gamma_{\max}$.
Since all three vectors are feasible to Problem~\ref{prob SMIQP}, we conclude that 
$\objSsup - \objSinf \ge 3 \gamma_{\max} /16.$

\smallskip
\noindent
\textbf{Running time.}
Each application of Lemma~\ref{lem Lenstra width} runs in time polynomial in the size of the instance, since the balls $\B(\pm 3e^1/4,1/4)$ have constant size and $b^1,\dots,b^n,c$ are part of the input. In the partition case, forming $\dir=2\dir'$ takes $O(n)$ further arithmetic operations. Otherwise, forming $y^\circ$ takes $O(n)$ arithmetic operations, and, by Proposition~\ref{prop SMIQP algorithm}, finding the $\epsilon$-approximate solution takes time polynomial in the size of the instance, in $\size(\epsilon)$, and in $1/\epsilon$ when $p$ and $k$ are fixed. Hence the entire algorithm runs on a Turing machine in time polynomial in the size of the instance, in $\size(\epsilon)$, and in $1/\epsilon$, provided that $p$ and $k$ are fixed.
\end{prf}

Next, we present our theorem of the alternative for Problem~\ref{prob MIQP}.

\begin{proposition}[Theorem of the alternative for Problem~\ref{prob MIQP}]
\label{prop alternative MIQP}
Assume that, in Problem~\ref{prob MIQP}, the matrix $H$ has at least one negative eigenvalue and $\{x \in \R^n: Wx \le w\}$ is full-dimensional and bounded, and let $\epsilon \in (0,1]$ be rational.
There is an algorithm that either finds an $\epsilon$-approximate solution to Problem~\ref{prob MIQP}, or finds a nonzero vector $\dir \in \Z^n$ with $\dir_{p+1} = \cdots = \dir_n = 0$ and a scalar $\rho \in \Q$ such that 
$$
\bra{\dir^\transp x : Wx \le w} \subseteq \sbra{\rho, \rho + 8 p \constlen n^{3/2}}.
$$
The algorithm runs on a Turing machine in time polynomial in the size of the instance, in $\size(\epsilon)$, and in $1/\epsilon$, provided that $p$ and the number $k$ of negative eigenvalues of $H$ are fixed.
\end{proposition}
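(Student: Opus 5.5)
We plan to combine Proposition~\ref{prop reduction to SMIQP} with Proposition~\ref{prop alternative SMIQP}. First apply Proposition~\ref{prop reduction to SMIQP} with $\kappa := 3/16$ and the given $\epsilon$. Since $H$ has a negative eigenvalue, it is nonzero. We obtain $L$, $D$, $a$ such that $D$ has the same inertia as $H$; in particular $D$ has exactly $k$ negative entries, so $k$ stays fixed. Problem~\eqref{prob from MIQP to SMIQP} is then a spherical form MIQP with lattice basis $b^1,\dots,b^n$, the columns of $L^{-1}$, and its size is polynomial in the instance size and $\size(\epsilon)$.

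Next apply Proposition~\ref{prop alternative SMIQP} to Problem~\eqref{prob from MIQP to SMIQP} with tolerance $\epsilon/2$. There are two outcomes.

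\emph{First outcome.} We get an $\epsilon/2$-approximate solution $y^\diamond$ together with the certificate $f_{\sup}-f_{\inf}\ge 3\gamma_{\max}/16$. Since $\gamma_{\max}\ge\gamma_{\min}$, this gives $f_{\sup}-f_{\inf}\ge\kappa\gamma_{\min}$. Proposition~\ref{prop reduction to SMIQP}(iii) then shows that $x^\diamond := Ly^\diamond+a$ is an $\epsilon$-approximate solution to Problem~\ref{prob MIQP}.

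\emph{Second outcome.} We get a nonzero $\dir'\in\Q^n$ with ${\dir'}^\transp b^1,\dots,{\dir'}^\transp b^p$ integer, ${\dir'}^\transp b^{p+1}=\cdots={\dir'}^\transp b^n=0$, and
\[
\width_{\dir'}(\B(0,n^{3/2}))\le 8p\constlen n^{3/2}.
\]
We translate this back to $x$-space by setting $\dir := L^{-\transp}\dir'$, so that $\dir^\transp x = {\dir'}^\transp L^{-1}x$. The coordinates of $\dir$ are $\dir_i = \dir^\transp e^i = {\dir'}^\transp L^{-1}e^i = {\dir'}^\transp b^i$. Hence $\dir\in\Z^p\times\{0\}^{n-p}$, and $\dir\neq0$ because $L$ is nonsingular.

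For the width, write $y = L^{-1}(x-a)$. Every $x\in\P$ satisfies $y\in\B(0,n^{3/2})$ by the spherical form, and $\dir^\transp x = {\dir'}^\transp y + \dir^\transp a$. Therefore
\[
\{\dir^\transp x : Wx\le w\}\subseteq\bigl[\dir^\transp a - n^{3/2}\norm{\dir'},\ \dir^\transp a + n^{3/2}\norm{\dir'}\bigr],
\]
an interval of length $\width_{\dir'}(\B(0,n^{3/2})) = 2n^{3/2}\norm{\dir'}\le 8p\constlen n^{3/2}$.

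The main obstacle is that $\rho$ must be rational, while $\norm{\dir'}$ is generally irrational. We resolve this with an exact rational choice. Set $\rho := \min\{\dir^\transp x : Wx\le w\}$, computed by linear programming in polynomial time. It is rational and attained because $\P$ is a polytope. The whole projection lies in $[\rho,\rho+\width]$, where $\width$ is the width of $\P$ along $\dir$. This width is at most the width of the ellipsoid image, hence at most $8p\constlen n^{3/2}$.

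It remains to check the running time. Each step is polynomial: the reduction, the alternative with tolerance $\epsilon/2$ (polynomial also in $1/\epsilon$ for fixed $p$ and $k$), computing $L^{-\transp}\dir'$, and one LP.
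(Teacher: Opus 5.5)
Your proposal is correct and follows essentially the same route as the paper. You apply Proposition~\ref{prop reduction to SMIQP} with $\kappa = 3/16$, then Proposition~\ref{prop alternative SMIQP} at tolerance $\epsilon/2$, using $\gamma_{\max}\ge\gamma_{\min}$ in the first outcome; in the second outcome you set $\dir := L^{-\transp}\dir'$ (so that $\dir_j = {b^j}^\transp\dir'$) and take $\rho$ as the LP minimum of $\dir^\transp x$ over $\{x : Wx \le w\}$, and your explicit check that $H \neq 0$ (needed to invoke the reduction) is a detail the paper leaves implicit.
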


\begin{prf}
We employ Proposition~\ref{prop reduction to SMIQP} with $\kappa := 3/16$, and we find
a nonsingular matrix $L \in \Q^{n \times n}$, a diagonal matrix $D \in \Q^{n \times n}$, and a vector $a \in \Q^n$ such that:
(i)
$H$ and $D$ have the same inertia, thus the number of negative eigenvalues of $H$ coincides with the number of negative entries of $D$.
(ii) The following problem is a spherical form MIQP, where $b^1,\dots,b^n$ denote the columns of $L^{-1}$:
\begin{align}
\label{prob from MIQP to SMIQP inproof}
\begin{split}
\min & \quad y^\transp D y + (h^\transp L + 2 a^\transp HL) y \\
\st & \quad WLy \le w - Wa \\
& \quad y \in \Lambda_p(b^1,\dots,b^n) - \{L^{-1} a\}.
\end{split}
\end{align}
(iii)
If $y^\diamond$ is an $\epsilon/2$-approximate solution to Problem~\eqref{prob from MIQP to SMIQP inproof}
and $f_{\sup} - f_{\inf} \ge 3 \gamma_{\min}/16$ (where $f$ is the objective function of Problem~\eqref{prob from MIQP to SMIQP inproof}, $f_{\inf}$, $f_{\sup}$ are the infimum and supremum of $f$ on the feasible region of Problem~\eqref{prob from MIQP to SMIQP inproof}, and $\gamma_{\min}$ is the smallest absolute value among the nonzero entries of $D$), then $x^\diamond := L y^\diamond + a$ is an $\epsilon$-approximate solution to Problem~\ref{prob MIQP}.

We use Proposition~\ref{prop alternative SMIQP} (with $\epsilon := \epsilon/2$) and we either find an $\epsilon/2$-approximate solution $y^\diamond$ to Problem~\eqref{prob from MIQP to SMIQP inproof} and we know that $f_{\sup} - f_{\inf} \ge 3 \gamma_{\max} /16$, where $\gamma_{\max}$ is the largest absolute value of an entry of $D$, or we find a nonzero vector $\dir' \in \Q^n$  with ${\dir'}^\transp b^1, \dots, {\dir'}^\transp b^p$ integer and ${\dir'}^\transp b^{p+1} = \cdots = {\dir'}^\transp b^n = 0$ such that $\width_{\dir'}(\B(0,n^{3/2})) \le 8 p \constlen n^{3/2}$.

In the first case, since $\gamma_{\max} \ge \gamma_{\min}$, we have $f_{\sup} - f_{\inf} \ge 3 \gamma_{\min}/16$, and hence $x^\diamond := L y^\diamond + a$ is an $\epsilon$-approximate solution to Problem~\ref{prob MIQP} by (iii), and we are done.
Consider now the second case.
Let $\dir := {L^{-\transp}} {\dir'} \in \Q^n$.
Since $L$ is nonsingular, $\dir$ is nonzero.
For $j=1,\dots,n$, we have $\dir_j = {b^j}^\transp {\dir'}$.
Thus, ${\dir'}^\transp b^1, \dots, {\dir'}^\transp b^p$ integer implies $\dir_1, \cdots, \dir_p$ integer, and ${\dir'}^\transp b^{p+1} = \cdots = {\dir'}^\transp b^n = 0$ implies $\dir_{p+1} = \cdots = \dir_n = 0$.
Furthermore,
\begin{align*}
\width_{\dir} & \bra{x \in \R^n : Wx \le w}
 = \max \bra{ {\dir}^\transp x : Wx \le w } - \min \bra{ {\dir}^\transp x : Wx \le w } \\
& = \max \bra{ {\dir'}^\transp L^{-1} x : Wx \le w } - \min \bra{ {\dir'}^\transp L^{-1} x : Wx \le w } \\
& = \max \bra{ {\dir'}^\transp (y + L^{-1} a) : WLy \le w - Wa } - \min \bra{ {\dir'}^\transp (y + L^{-1} a) : WLy \le w - Wa } \\
& = \max \bra{ {\dir'}^\transp y : WLy \le w - Wa } - \min \bra{ {\dir'}^\transp y : WLy \le w - Wa } \\
& = \width_{\dir'} \bra{y \in \R^n : WLy \le w - Wa} \\
& \le \width_{\dir'} \B(0,n^{3/2}) \\
& \le 8 p \constlen n^{3/2}.
\end{align*}

Let $\rho := \min\{{\dir}^\transp x : Wx \le w\}$, and note that we can calculate it in polynomial time by solving a linear programming problem.
We obtain
\begin{align*}
\bra{\dir^\transp x : Wx \le w} \subseteq \sbra{\rho, \rho + 8 p \constlen n^{3/2}}.
\end{align*}
This concludes the second case.

\smallskip
\noindent
\textbf{Running time.}
By Proposition~\ref{prop reduction to SMIQP}, computing $L,D,a$ takes time polynomial in the size of the instance and in $\size(\epsilon)$; by (i), $D$ has $k$ negative entries, so Proposition~\ref{prop alternative SMIQP} (applied with $\epsilon/2$) runs in time polynomial in the size of the instance, in $\size(\epsilon)$, and in $1/\epsilon$ when $p$ and $k$ are fixed. In the first case, forming $x^\diamond=Ly^\diamond+a$ takes $O(n^2)$ further arithmetic operations; in the second, forming $\dir=L^{-\transp}\dir'$ takes $O(n^2)$ arithmetic operations and computing $\rho$ takes polynomial time via a linear program. Hence the entire algorithm runs on a Turing machine in time polynomial in the size of the instance, in $\size(\epsilon)$, and in $1/\epsilon$, provided that $p$ and $k$ are fixed.
\end{prf}

\subsection{Auxiliary results}

The proof of Theorem~\ref{th MIQP algorithm} relies on a recursive application of Proposition~\ref{prop alternative MIQP}.
Before turning to it, we state a few additional results that we will use.

\begin{lemma}[Theorem 4 in \cite{dPDeyMol17MPA}]
\label{lem NP}
Assume that Problem~\ref{prob MIQP} is feasible and the objective function is bounded below on the feasible region.
There is an optimal solution to Problem~\ref{prob MIQP} of size bounded by an integer $\psi$, which is polynomial in the size of the instance. 
\end{lemma}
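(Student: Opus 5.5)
The plan is the standard bounded-size-of-optimal-solutions argument in the style of Papadimitriou, Frieze and Kannan, adapted to the mixed integer setting. The proof combines three ingredients: a proximity/structure theorem for MIQP (finiteness of the optimum via Meyer-type results on convex polyhedral sets), Cramer's rule on a suitable linear system, and bounds on the size of integer points of polyhedra.

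\textbf{Step 1: reduce to a single polyhedron.} Since the objective is bounded below and the problem is feasible, an optimal solution exists. Fix the integer part $\bar z\in\Z^p$ of some optimal solution. For this fixed $\bar z$, the remaining continuous problem is a quadratic program over the polyhedron $\{y: W_2 y\le w-W_1\bar z\}$.

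\textbf{Step 2: bound the continuous part.} By the classical Karush--Kuhn--Tucker and face argument of Vavasis, there is an optimal $y$ that is the unique minimizer, within the affine hull of some face, of a quadratic with constraints tight on that face. Perturb to a point on that face where the KKT system restricted to that face has a solution. The KKT stationarity plus equality constraints form a linear system in $(y,\mu)$ with coefficients from $H,h,W$ and right-hand side affine in $\bar z$. Choosing a basic solution via Cramer's rule shows that $y$ is an affine function of $\bar z$, with coefficients of size polynomial in the instance.

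\textbf{Step 3: bound the integer part.} This is the main obstacle. Here I would follow the proof of Theorem~4 in \cite{dPDeyMol17MPA} directly. Partition the space of $z$ into polynomially describable regions on which the active face is fixed. On each region, the value function is a quadratic in $z$, so the problem reduces to a pure integer quadratic program over a polyhedron whose description has polynomial size. Then invoke the known result that a bounded-below integer quadratic program has an optimal solution of polynomial size, using the Hilbert basis/recession cone decomposition: an optimal $z$ equals a bounded point plus a nonnegative integer combination of recession generators. Along those generators the quadratic is nondecreasing, so the combination can be shrunk.

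Combining Steps 2 and 3 gives an optimal $(\bar z,y)$ whose size is bounded by a polynomial $\psi$. Since the statement is cited verbatim from \cite{dPDeyMol17MPA}, in the paper I would simply invoke that theorem rather than reprove it.
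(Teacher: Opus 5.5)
Citing Theorem~4 of \cite{dPDeyMol17MPA}, as your last sentence proposes, is exactly what the paper does. The paper gives no argument of its own, only the remark that an explicit formula for $\psi$ can be extracted from that proof; this matters because Initialization step~1 in the proof of Theorem~\ref{th MIQP algorithm} actually computes $\psi$. The self-contained sketch, however, has a genuine gap in Step~3. The claim that ``a bounded-below integer quadratic program has an optimal solution of polynomial size'' is precisely the case $p=n$ of the statement you are proving. After your reduction it carries all of the difficulty, so invoking it as known is circular. The justification you attach to it is also false. A quadratic $x^\transp Hx+h^\transp x$ bounded below on $\{v+\sum_j\lambda_jr^j:\lambda\in\Z^k,\ \lambda\ge0\}$ need not be nondecreasing along the generators, and optimal multipliers cannot be shrunk in general. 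Take $\min\{(z-N)^2: z\ge0,\ z\in\Z\}$ with $N$ a large input integer, $v=0$ and $r^1=1$: the objective strictly decreases along $r^1$ from $v$, and the unique optimal multiplier is $N$. Boundedness forces monotonicity only along generators with $(r^j)^\transp Hr^j=0$. There $(2Hx+h)^\transp r^j\ge0$ at every feasible $x$, and those multipliers can indeed be set to $0$. Along the other generators the objective is a convex parabola whose minimizer moves with the remaining multipliers, through cross terms $(r^i)^\transp Hr^j$ of either sign. So what must be proved is a bound of the form $\lambda_j\le2^{\mathrm{poly}}$ on some optimal multiplier vector, not that the multipliers can be shrunk. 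For indefinite $H$ this is the nontrivial argument your sketch omits.

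There are also three smaller issues.
\begin{itemize}
\item Your partition in Step~3 does not yield ``a pure integer quadratic program over a polyhedron''. For an indefinite continuous subproblem, the set of $z$ on which a given face carries the global optimum is cut out by comparisons $f(z,\phi_I(z))\le f(z,\phi_J(z))$, which are quadratic in $z$.
\item No partition is needed. Start from one optimal $(z^*,y^*)$ and apply the face argument, taking $y$ to be the \emph{unique stationary point} of $f(z^*,\cdot)$ on the affine hull of a face (not a minimizer there, which need not exist). This gives an index set $I$ and an affine map $\phi_I(z)=A_Iz+b_I$ of polynomial size, depending only on the data and $I$, with $(z^*,\phi_I(z^*))$ optimal. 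Then $z^*$ minimizes $f(z,\phi_I(z))$ over all $z\in\Z^p$ for which $(z,\phi_I(z))$ satisfies $Wx\le w$, since each such $z$ gives a feasible point with the same value. This is a valid reduction to the pure-integer case.
\item Existence of an optimum in Step~1 is not automatic in the mixed-integer case. It follows from Frank--Wolfe for each fixed $z$, together with the fact that the resulting optimal values lie among finitely many quadratics $f(z,\phi_I(z))$, which on integer $z$ take values in a fixed set $\frac1D\Z$.
\end{itemize}
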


We remark that, even though Theorem~4 in~\cite{dPDeyMol17MPA} does not give $\psi$ explicitly, a formula for $\psi$, as a function of the size of the instance, can be derived from its proof.

\begin{lemma}[Theorem 1.1 in \cite{dP25SIOPT}]
\label{lem full dim}
Let $W\in\Q^{m\times n}$, $w\in\Q^m$, $p\in\{0,1,\dots,n\}$, and consider the sets
$$
\P := \{x\in\R^n : Wx\le w\}, \qquad \S := \P\cap(\Z^p\times\R^{n-p}).
$$
There is a polynomial-time algorithm that either returns that $\S$ is empty, or finds $p'\in\{0,1,\dots,p\}$, $n'\in\{p',p'+1,\dots,p'+n-p\}$, $\bar x\in\Z^p\times\Q^{n-p}$, and $T\in\Q^{n\times n'}$ of full rank such that, defining $\tau:\R^{n'}\to\R^n$ by $\tau(x'):=\bar x+Tx'$, the polyhedron
$$
\P' := \{x'\in\R^{n'} : WTx'\le w-W\bar x\}
$$
is full-dimensional, and
$$
\P = \tau(\P'), \qquad \S = \tau\pare{\P'\cap(\Z^{p'}\times\R^{n'-p'})}.
$$
Furthermore, if an equality $\dir^\transp x=\beta$ with $\dir_{p+1}=\cdots=\dir_n=0$ is valid for $\P$, then $p'\le p-1$.
\end{lemma}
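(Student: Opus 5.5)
The plan is to reduce to the affine hull of $\P$ and then parametrize the mixed integer points of that affine subspace with a lattice basis computed by Hermite normal form. First, decide in polynomial time, by linear programming, whether $\P$ is empty; if so, return that $\S$ is empty. Otherwise compute the implicit equalities of $Wx\le w$: for each row $W_i$, solve $\min\{W_ix : Wx\le w\}$ and test whether the minimum equals $w_i$. The implicit rows give a system $Ax=b$ with $\mathrm{aff}(\P)=\{x:Ax=b\}$, and the remaining rows admit a common relative-interior point.

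Next, split $x=(z,y)$ with $z\in\R^p$ the integer variables and $y\in\R^{n-p}$ the continuous ones, so $A=[A_1\ A_2]$. The integer parts $z$ of points of $\mathrm{aff}(\P)$ with some real completion $y$ form an affine subspace $U=\{z: Fz=g\}$, where $F,g$ are obtained by projecting out $y$ with Gaussian elimination: take a basis of the left kernel of $A_2$. Using Hermite normal form, decide whether $U\cap\Z^p$ is empty, in which case $\S=\emptyset$. Otherwise compute $\bar z\in\Z^p$ and a basis $t^1,\dots,t^{p'}$ of the lattice $\{z\in\Z^p:Fz=0\}$; here $p'=\dim U\le p$. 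For each $z$, the continuous completions form an affine space $\{y: A_2y=b-A_1z\}$, which depends affinely on $z$. Choose a rational particular solution $y(z)=y_0+Rz$ and a basis $s^1,\dots,s^{q}$ of $\ker A_2$, with $q\le n-p$. Set $\bar x:=(\bar z,y(\bar z))$ and let $T$ have columns $(t^j,Rt^j)$ for $j\le p'$ and $(0,s^l)$ for $l\le q$. Then $n'=p'+q$, $T$ has full column rank, and $\tau$ maps $\Z^{p'}\times\R^{q}$ bijectively onto $\mathrm{aff}(\P)\cap(\Z^p\times\R^{n-p})$. This bijectivity gives $\S=\tau(\P'\cap(\Z^{p'}\times\R^{n'-p'}))$. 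Moreover, the image of $\tau$ is an affine subspace containing $\bar x$ and lying in $\mathrm{aff}(\P)$, and it has dimension $n'=\dim\mathrm{aff}(\P)$.

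The main obstacle is the dimension count, that is, showing the image of $\tau$ is all of $\mathrm{aff}(\P)$. For this, verify that $\dim\mathrm{aff}(\P)=\dim U+\dim\ker A_2$, and that the lattice vectors $t^j$ span the direction space of $U$; the latter holds because $F$ is rational and $U$ contains an integer point. Granting this, $\P=\tau(\P')$, and $\P'$ is full-dimensional because $\tau$ is an affine isomorphism from $\R^{n'}$ onto $\mathrm{aff}(\P)$ under which $\P$ corresponds to $\P'$. Polynomiality of all sizes follows from standard bounds on Gaussian elimination, on Hermite normal form, and on LP solutions.

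For the ``Furthermore'' clause, suppose $\dir^\transp x=\beta$ is valid for $\P$ with $\dir$ supported on the first $p$ coordinates, and write $\dir=(\dir_z,0)$. Then $\dir_z^\transp z=\beta$ holds on the projection of $\mathrm{aff}(\P)$ to the $z$-space, which is exactly $U$. Since $\dir_z\neq0$ (take $\dir$ nonzero, as is implicit), $U$ lies in a proper hyperplane of $\R^p$, so $p'=\dim U\le p-1$.
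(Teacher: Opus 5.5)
The paper does not prove this lemma; it imports it as Theorem~1.1 of \cite{dP25SIOPT}. There is therefore no in-paper argument to compare your route against. Judged on its own, your proposal is a correct, self-contained proof along standard lines:
implicit equalities give $\mathrm{aff}(\P)=\{x:Ax=b\}$; projecting out the continuous block gives the rational affine subspace $U$; Hermite normal form yields $\bar z$ and a basis of the full-rank lattice $\ker F\cap\Z^p$. Rank--nullity then settles the dimension count. The projection $(z,y)\mapsto z$ restricted to $\ker[A_1\ A_2]$ has image $\ker F$ and kernel $\{0\}\times\ker A_2$. Hence $\dim\mathrm{aff}(\P)=n'$, so $\tau$ is an affine isomorphism $\R^{n'}\to\mathrm{aff}(\P)$, and $\P'=\tau^{-1}(\P)$ is full-dimensional.

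Two points should be made explicit when you write it up. First, the affine selection $y(z)=y_0+Rz$ must solve $A_2y=b-A_1z$ for every $z\in U$; you can build it from a nonsingular $r\times r$ submatrix of $A_2$, with $r=\operatorname{rank}A_2$. This property is exactly what gives $A_1t^j+A_2Rt^j=0$, i.e., that the columns $(t^j,Rt^j)$ lie in the direction space of $\mathrm{aff}(\P)$. Second, the two identities rest on different facts:
\begin{itemize}
\item $\S=\tau(\P'\cap(\Z^{p'}\times\R^{n'-p'}))$ needs only $\P'=\tau^{-1}(\P)$ and the bijection $\Z^{p'}\times\R^{q}\to\mathrm{aff}(\P)\cap(\Z^p\times\R^{n-p})$;
\item $\P=\tau(\P')$ is where the dimension count is actually used.
\end{itemize}

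You are right that the ``Furthermore'' clause needs $\dir\neq0$: with $\dir=0$ and $\beta=0$ it fails. This is harmless in the paper's application. There $\dir$ is the nonzero vector from Proposition~\ref{prop alternative MIQP}, and $\dir^\transp x=\beta$ is added explicitly to the system. A nice feature of your construction is that it identifies $p'$ intrinsically as the dimension of the projection of $\mathrm{aff}(\P)$ onto the integer coordinates, which makes that clause immediate.
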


\begin{lemma}[Theorem 3 in \cite{dP23bMPA}]
\label{lem sym decomp}
Let $H \in \Q^{n \times n}$ be symmetric. There is an algorithm that computes a nonsingular matrix $L\in\Q^{n\times n}$ and a diagonal matrix $D\in\Q^{n\times n}$ such that $L^\transp HL=D$. The algorithm is strongly polynomial: it performs a number of arithmetic operations polynomial in $n$, and runs on a Turing machine in time polynomial in $\size(H)$.
\end{lemma}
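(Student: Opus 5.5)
We would prove this by a symmetric (congruence) version of Gaussian elimination with $1\times 1$ and $2\times 2$ pivots, in the spirit of Dax and Kaniel~\cite{DaxKan77}. The proof goes by induction on $n$, maintaining a nonsingular rational $L_t$ such that
$$
L_t^\transp H L_t=\begin{bmatrix} D_t & 0\\ 0 & S_t\end{bmatrix},
$$
with $D_t$ block diagonal, each block of size $1$ or $2$, and $S_t$ symmetric and still to be processed.

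\textbf{One step.} Consider the current $S:=S_t$.
\begin{enumerate}
\item If $S=0$, stop.
\item If some diagonal entry $s_{ii}\neq0$, permute it to position $1$. Apply the congruence by $\begin{bmatrix}1 & -s_1^\transp/s_{11}\\ 0 & I\end{bmatrix}$, where $s_1$ is the rest of the first column. This splits off the $1\times1$ block $s_{11}$ and leaves the Schur complement $S_{22}-s_1s_1^\transp/s_{11}$.
\item If every diagonal entry of $S$ is zero but $s_{ij}\neq 0$, permute $i,j$ to positions $1,2$. The pivot block $P=\begin{bmatrix}0&s_{ij}\\ s_{ij}&0\end{bmatrix}$ is nonsingular, so the congruence by $\begin{bmatrix}I & -P^{-1}S_{12}\\ 0 & I\end{bmatrix}$ splits off $P$ and leaves $S_{22}-S_{21}P^{-1}S_{12}$.
\end{enumerate}
At the end, each $2\times2$ block $P$ is diagonalized by the rational congruence $\begin{bmatrix}1&1\\1&-1\end{bmatrix}$, which gives $\mathrm{diag}(2s_{ij},-2s_{ij})$. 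Composing all these nonsingular rational matrices and the permutations gives $L$ with $L^\transp HL=D$ diagonal.

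\textbf{Operation count.} There are at most $n$ steps, each costing $O(n^2)$ arithmetic operations to update the Schur complement. Forming $L$ as a product costs $O(n^3)$ more. So the number of arithmetic operations is polynomial in $n$ alone.

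\textbf{Size bound (main obstacle).} What remains, and what we expect to be the main obstacle, is to show that all numbers produced have size polynomial in $\size(H)$, which gives the Turing-machine bound. Because only $1\times1$ and $2\times2$ principal pivots are used, and only permutations intervene, the current Schur complement after several steps equals the Schur complement of $\Pi^\transp H\Pi$ with respect to a leading principal submatrix $H_{11}$. Here $\Pi$ is a permutation, and $H_{11}$ is nonsingular because it is block triangularly congruent to the accumulated pivot blocks. Successive Schur complements compose into a single Schur complement (the quotient formula).

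By the determinant formula for Schur complements (as in Edmonds' analysis of Gaussian elimination), each entry of $S_t$ is a ratio of two minors of $\Pi^\transp H\Pi$: $\det$ of $H_{11}$ bordered by one row and one column, divided by $\det H_{11}$. After clearing denominators of $H$, Hadamard's inequality bounds these minors by integers of size polynomial in $\size(H)$.

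Similarly, the accumulated change of basis is block upper triangular, with off-diagonal part $-H_{11}^{-1}H_{12}$ (up to the permutation and the final $2\times2$ fixes). By Cramer's rule its entries are again ratios of minors of $H$.

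The delicate part is justifying the identification of the algorithm's actual iterates with these one-shot Schur complements, so that no size blow-up accumulates across steps. We would do this by induction on $t$ using the quotient formula $S/(H_{11})=(S/A)/(H_{11}/A)$ for nested leading blocks. The rest of the argument is then a direct application of Hadamard's bound.
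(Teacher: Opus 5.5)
Your argument is correct. The paper does not reprove this lemma; it imports it from~\cite{dP23bMPA}, where the symmetric decomposition algorithm of Dax and Kaniel~\cite{DaxKan77} is shown to be strongly polynomial. Your route is a close variant that differs only in how a vanishing diagonal is handled.

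As I understand their scheme, Dax and Kaniel keep every pivot $1\times1$. When all remaining diagonal entries are zero, a symmetric row/column addition first creates the nonzero pivot $2s_{ij}$, so $D$ comes out diagonal directly. You instead pivot on the block $\begin{bmatrix}0&s_{ij}\\ s_{ij}&0\end{bmatrix}$ and diagonalize it at the end with $\begin{bmatrix}1&1\\1&-1\end{bmatrix}$. This is the same trick applied afterwards, since the first column of that matrix adds one coordinate to the other.

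Your ordering buys a cleaner size analysis. Only permutations intervene, so every iterate is literally the Schur complement of $\Pi^\transp H\Pi$ with respect to a leading principal block. The ratio-of-minors identity and Hadamard's bound therefore apply to $H$ itself. With row/column additions interleaved, the iterates are instead Schur complements of a transformed matrix $N^\transp H N$, where $N$ accumulates permutations and additions, and the argument has to be run on that matrix.

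One point to tighten: $-H_{11}^{-1}H_{12}$ describes only the off-diagonal block of the accumulated change of basis. Its upper-left block $U$, with $U^\transp H_{11}U=D_t$, also needs a bound. This follows the same way: the entries of each column of $U$ lying above its diagonal block equal $-A^{-1}b$, where $A$ is the leading principal submatrix formed by the earlier pivot blocks and $b$ is the corresponding part of that column of $\Pi^\transp H\Pi$. Alternatively, bound $L$ directly as a product of at most $n$ step matrices whose entries have polynomial size.
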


\begin{lemma}[Lemma 7.3 in \cite{dP25MPA}]
\label{lem partition}
Let $\S^1,\dots,\S^t \subseteq \R^n$.
Let $f : \bigcup_{i=1}^t \S^i \to \R$ and assume that $f$ has a minimum on $\bigcup_{i=1}^t \S^i$.
Let $\epsilon \in [0,1]$.
For $i=1,\dots,t$, let $x^i$ be an $\epsilon$-approximate solution to the optimization problem $\inf\{f(x) : x \in \S^i\}$.
Then, each optimal solution to $\min\{f(x) : x \in \{x^1,\dots,x^t\}\}$ is an $\epsilon$-approximate solution to the optimization problem $\inf\{f(x) : x \in \bigcup_{i=1}^t \S^i\}$.
\end{lemma}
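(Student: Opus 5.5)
The plan is a direct comparison through the piece that contains a global minimizer. Let $f^* := \min\{f(x) : x \in \bigcup_{i=1}^t \S^i\}$, which exists by assumption, and let $f_{\sup} := \sup\{f(x) : x \in \bigcup_{i=1}^t \S^i\}$. For each $i$, write $f^i_{\inf}$ and $f^i_{\sup}$ for the infimum and supremum of $f$ on $\S^i$; each $\S^i$ is nonempty, since it contains $x^i$. Let $x^\circ$ be any optimal solution to $\min\{f(x) : x \in \{x^1,\dots,x^t\}\}$; such a point exists because the set is finite. Since $x^\circ$ is one of the $x^i$, it lies in the union and is therefore feasible. The goal is to show $f(x^\circ) - f^* \le \epsilon\,(f_{\sup} - f^*)$.

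First I dispose of the degenerate case. If $f_{\sup} = +\infty$, the right-hand side is $+\infty$ (for $\epsilon>0$) and the claim is trivial. For $\epsilon = 0$, the $x^i$ are optimal on their pieces and the argument below goes through verbatim. So I may assume $f_{\sup}$ is finite, and then every $f^i_{\sup} \le f_{\sup}$ is finite too. Every $f^i_{\inf} \ge f^*$ is finite as well.

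Next, pick an index $j$ such that some minimizer $x^*$ of $f$ over the union lies in $\S^j$. Since $\S^j$ is contained in the union, $f^j_{\inf} = f(x^*) = f^*$. By minimality of $x^\circ$ among the $x^i$, and since $x^j$ is an $\epsilon$-approximate solution on $\S^j$, I obtain the chain
$$
f(x^\circ) \le f(x^j) \le f^j_{\inf} + \epsilon\,(f^j_{\sup} - f^j_{\inf}) = f^* + \epsilon\,(f^j_{\sup} - f^*) \le f^* + \epsilon\,(f_{\sup} - f^*).
$$
The last step uses $f^j_{\sup} \le f_{\sup}$ and $\epsilon \ge 0$. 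This is exactly the required inequality.

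There is no real obstacle here. The only points needing care are, first, that one must choose the specific piece $\S^j$ containing a global minimizer, so that its infimum coincides with the global one. Second, the possible infinite suprema must be handled, which is done by the case distinction above.
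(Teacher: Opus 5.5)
Your proof is correct. Choosing the piece $\S^j$ that contains a global minimizer makes its infimum equal to the global minimum, while its supremum is bounded by the global supremum, which gives $f(x^\circ) \le f(x^j) \le f^* + \epsilon\,(f^j_{\sup} - f^*) \le f^* + \epsilon\,(f_{\sup} - f^*)$. Your handling of infinite suprema, including reading $\epsilon=0$ as optimality, is adequate. The paper gives no proof of this lemma; it imports it from Lemma~7.3 of the cited work, and your argument is the standard self-contained proof.
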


\subsection{Proof of Theorem~\ref{th MIQP algorithm}}

\begin{prf}
We now present our approximation algorithm for Problem~\ref{prob MIQP}.
The input to the algorithm is an instance of Problem~\ref{prob MIQP}, that is, specific data $H,h,W,w$, such that the objective function is bounded below on the feasible region.
We denote by $k$ the number of negative eigenvalues of the matrix $H$ of the input instance, that is, $k = k_-$ in the statement of Theorem~\ref{th MIQP algorithm}.

\smallskip
\noindent
\textbf{Initialization step 1: Bounded polyhedron.}
By Lemma~\ref{lem NP} (together with the remark following it), there is a formula, depending only on the size of the instance, for an integer $\psi$ such that: if the instance is feasible, then it has an optimal solution of size at most $\psi$.
We compute $\psi$ from this formula, regardless of whether the instance is feasible, and construct a new instance, obtained from the original one by adding the linear constraints $-2^\psi \le x_i \le 2^\psi$, for $i=1,\dots, n$.
The polyhedron corresponding to this new instance is 
$\bra{x \in \R^n : Wx \le w, \ -2^\psi \le x_i \le 2^\psi, \text{ for } i=1,\dots, n}$, which is bounded.
Furthermore, the size of the new instance is polynomial in the size of the former.
If the original instance is infeasible, then the new instance remains infeasible, since its feasible region is contained in the one of the original instance.
If the original instance is feasible, then, since some optimal solution has size at most $\psi$ and hence satisfies the added inequalities, the feasible region remains nonempty and the infimum of the objective function is unchanged; since the supremum can only decrease, an $\epsilon$-approximate solution to the new instance is also an $\epsilon$-approximate solution to the original one.
Therefore, we may replace the original instance by this new instance, whose corresponding polyhedron is bounded, without affecting feasibility, and so that finding an $\epsilon$-approximate solution to the new instance (or detecting its infeasibility) suffices.
For notational simplicity, we now use $H,h,W,w$ to denote the data of the new instance, so we now assume that the system $Wx \le w$ contains the linear inequalities $-2^\psi \le x_i \le 2^\psi$, for $i=1,\dots, n$.

\smallskip
\noindent
\textbf{Initialization step 2: Full dimensional polyhedron.}
We now apply Lemma~\ref{lem full dim} to the feasible region of the instance.
If we find out that the feasible region is empty, then the instance is infeasible, and we are done.
Otherwise, we obtain $p' \in \bra{0,1,\dots,p}$, $n' \in \bra{p',p'+1,\dots,p'+n-p}$, 
$\bar x \in \Z^p \times \Q^{n-p}$, and $T \in \Q^{n \times n'}$ of full rank, as in the statement of Lemma~\ref{lem full dim}.
We perform the change of variables $x = \bar x + T x'$ to the instance and, after dropping the constant $\bar x^\transp H \bar x + h^\transp \bar x$ in the objective function, we obtain the instance
\begin{align}
\label{prob MIQP fulldim}
\begin{split}
\min & \quad {x'}^\transp H' x' + {h'}^\transp x' \\
\st & \quad W'x' \le w' \\
& \quad x' \in \Z^{p'} \times \R^{n'-p'},
\end{split}
\end{align} 
where $H' := T^\transp H T$,
${h'}^\transp := h^\transp T + 2 \bar x^\transp HT$,
$W' := WT$, 
and $w' := w - W \bar x$.
By Lemma~\ref{lem full dim}, the polyhedron $\bra{x' \in \R^{n'} : W'x' \le w'}$ is full-dimensional; furthermore, since $\bra{x \in \R^n : Wx \le w}$ is bounded and $\tau(x'):=\bar x+Tx'$ maps this polyhedron bijectively onto $\bra{x \in \R^n : Wx \le w}$, the polyhedron $\bra{x' \in \R^{n'} : W'x' \le w'}$ is bounded as well.
The number of negative eigenvalues of $H'$ is at most the number of negative eigenvalues of $H$.
Indeed, for a symmetric matrix, the number of negative eigenvalues equals the largest dimension of a subspace on which the associated quadratic form is negative definite; and if ${x'}^\transp H' x'$ is negative definite on a subspace $V' \subseteq \R^{n'}$, then, since $T$ has full column rank, $x^\transp H x$ is negative definite on the subspace $T V' \subseteq \R^n$, which has the same dimension as $V'$.
Since the definition of $\epsilon$-approximate solution is preserved under changes of variables and translations of the objective function, and since the instance is feasible if and only if Instance~\eqref{prob MIQP fulldim} is, it then suffices to find an $\epsilon$-approximate solution to Instance~\eqref{prob MIQP fulldim}, or to detect that it is infeasible.

\smallskip
\noindent
\textbf{Recursive~steps.}
The remainder of the algorithm will be applied recursively to several instances of Problem~\ref{prob MIQP} whose corresponding polyhedron is full-dimensional and bounded.
In the first iteration, the instance considered is Instance~\eqref{prob MIQP fulldim}.
For notational simplicity, we assume that the instance considered has again data $H,h,W,w$.
Therefore, we assume that the polyhedron $\{x\in\R^n:Wx\le w\}$ is full-dimensional and bounded.

\smallskip
\noindent
\textbf{Recursive~step~1: Convex case.}
By Lemma~\ref{lem sym decomp}, there is a strongly polynomial algorithm that finds a nonsingular matrix $L\in\Q^{n\times n}$ and a diagonal matrix $D\in\Q^{n\times n}$ such that $L^\transp HL=D$; we use this only to check whether $H$ is positive semidefinite, and $L,D$ play no further role below. By Sylvester's law of inertia, $H$ and $D$ have the same inertia; in particular, $H$ is positive semidefinite if and only if every diagonal entry of $D$ is nonnegative, which can be checked in polynomial time.
If $H$ is positive semidefinite, then, since $\{x\in\R^n:Wx\le w\}$ is bounded, we apply Lemma~\ref{lem MICQP} to our instance.
If Lemma~\ref{lem MICQP} finds that the instance is infeasible, we are done with this instance.
Otherwise, Lemma~\ref{lem MICQP} finds an optimal solution $x^*$ to our instance in polynomial time, which is also an $\epsilon$-approximate solution.
Thus, in the remainder of the proof, we assume that $H$ is not positive semidefinite; in particular, since $H$ and $D$ have the same inertia, the matrix $H$ of the instance considered has at least one negative eigenvalue.

\smallskip
\noindent
\textbf{Recursive~step~2: Approximation or partition.}
We now apply the algorithm in Proposition~\ref{prop alternative MIQP}.
If the algorithm finds an $\epsilon$-approximate solution to our instance, then we are done with this instance.
Otherwise, the algorithm finds a nonzero vector $\dir \in \Z^n$ with $\dir_{p+1} = \cdots = \dir_n = 0$ and a scalar $\rho \in \Q$ such that 
$$
\bra{\dir^\transp x : Wx \le w} \subseteq \sbra{\rho, \rho + 8 p \constlen n^{3/2}}.
$$
Note that $p \ge 1$: indeed, $\dir$ is nonzero and satisfies $\dir_{p+1} = \cdots = \dir_n = 0$, which is impossible if $p=0$.
Thus the partition case can occur only when $p \ge 1$, so that each partition strictly decreases the number of integer variables; this is what guarantees that the recursion terminates.
Note also that $\dir^\transp x$ is an integer for every $x \in \Z^p \times \R^{n-p}$, since $\dir \in \Z^n$ and $\dir_{p+1} = \cdots = \dir_n = 0$.
Therefore, every feasible point of our instance is contained in one of the hyperplanes $\{x \in \R^n : {\dir}^\transp x = \beta\}$, where $\beta$ ranges over the integers with $\ceil{\rho} \le \beta \le \ceil{\rho + 8 p \constlen n^{3/2}}$.
For each such $\beta$, we define the instance
\begin{align}
\label{prob MIQP beta}
\begin{split}
\min & \quad x^\transp H x + h^\transp x \\
\st & \quad Wx \le w \\
& \quad \dir^\transp x = \beta \\
& \quad x \in \Z^p \times \R^{n-p}.
\end{split}
\end{align}
For each $\beta$, we apply Lemma~\ref{lem full dim} to the feasible region of Instance~\eqref{prob MIQP beta}, as in Step~2.
If we find out that the feasible region is empty, we do not need to consider further Instance~\eqref{prob MIQP beta} for such value of $\beta$.
Otherwise, we transform, with a change of variables, Instance~\eqref{prob MIQP beta} into a new instance defined over a full-dimensional polyhedron, which we denote by Instance~\eqref{prob MIQP beta}'.
This polyhedron is also bounded: the feasible region of Instance~\eqref{prob MIQP beta} is contained in the bounded set $\{x \in \R^n : Wx \le w\}$, and, by Lemma~\ref{lem full dim}, it is the image of the polyhedron of Instance~\eqref{prob MIQP beta}' under an injective affine map.
Since $\dir_{p+1} = \cdots = \dir_n = 0$, Instance~\eqref{prob MIQP beta}' has at most $p-1$ integer variables and at most $n-p$ continuous variables.
Moreover, exactly as in Initialization step~2, the number of negative eigenvalues of the Hessian of Instance~\eqref{prob MIQP beta}' is at most that of $H$.

\smallskip
\noindent
\textbf{Recursive~step~3: Recursion.}
We apply the recursive steps of the algorithm to each obtained Instance~\eqref{prob MIQP beta}'.
Since each partition in Recursive~step~2 decreases the number of integer variables by at least one, the recursion terminates.

\smallskip
\noindent
\textbf{Termination.}
Each $\epsilon$-approximate solution of a sub-instance found by the algorithm is mapped back to the corresponding feasible solution to the original instance by inverting the affine transformations performed by the algorithm (via Lemma~\ref{lem full dim}) to construct such sub-instance.
The algorithm then returns one of these feasible solutions with minimum objective value.
Lemma~\ref{lem partition} implies that the returned solution is an $\epsilon$-approximate solution to the original instance.
In case the algorithm never found any $\epsilon$-approximate solutions of any sub-instance, the algorithm returns that the original instance is infeasible.

\smallskip
\noindent
\textbf{Running time.}
The initialization steps run in polynomial time: computing $\psi$ and adding the box constraints (Initialization step~1), and applying Lemma~\ref{lem full dim} (Initialization step~2). For the recursion, since each partition decreases the number of integer variables by at least one, its depth is at most $p$; and since each partition creates at most $8p\constlen n^{3/2}+1$ sub-instances, the total number of iterations is at most $(p+1)\pare{8p\constlen n^{3/2}+1}^p$, which is polynomial in $n$ because $p$ is fixed. As this depth is the constant $p$, every instance considered has size polynomial in the size of the original instance, and, by Recursive step~2, its number of negative eigenvalues never exceeds $k$. Each iteration checks positive semidefiniteness via Lemma~\ref{lem sym decomp}, and then either solves a MICQP via Lemma~\ref{lem MICQP} or invokes Proposition~\ref{prop alternative MIQP} and applies Lemma~\ref{lem full dim} to each sub-instance; when $p$ and $k$ are fixed, each of these runs in time polynomial in the size of the instance, in $\size(\epsilon)$, and in $1/\epsilon$. Hence the entire algorithm runs on a Turing machine in time polynomial in the size of the instance, in $\size(\epsilon)$, and in $1/\epsilon$, provided that $p$ and $k$ are fixed.
\end{prf}

%
%
%
%

\bigskip

\begin{small}
\noindent
\textbf{Funding: }A. Del Pia is partially supported by AFOSR Grant FA9550-23-1-0433 and ONR Grant N00014-25-1-2490. Any opinions, findings, conclusions, or recommendations expressed in this material are those of the authors and do not necessarily reflect the views of the Air Force Office of Scientific Research or the Office of Naval Research.
\end{small}

\ifthenelse {\boolean{MPA}}
{
\bibliographystyle{spmpsci}
}
{
\bibliographystyle{plain}

}

\end{document}